\documentclass[11pt,reqno]{amsart}
\usepackage[margin=1in]{geometry}

\usepackage{hyperref}
\usepackage{amsthm}
\usepackage{amssymb}
\usepackage{amsxtra}
\usepackage{mathrsfs}
\usepackage{kopplagarias_clean} 

\usepackage[usenames,dvipsnames]{color}
\usepackage{bbm}
\usepackage{epsfig}
\usepackage{verbatim}
\usepackage{cleveref}
\usepackage{tikz-cd}
\usepackage{cite}
\usepackage{mleftright}
\usepackage{hhline}
\usepackage{autonum} 

\numberwithin{equation}{section}
\numberwithin{table}{section}
\numberwithin{figure}{section}

\theoremstyle{plain}
\newtheorem{theorem}{Theorem}[section]
\newtheorem{thm}[theorem]{Theorem}
\newtheorem{prop}[theorem]{Proposition}
\newtheorem{cor}[theorem]{Corollary}
\newtheorem{lem}[theorem]{Lemma}

\newtheorem{conj}[theorem]{Conjecture}

\newtheorem{empirp}[theorem]{Empirical Proposition}

\theoremstyle{definition}
\newtheorem{defn}[theorem]{Definition}

\newtheorem{egz}[theorem]{Example}

\theoremstyle{remark}
\newtheorem{rmk}[theorem]{Remark}

\AddToHook{env/prop/begin}{\crefalias{theorem}{prop}}
\AddToHook{env/cor/begin}{\crefalias{theorem}{cor}}
\AddToHook{env/lem/begin}{\crefalias{theorem}{lem}}
\AddToHook{env/que/begin}{\crefalias{theorem}{que}}
\AddToHook{env/conj/begin}{\crefalias{theorem}{conj}}
\AddToHook{env/assu/begin}{\crefalias{theorem}{assu}}
\AddToHook{env/empirp/begin}{\crefalias{theorem}{empirp}}
\AddToHook{env/defn/begin}{\crefalias{theorem}{defn}}
\AddToHook{env/nota/begin}{\crefalias{theorem}{nota}}
\AddToHook{env/cond/begin}{\crefalias{theorem}{cond}}
\AddToHook{env/egz/begin}{\crefalias{theorem}{egz}}
\AddToHook{env/intro/begin}{\crefalias{theorem}{intro}}
\AddToHook{env/rmk/begin}{\crefalias{theorem}{rmk}}

\begin{document}

\include{header}

\title[{SIC-POVM}s and real quadratic orders]{{SIC-POVM}s and orders of real quadratic fields}
\author{Gene S. Kopp}
\address{Department of Mathematics, Louisiana State University, Baton Rouge, LA, USA}
\email{kopp@math.lsu.edu}
\author{Jeffrey C. Lagarias}
\address{Department of Mathematics, University of Michigan, Ann Arbor, MI, USA}
\email{lagarias@umich.edu}

\keywords{SIC-POVM, complex equiangular lines, equiangular tight frame (ETF), real quadratic field, non-maximal order, ideal class group, explicit class field theory}

\date{May 13, 2026 (originally submitted December 27, 2024)}

\subjclass[2020]{11R37 (primary), 
11R29, 
11R65, 
81P15, 
81P18, 
81R05, 
42C15} 

\begin{abstract}
This paper concerns SIC-POVMs and their relationship to class field theory. SIC-POVMs are generalized quantum measurements (POVMs) described by $d^2$ equiangular complex lines through the origin in $\C^d$. Weyl--Heisenberg covariant SICs are those SIC-POVMs described by the orbit a single vector under a finite Weyl--Heisenberg group $\WH(d)$.

We relate known data on the structure and classification of Weyl--Heisenberg covariant SICs in low dimensions to arithmetic data attached to certain orders of real quadratic fields. For $4 \le d \leq 90$, we show the number of known geometric equivalence classes of Weyl--Heisenberg covariant SICs in dimension $d$ equals the cardinality of the ideal class monoid of (not necessarily invertible) ideal classes in the real quadratic order $\OO_{\Delta_d}$ of discriminant $\Delta_d=(d+1)(d-3)$; we conjecture the equality extends to all $d \ge 4$. We prove that this conjecture implies the existence of more than one geometric equivalence class of Weyl--Heisenberg covariant SICs for every $d>22$. We conjecture that Galois multiplets of SICs may be put in one-to-one correspondence with the over-orders $\OO'$ of $\OO_{\Delta_d}$ in such a way that the number of geometric classes in the multiplet equals the ring class number of $\OO'$. We test that conjecture against known data on exact SICs in low dimensions.

We refine the ``class field hypothesis'' of Appleby, Flammia, McConnell, and Yard \cite{appleby1} to predict the exact class field over $\Q(\sqrt{\Delta_d})$ generated by the ratios of vector entries for the equiangular lines defining a Weyl--Heisenberg SIC. The refined conjectures use a recently developed class field theory for orders of number fields \cite{kopplagarias}. The refined class fields assigned to over-orders $\OO'$ have a natural partial order under inclusion; the inclusions of these class fields fail to be strict in some cases. We characterize such cases and give a table of them for $d < 500$.
\end{abstract}

\maketitle

\section{Introduction}\label{sec:introduction}

SIC-POVMs are generalized quantum measurements (POVMs) described by $d^2$ equiangular complex lines through the origin in $\C^d$. Sets of $d^2$ equiangular complex lines in $\C^d$ are extremal objects in combinatorial design theory; they give {\em equiangular tight frames} \cite[Ch.~12--14]{waldron18}. The name \textit{SIC-POVM} (abbreviated `SIC') was introduced in a 2004 paper of Renes, Blume-Kohout, Scott, and Caves \cite{rbsc}, whose study was motivated by quantum state tomography, the problem of reconstructing a quantum state from a series of measurements performed on identical states. Scott showed that SICs are optimal measurement ensembles for quantum tomography \cite{scotttight}; see also \cite{grassltomography}. SICs have found further applications to quantum information processing \cite{detection,power,keydist,magic}, quantum foundations \cite{qbism}, compressed sensing for radar \cite{radar}, and classical phase retrieval \cite{phaseretrieval}. At present SICs have been constructed for finitely many dimensions $d$,
and conjecturally they exist for all $d$.

Twenty years of work on SICs by physicists since 2004 has uncovered strong connections of their structure with algebraic number theory (as described below and in \Cref{sec:12}). In particular, conjectures of Appleby, Flammia, McConnell, and Yard \cite{appleby2, appleby1} postulate the existence and number-theoretic structure of a certain class of SICs (Weyl--Heisenberg SICs, defined in \Cref{def:WH-SIC}), for all $d \ge 4$.

The objectives of this paper are:
\begin{enumerate} 
\item[(1)]
To extend and refine the conjectures on Weyl--Heisenberg SICs of \cite{appleby1}.
\item[(2)]
To test the extended conjectures against empirical data. 
\item[(3)]
To draw consequences of the extended conjectures counting and classifying geometric equivalence classes of Weyl--Heisenberg SICs. 
\end{enumerate} 

The conjectures of \cite{appleby1} relate Weyl--Heisenberg SICs in $\C^d$ for $d \geq 4$ to class field theory over real quadratic fields. The ratios of entries of each of the $d^2$ vectors in the currently known SICs in low dimensions were found to lie in a number field that is an abelian extension of the real quadratic field $K = \Q(\sqrt{\Delta_d})$ with $\Delta_d = (d+1)(d-3)$; see \Cref{sec:12}. Class field theory classifies the abelian extensions of a given number field $K$ in terms of ray class groups, which encode the arithmetic of ideals of the ring $\OO_K$ of algebraic integers of $K$.

The number fields associated to individual SICs in dimension $d$ may vary with the SIC. Appleby, Flammia, McConnell, and Yard \cite{appleby1} conjecture that the minimal number field that occurs for dimension $d$ SICs is a specific ray class field of $\Q(\sqrt{\Delta_d})$; see \Cref{conj:AFMY20}. The SICs with this associated number field comprise one or more \textit{Galois multiplets} (see \Cref{defn:gal-multiplet}), which are themselves finite sets of \textit{geometric equivalence classes} of Weyl--Heisenberg SICs (see \Cref{def:29}). The union of the classes in a multiplet form a single orbit of a Galois group action defined in \Cref{subsec:G-WHSIC}.

The extended conjectures made in this paper account for the remaining Galois multiplets of Weyl--Heisenberg SICs, those having larger associated number fields. The initial refinement asserts that, for each $d \ge 4$, the set of Galois multiplets can be put in one-to-one correspondence with a certain subset of the orders $\OO'$ of the real quadratic number field $K_d = \Q(\sqrt{\Delta_d})$---specifically, those orders $\OO'$ containing the order $\OO = \Z[\varepsilon_d]$ with $\varepsilon_d = \frac{1}{2}(d-1 + \sqrt{\Delta_d})$ and contained in the maximal order $\OO_{K_d}$ of all algebraic integers in $K_d$. Here an \textit{order} $\OO$ of $K_d$ is a subring of $K_d$ which contains $1$ and is of finite index in $\OO_K$. 

More precisely, the extended conjectures assert that a one-to-one correspondence exists in which the size of the multiplet (number of geometric equivalence classes) and the number field associated with the multiplet are specified by data defined purely in terms of $\OO'$. Namely:
\begin{enumerate}
\item
The number of geometric equivalence classes of SICs in the Galois multiplet is the ring class number of the order $\OO'$ (\Cref{conj:sic-to-order0}).
\item
The number field associated to every geometric equivalence class in the Galois multiplet is a specified ray class field of the order $\OO'$ (\Cref{conj:14}).
\end{enumerate}
 
The notion of ray class fields associated to orders is an extension of the usual class field theory (which is the special case of the maximal order), variants of which were developed (independently) by Campagna and Pengo \cite{CampagnaP22} and the authors \cite{kopplagarias}. Ray class fields associated to orders are discussed in \Cref{sec:81}. Inclusions of orders induce reverse inclusion of the corresponding ray class fields, so the maximal order corresponds to the minimal ray class field studied in \cite{appleby1}.

The remainder of the introduction describes SICs, their history, and conjectures about them; it also summarizes main results of the paper. 
Precise definitions and proofs are given later in the paper.

\subsection{SICs}\label{sec:11}

A set $S$ of complex lines through the origin in $\C^d$, written $S=\{\C \bv_1, \ldots, \C \bv_n \} \subset \Pj^{d-1}(\C)$ with $\bv_j$ unit vectors, is \textit{equiangular} if all the pairwise Hermitian inner products $\langle \bv_i, \bv_j \rangle$ for $i \neq j$ have the same absolute value $\abs{\langle \bv_i, \bv_j \rangle} = \alpha \ge 0$. In 1975, Delsarte, Goethals, and Seidel \cite{delsarte} proved an upper bound of $n \le d^2$ on the size of any set of complex equiangular lines in $\C^d$. They showed that if the bound $n=d^2$ is attained, then necessarily $\alpha = \frac{1}{\sqrt{d+1}}$. 

A \textit{symmetric informationally complete positive operator-valued measure} or \textit{SIC-POVM} (henceforth \textit{SIC}) is a set of normalized projections onto a (maximal) set of $d^2$ complex equiangular lines in $\C^d$. For $d^2$ equiangular lines $\{\C \bv_1, \ldots, \C \bv_{d^2}\}$ represented by unit column vectors $\bv_j \in \C^d$, the associated SIC is the set of normalized rank one Hermitian projection operators $\PPi(S) := \left\{\frac{1}{d} \Pi_1, \ldots, \frac{1}{d}\Pi_{d^2}\right\}$ where $\Pi_j = \bv_j \bv_j^\ct$ are rank one projection operators. (Here $\bv$ are column vectors, $\bv^\ct$ is the conjugate transpose of $\bv$, a row vector, and the Hermitian inner product used is $\langle \bv_i, \bv_j \rangle = \bv_i^{\ct} \bv_j$. Unit vectors have $\bv_i^{\ct} \bv_i=1$. Here $\Pi_j= \vert \bv_j \rangle \langle \bv_j \vert$ in Dirac notation.) The quantum information theory definition of a SIC-POVM uses the operators $E_i = \frac{1}{d}\Pi_i$ scaled by $\frac{1}{d}$ so that $\sum_i E_i = I$ (the identity matrix); see Renes, Blume-Kohout, Scott, and Caves ~\cite{rbsc} and Appleby \cite[eq.~(1)]{App05}. The $E_i$ give also an equiangular tight frame with frame constant $A=1$; see \cite[Sec.~14.3]{waldron18}.

The SIC condition is equivalent to the following $d^4$ identities on the traces of products of the projections:
\begin{align}\label{eqn:SIC-projection}
\Tr(\Pi_i \Pi_j) &=
\begin{cases}
1 & \mbox{if } i=j, \\
\frac{1}{d+1} & \mbox{if } i \ne j, 
\end{cases}
&&
\mbox{for $1 \le i, j \le d^2$.}
\end{align}

We will use the term ``SIC'' interchangeably to mean either a ``set of $d^2$ (scaled) Hermitian projection matrices $\PPi(S)$ forming a SIC-POVM'' or a ``set of $d^2$ complex equiangular lines $S$ in $\C^d$.'' When more specificity is needed, we term the latter a {\em line-SIC}. 

Zauner's seminal 1999 PhD thesis \cite{zauner,zaunertrans} on quantum designs established SICs as a research topic, inspiring much subsequent work. A few sporadic examples (such as the Hoggar lines in dimension $8$ \cite{hoggar1,hoggar2}) were known prior to Zauner's work. Zauner conjectured, based on his numerical investigations, that SICs exist in every dimension $d$, and moreover, that a SIC in $\C^d$ could be found as the orbit of a single vector $\bv$ under the Weyl--Heisenberg group \cite{zauner,zaunertrans}; the latter objects are called \textit{Weyl--Heisenberg covariant SICs} (or just \textit{Weyl--Heisenberg SICs}). A generating vector $\bv \in \C^d$ of such an orbit is called a \textit{fiducial vector}.

Weyl--Heisenberg SICs are defined and discussed in \Cref{sec:2} (\Cref{def:WH-SIC}). They are the main topic of this paper.

Over the past 25 years, a large set of SICs has been computed in particular dimensions---sometimes approximately and sometimes exactly---resulting in extensive datasets \cite{scott1,scott2,flammiaweb}. SICs have been rigorously constructed in a finite set of dimensions, including Weyl--Heisenberg SICs in all dimensions $1 \le d \le 53$ and the Hoggar lines for $d=8$. Beyond these exact constructions, numerical computations in dimensions including every $d \leq 90$ suggest there are only finitely many Weyl--Heisenberg SICs in each dimension $d \ne 3$. It is known that $d=3$ has infinitely many equivalence classes of Weyl--Heisenberg SICs, and we will not treat $d=3$ here. An extensive treatment of the history of SICs, prior to the developments in \Cref{sec:12}, is given by Fuchs, Hoang, and Stacy \cite{play}.

Weyl--Heisenberg SICs can be related to each other. There is a geometric action moving Weyl--Heisenberg SICs around by the normalizer of the Weyl--Heisenberg group inside the extended unitary group. Weyl--Heisenberg SICs $S$ may be grouped using this action into geometric equivalence classes of SICs $S$, denoted $[S]$, defined in \Cref{subsec:EUU}. This geometric action makes sense for ``numerical SICs,'' and the empirical count for ``numerical'' geometric classes $[S]$ for $d \le 90$ is shown in \Cref{table:classgroup-1CC} and \Cref{table:classgroup-2BB}.

In addition all known exact Weyl--Heisenberg SICs in dimensions $d \ne 3$ are constructed using algebraic numbers. It was discovered that Galois automorphisms sometimes take SICs to other SICs and sometimes do not; see \Cref{subsec:G-SIC}.

The process of analyzing these data has led to formulation of a series of interlocking hypotheses. The two most basic conjectures, which are both unsolved, are:
\begin{enumerate}
\item[(1)]
Weyl--Heisenberg SICs exist in all dimensions (conjectured by Zauner \cite{zauner});
\item[(2)] The set of Weyl--Heisenberg line-SICs in every fixed dimension $d \ne 3$ is a finite set.
\end{enumerate}
 
The finiteness assertion has been a ``folklore'' conjecture since about 2004. It was observed empirically under heuristics up to dimension $d=7$ by Renes, Blume-Kohout, Scott, and Caves \cite[Tab.~I and Sec.~V]{rbsc}, up to dimension $d=50$ by Scott and Grassl (who state they are confident their list is complete for $d \leq 50$ in \cite[Sec.~4]{scott1}), and up to $d=90$ by Fuchs, Hoang, and Stacy \cite[Sec.~6]{play} using data of Scott \cite{scott2}.  It was explicitly formulated as a conjecture in the first author's PhD thesis \cite[Conj.~V.12]{koppthesis}.

A third conjecture implicit in the literature (e.g., in \cite{appleby2}) is an ``algebraicity conjecture.''
\begin{enumerate}
\item[(3)] For $d \neq 3$, every Weyl--Heisenberg SIC has a fiducial vector whose entries are algebraic numbers. (Equivalently, For $d \neq 3$, every fiducial vector of a Weyl--Heisenberg SIC can be scaled so that all its entries are algebraic numbers.)
\end{enumerate}
In \Cref{sec:C1}, we prove that conjectures (2) and (3) are equivalent in every fixed dimension $d \ne 3$; see \Cref{prop:finalgequiv}. For further results on these conjectures, see \Cref{subsec:WHSIC}.

\subsection{Connection of SICs to algebraic number theory and class fields}\label{sec:12}

In all currently known examples of exact SICs for $d \ne 3$ the coordinates of fiducial vectors of Weyl--Heisenberg SICs can be chosen to have all entries algebraic numbers. 

Recent work has connected SICs to explicit class field theory over real quadratic number fields. Appleby, Yadsan-Appleby, and Zauner \cite{AYZ13} discovered in 2013 that in all (then) known examples (for $d \neq 3$), the algebraic number field $\E$ generated by the entries of the projections $\PPi(S)$ contains the quadratic field $K_d = \Q(\sqrt{(d+1)(d-3)})$, and Galois automorphisms often map SICs to new SICs. Moreover, they observed that (in all those cases) $\E$ is an \textit{abelian} extension of $K_d$.

In 2016, Appleby, Flammia, McConnell, and Yard \cite{appleby1,appleby2} made explicit predictions relating the number fields generated by Weyl--Heisenberg SICs to ray class fields. Based on extensive data given in \cite[Prop.~1]{appleby1}, they formulated the following conjecture \cite[Conj.~2]{appleby1}.

\begin{conj}[Appleby, Flammia, McConnell and Yard]\label{conj:AFMY20}
Let $d \geq 4$, $\Delta_d = (d+1)(d-3)$, and $K_d = \Q(\sqrt{\Delta_d})$.
\begin{enumerate}
\item At least one Weyl--Heisenberg line-SIC $S$ exists in dimension $d$ having SIC field $\fieldvec{S}$ equal to $H_{d'\infty_1\infty_2}$, the ray class field of $K_d$ having ray class modulus $d'\infty_1\infty_2$, where $d'=d$ if $d$ is odd, and $d'=2d$ if $d$ is even.
\item Every Weyl--Heisenberg line-SIC $S$ in dimension $d$ has SIC field $\fieldvec{S}$ that is a finite Galois extension of $\Q$ containing $H_{d'\infty_1\infty_2}$.
\end{enumerate}
\end{conj}

The technical definition of the SIC field $\fieldvec{S}$ appearing in \Cref{conj:AFMY20} is given in \Cref{subsec:NFS}. Ray class fields are discussed in \Cref{sec:81}.

Appleby, Flammia, McConnell, and Yard verified assertion (1) of \Cref{conj:AFMY20} for
\begin{equation}
d \in \{4, 5, 6, 7, 8, 9, 10, 11, 12, 13, 14, 15, 16, 17, 18, 19, 20, 21, 24, 28, 30, 35, 39, 48\}.
\end{equation}
They verified assertion (2) of the conjecture for all known Weyl--Heisenberg SICs in these dimensions. They also verified that the fields $\fieldvec{S}$ are all abelian over $K_d$ in the known cases. (The list of SICs used is believed to be complete in these dimensions.)

Subsequently, in 2018, the first author \cite{koppsic} proposed an explicit, conjectural construction of a SIC in prime dimensions $d \con 2 \Mod{3}$ for $d \ge 5$ in terms of Stark units and verified it in the cases $d \in \{5, 11, 17, 23\}$. (He suggested a connection to Stark units in every dimension). Recently Appleby, Bengtsson, Grassl, Harrison, and McConnell \cite{abghm} gave a (different, related) conjectural construction in prime dimensions of the form $d=p=n^2+3$, and Bengtsson, Grassl, and McConnell \cite{bgm} extended the latter to dimensions $d=4p=n^2+3$ with $p$ prime.

The origin of the discriminant $\Delta_d = (d+1)(d-3)$ appearing in the conjecture is not completely understood.

\subsection{Main results}\label{sec:13}

We extend and refine \Cref{conj:AFMY20} into two conjectures that count the number of geometric equivalence classes of such SICs in each Galois multiplet and that specify the number fields associated to all the Weyl--Heisenberg SICs in each dimension $d \ge 4$.

We recall necessary terminology. For a Weyl--Heisenberg line-SIC $S$,
\begin{enumerate}
\item[(1)] the \textit{geometric equivalence class} $[S]$ (also called the $\PEC(d)$-orbit) is the orbit of $S$ under the action of the \textit{projective extended Clifford group} $\PEC(d)$ (as given by \Cref{def:29}), and
\item[(2)] the \textit{Galois multiplet} $[[S]]$ is the set of all geometric equivalence classes $[\sigma(S)]$ that are reachable from $[S]$ under the action of Galois automorphisms $\sigma$ that map SICs to SICs (if $S$ is algebraic, as given by \Cref{defn:gal-multiplet}).
\end{enumerate}

To frame the conjectures, we must specify number fields attached to algebraic SICs. In \Cref{sec:fields,sec:algebraic}, we define several fields attached to a line-SIC $S = \{\C\bv_k : 1 \leq k \leq d^2\}$. These fields are finitely generated extensions of the rational numbers $\Q$. 
\begin{enumerate}
\item[(1)]
The {\em ratio SIC field} $\fieldvec{S}$ (called the ``SIC field'' in \cite{appleby1}) is the field generated by all the ratios $\frac{v_{ki}}{v_{kj}}$ of distinct entries ($i \ne j$, $v_{kj} \ne 0$) of the vectors $\bv_k$ with $\C \bv_k \in S$.
\item[(2)]
The {\em projection SIC field} $\fieldproj{S}$ is the field generated by the matrix entries of the Hermitian projection operators $\Pi_{i} = \bv_i \bv_i^\ct$ of the $d^2$ unit vectors $\{\bv_i: 1 \le i \le d^2\}$ defining $S$.
\item[(3)] The {\em extended projection SIC field} $\fieldeproj{S}=\fieldproj{S}\!(\xi_d)$ is obtained by adjoining the root of unity $\xi_d = -e^{\frac{\pi i}{d}}$. 
\item[(4)]
For algebraic SICs (ones with $\fieldvec{S}$ a finite extension of $\Q$), the {\em Galois projection SIC field} $\Nfieldproj{S}$ is the normal closure of $\fieldeproj{S}$. 
\end{enumerate}
We establish the following field inclusions, with the last being valid only for algebraic SICs:
\begin{equation}\label{eqn:field-inclusions}
\fieldvec{S} \subseteq \fieldproj{S} \subseteq \fieldeproj{S}\subseteq \Nfieldproj{S}.
\end{equation}
The field $\Nfieldproj{S}$ is an invariant of the Galois multiplet $[[S]]$ of an algebraic SIC $S$. In \Cref{lem:conj11-consequence}, we show that \Cref{conj:AFMY20} implies the equalities 
\begin{equation}\label{eqn:field-equalities}
\fieldvec{S} = \fieldproj{S} = \fieldeproj{S}= \Nfieldproj{S},
\end{equation}
whence $\fieldvec{S}$ is a finite Galois extension of $\Q$. 

The conjectures for Weyl--Heisenberg SICs in dimension $d \ge 4$ use the family of intermediate orders $\OO'$ of $K_d= \Q(\sqrt{\Delta_d})$ that lie between $\OO_{\Delta_d}= \Z[\varepsilon_d]$ and $\OO_{K_d}$. Given $\Delta_d$, we label each intermediate order $\OO'$ by its \textit{conductor} $f'$, a positive integer such that $\disc(\OO') = (f')^2\Delta_0$ where $\Delta_0 = \disc(\OO_{K_d})$. The conductors appearing are the positive divisors $f'$ of the conductor $f$ of $\OO_{\Delta_d}$. Quadratic orders are uniquely specified by their discriminants, so we may write $\OO'= \OO_{(f')^2 \Delta_0}$. (Further definitions and properties of orders, discriminants, and conductors are reviewed in \Cref{subsec:51}.)

\begin{conj}[Order-to-Multiplet Conjecture]\label{conj:sic-to-order0}
Fix a positive integer $d \ge 4$, let $\Delta = \Delta_d = (d+1)(d-3)$, and write $\Delta = f^2\Delta_0$ for $\Delta_0 = \disc(\OO_{K_d})$ and some positive integer $f$. Then there is a bijection $\MM$ between intermediate orders $\OO'=\OO_{(f')^2 \Delta_0}$, labeled by their conductors $f'$, 
and Galois multiplets, 
\begin{align}
\{\mbox{$f'$ a positive divisor of $f$}\} \xrightarrow{\MM} \{[[S]] : S \mbox{ a Weyl--Heisenberg line-SIC in } \C^d\},
\end{align}
having the following properties.
\begin{itemize}
\item[(1)] The number of geometric equivalence classes $[S]$ in the Galois multiplet $\MM(f')=[[S]]$ is the ring class number $\hcn{(f')^2 \Delta_0} := \abs{\Cl(\OO_{(f')^2\Delta_0})}$ of the the order $\OO'$ of discriminant $(f')^2\Delta_0$.
\item[(2)] If $f_1\div f_2$, $f_2 \div f$, $[S_1] \in \MM(f_1)$, and $[S_2] \in \MM(f_2)$, then $\fieldvec{S_1} \subseteq \fieldvec{S_2}$.
\end{itemize}
\end{conj}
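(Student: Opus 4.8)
\medskip

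\noindent Because \Cref{conj:sic-to-order0} presupposes both Zauner's existence conjecture and the finiteness of Weyl--Heisenberg SICs in each dimension, an unconditional proof is out of reach; the realistic plan is to (i) define the map $\MM$ intrinsically, (ii) prove that $\MM$ is injective and satisfies properties (1) and (2), conditional only on \Cref{conj:AFMY21} together with the refined class field hypothesis of this paper (that $\Nfieldproj{S}$ is the ray class field of $K_d$ attached to the associated order), and (iii) verify surjectivity of $\MM$ against the tabulated SIC data in the range $4 \le d \le 90$, leaving surjectivity in general as the genuinely open input.

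To define $\MM^{-1}$, I would start from a multiplet $[[S]]$ and pass to its Galois projection SIC-field $\Nfieldproj{S}$, which (as recalled above) depends only on $[[S]]$. Granting the refined hypothesis, $\Nfieldproj{S}$ is a specific ray class field of $K_d$ attached to an order $\OO'$ with $\Z[\e_d] \subseteq \OO' \subseteq \OO_{K_d}$; since such a quadratic order is determined by its conductor $f'$ and the conductors that occur are exactly the positive divisors of $f$, this assigns to $[[S]]$ a well-defined divisor $f' \div f$. By \Cref{lem:conj11-consequence}, under \Cref{conj:AFMY21} the chain $\fieldvec{S} \subseteq \fieldproj{S} \subseteq \fieldeproj{S} \subseteq \Nfieldproj{S}$ collapses, so $\OO'$ is equivalently read off from any of these four fields, and in particular from $\fieldvec{S}$.

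Property (2) is then essentially formal: if $f_1 \div f_2 \div f$, the corresponding orders satisfy $\OO_{(f_2)^2\Delta_0} \subseteq \OO_{(f_1)^2\Delta_0} \subseteq \OO_{K_d}$, so the reverse-inclusion theorem for ray class fields of orders from \cite{kopplagarias} gives the inclusion of the associated ray class fields, and the field identifications above yield $\fieldvec{S_1} \subseteq \fieldvec{S_2}$. For property (1) and the injectivity of $\MM$, the plan is to analyze the Galois action defining the multiplet: the subgroup of $\Gal(\Nfieldproj{S}/\Q)$ commuting with complex conjugation carries SICs to SICs with the same field (\Cref{lem:Galois-action-SICs}), and one expects the set of geometric equivalence classes inside $[[S]]$ to be a torsor under a quotient of the relevant ray class group isomorphic to the invertible class group $\Cl(\OO')$; transitivity of this action forces at most one multiplet per order (injectivity), and freeness gives the count $\abs{\Cl(\OO_{(f')^2\Delta_0})}$. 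As a consistency check I would verify that summing over $f' \div f$ reproduces the cardinality of the ideal class monoid of $\Z[\e_d]$, using the decomposition of that monoid into the pieces indexed by the multiplier ring of an ideal class, each piece being in natural bijection with $\Cl(\OO_{f'})$; this matches the counting theorem of the paper and the $4 \le d \le 90$ data.

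The main obstacle is surjectivity of $\MM$: one must exhibit, for each divisor $f'$ of $f$, an actual Weyl--Heisenberg line-SIC in $\C^d$ whose Galois projection SIC-field is the ray class field attached to $\OO_{(f')^2\Delta_0}$. Even the case $f'=f$ (the minimal multiplet, maximal order) is assertion (1) of \Cref{conj:AFMY21}, which is open for all but finitely many $d$; the proper divisors require in addition the existence of the ``larger-field'' multiplets, a strictly stronger statement. Absent a uniform construction---for instance via Stark units, as pursued in \cite{koppsic,abghm,bgm}---the only currently feasible route to surjectivity is dimension-by-dimension verification against the known SIC tables, which I would carry out for $4 \le d \le 90$; the general statement then remains conjectural, supported by the conditional theorems (1), (2), and injectivity above, by this numerical verification, and by the agreement with the ideal class monoid count.
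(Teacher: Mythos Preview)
The statement is a conjecture, and the paper does not prove it; it states it, derives consequences, and presents numerical evidence (\Cref{table:classgroup}, \Cref{table:classfield}). Your recognition that an unconditional proof is out of reach, and your plan to establish what can be made conditional while verifying the rest against the tables for $4 \le d \le 90$, matches the paper's posture.

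However, your proposed intrinsic definition of $\MM^{-1}$ --- read off the order $\OO'$ from the ray class field identification of $\Nfieldproj{S}$ --- has a genuine gap. The paper proves in \Cref{thm:orderdegen} and \Cref{cor:99} that whenever $\Delta_0 \equiv 1 \pmod{8}$ and $f' = 2f''$, one has $\tE_{d,f'} = \tE_{d,f''}$: two \emph{distinct} orders give the \emph{same} ray class field. This occurs first at $d=47$ and $d=67$, and Grassl's computations (reported in \Cref{subsec:85a}) confirm that in those dimensions there are two distinct multiplets with identical projection SIC-fields. So your map ``multiplet $\mapsto$ field $\mapsto$ order'' collapses two multiplets onto one value, and $\MM^{-1}$ is not well-defined from field data alone; injectivity of $\MM$ cannot be deduced this way either. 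The paper explicitly flags this ambiguity in \Cref{sec:52} and resolves it only by passing to the stronger \Cref{conj:class}, which pins down the bijection through compatibility with the Artin map rather than through the field. Your torsor argument for property~(1) is likewise not something the paper proves conditionally on \Cref{conj:AFMY21}; it is precisely the content of \Cref{conj:class}, which remains open.
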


The special case $f'=1$ corresponds to \Cref{conj:AFMY20}, where the order $\OO'$ is the maximal order. Appleby, Flammia, McConnell, and Yard \cite{appleby2} call the associated Galois multiplet $[[S]]$ the \textit{minimal multiplet}, as it has the smallest associated field $\fieldvec{S}$ under inclusion. The field inclusions asserted in \Cref{conj:AFMY20}(2) imply the truth of \Cref{conj:sic-to-order0}(2) in the special case $f_1=1$.

The following conjecture is an extension of both \Cref{conj:AFMY20} and \Cref{conj:sic-to-order0}, which uses the notion of ray class fields of an order $\OO'$ introduced by the authors in \cite{kopplagarias} and reviewed in \Cref{sec:81}.

\begin{conj}[Ray Class Fields of Orders Conjecture]\label{conj:14} 
Let $d \ge 4$. Then the map $\MM$ in \Cref{conj:sic-to-order0} may be chosen so that, for any Weyl--Heisenberg line-SIC $S$ in dimension $d$ with associated order $\OO'$ having discriminant $(f')^2\Delta_0$ (that is, with $\MM(f') = [[S]]$), one has 
\begin{equation}
\fieldvec{S} = \fieldproj{S} = H_{d'\OO', \{\infty_1, \infty_2\}}^{\OO'},
\end{equation}
where $H_{d'\OO', \{\infty_1,\infty_2\}}^{\OO'}$ is the ray class field of the order $\OO'$ having level datum $(\OO'; d'\OO', \{\infty_1,\infty_2\})$, $d'= 2d$ if $d$ is even, and $d'=d$ if $d$ is odd. 
\end{conj}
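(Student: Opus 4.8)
\Cref{conj:14} contains \Cref{conj:AFMY21} as its maximal‑order case, so it is far out of unconditional reach (it implies Zauner's existence conjecture in a strong form). What I describe is therefore a \emph{conditional} route: reduce the statement to \Cref{conj:AFMY21} together with the structural input of \Cref{conj:sic-to-order0}, and then verify the residual content directly in the dimensions where exact fiducial data is available. One preliminary simplification is useful: by \Cref{lem:conj11-consequence}, once \Cref{conj:AFMY21} is granted all four SIC‑fields coincide, so it suffices to identify any one of them — say $\fieldvec{S}$ — with the predicted field, and $\fieldvec{S}$ then depends only on the multiplet $[[S]]$.

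First I would pin down the predicted field intrinsically. Using the theory of ray class fields of orders from \cite{kopplagarias} (reviewed in \Cref{sec:81}), one identifies $\Gal\!\left(H^{\OO'}_{d'\OO',\{\infty_1,\infty_2\}}/K_d\right)$ with the ray class group of the level datum $(\OO';\, d'\OO',\, \{\infty_1,\infty_2\})$, records its conductor and its ramification at the primes above $d$ and at the two real places, and checks that for $f'=1$ (so $\OO'=\OO_{K_d}$) this recovers the ordinary ray class field $H_{d'\infty_1\infty_2}$ of $K_d$. With this in hand, the case $f'=1$ of \Cref{conj:14} is precisely \Cref{conj:AFMY21}(1)--(2) applied to the multiplet $\MM(1)$, which by \Cref{conj:sic-to-order0} is the minimal multiplet.

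Next comes the descent to the non‑maximal orders. The orders $\OO'$ with $\Z[\varepsilon_d]\subseteq\OO'\subseteq\OO_{K_d}$ form a lattice isomorphic to the divisor lattice of $f$, and inclusions of orders induce reverse inclusions of the associated ray class fields \cite{kopplagarias}; thus the predicted fields $H^{\OO'}_{d'\OO',\{\infty_1,\infty_2\}}$ form a lattice order‑isomorphic to the lattice $\{\fieldvec{S} : [[S]]=\MM(f')\}$ supplied by property~(2) of \Cref{conj:sic-to-order0}, with common minimal element $H_{d'\infty_1\infty_2}$ by \Cref{conj:AFMY21}. To upgrade this isomorphism of lattices to an equality of fields I would work inside class field theory over $K_d$: granting the class‑field hypothesis that $\fieldvec{S}$ is abelian over $K_d$, it is pinned down by its norm subgroup in a ray class group of $K_d$, and one shows that this subgroup coincides with the one cut out by $H^{\OO'}_{d'\OO',\{\infty_1,\infty_2\}}$ by matching degrees (via the ray‑class‑number formula for orders from \cite{kopplagarias}, cross‑checked against $\abs{\Cl(\OO_{(f')^2\Delta_0})}$ through property~(1) of \Cref{conj:sic-to-order0}) and matching ramification. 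Finally, for every $d$ with exact fiducials — all $4\le d\le 90$ in the known list, and the prime families of \cite{koppsic,abghm} — I would compute $\fieldvec{S}$ for each known $S$, determine its order $\OO'$ from the $\GL_2(\Z)$‑conjugacy class of trace $d-1$ attached to the multiplet, and confirm the predicted equality numerically, as was done for \Cref{conj:AFMY21}.

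The real obstacle is not this lattice bookkeeping but the complete absence of an unconditional reciprocity law connecting SIC fiducial data to ray class fields. Even the $f'=1$ case rests on \Cref{conj:AFMY21}, for which no proof mechanism is known; and for $f'>1$ one must additionally establish that the non‑maximal orders genuinely occur — that the larger multiplets really have fields strictly above $H_{d'\infty_1\infty_2}$ in exactly the ray‑class‑of‑an‑order pattern — which at present can only be checked case by case rather than derived. A genuine proof would presumably proceed by constructing fiducial vectors explicitly from Stark units, in the spirit of \cite{koppsic,abghm}, and proving the pertinent rank‑one abelian Stark conjecture over the real quadratic field $K_d$; both ingredients are wide open, and the Stark conjecture is the crux.
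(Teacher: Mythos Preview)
The statement is a \emph{conjecture}, and the paper offers no proof of it; it is presented as an extension of \Cref{conj:AFMY21} and supported only by numerical evidence (the degree comparisons in \Cref{table:classfield} for $4\le d\le 15$). You correctly recognize this and frame your write-up as a conditional strategy rather than a proof, and your closing paragraph accurately identifies the fundamental obstruction (no reciprocity law; ultimately a Stark-type input would be needed). In that sense your proposal is appropriate and matches the paper's stance.

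That said, two points in your conditional strategy deserve correction. First, the reduction you sketch does not actually go through: granting \Cref{conj:AFMY21} and \Cref{conj:sic-to-order0} gives you a lattice of SIC-fields sitting above $H_{d'\infty_1\infty_2}$ and order-isomorphic to the divisor lattice of $f$, but it does not identify the individual fields with the $H^{\OO'}_{d'\OO',\{\infty_1,\infty_2\}}$. Your proposed fix---``match degrees and ramification''---hides the real issue: \Cref{conj:sic-to-order0} says nothing about the ramification of $\fieldvec{S}$ beyond what is inherited from the minimal multiplet, and degree alone does not pin down a subfield of an abelian extension. So the step ``show the norm subgroup coincides'' is exactly where new content is required, not a bookkeeping matter. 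Second, a factual slip: exact fiducials are not known for all $4\le d\le 90$; the paper records exact constructions only up to $d\le 53$ (with complete exact lists only to $d\le 21$), while $d\le 90$ refers to numerical data. The paper's own verification of \Cref{conj:14}-type predictions (\Cref{table:classfield}) accordingly stops at $d=15$ and compares only field \emph{degrees}, not field identities.
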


The paper checks predictions of these conjectures against empirical data for SICs obtained in the datasets \cite{scott1, scott2, flammiaweb, play}. The empirical data include both exact Weyl--Heisenberg SICs given by algebraic fiducial vectors and numerical SICs given as rational vectors approximately satisfying the fiducial vector SIC relations (to many decimal places). The notion of \textit{geometric equivalence class} makes sense for numerical SICs, because they have well-defined $\PEC(d)$-orbits; however, the notion of \textit{Galois multiplet} is not well-defined for them. In consequence we cannot test \Cref{conj:sic-to-order0} using numerical SICs, since its statement involves Galois multiplets. Instead, we extract a numerical prediction implied by \Cref{conj:sic-to-order0} that is testable.

This numerical prediction says that the total number of geometric equivalence classes of Weyl--Heisenberg SICs in dimension $d \ge 4$ is $\abs{\Clm(\OO_{\Delta_d})}$, the cardinality of the {\em class monoid of the order $\OO_{\Delta_d}$}, as defined in \Cref{subsec:51}. We formulate this prediction as \Cref{conj:count}, and we test it in \Cref{prop:emp1} using both exact and numerical SICs. These tests are summarized by \Cref{table:classgroup-1CC} and \Cref{table:classgroup-2BB} for all dimensions $4 \le d \le 90$. \Cref{conj:count} gives agreement in all these dimensions, assuming that the list of exact plus numerical SICs is complete (but see \Cref{rmk:empirical}). (The number $\abs{\Clm(\OO_{\Delta_d})}$ is equal to a sum of the ring class numbers $h_{(f')^2 \Delta_0}$ appearing in \Cref{conj:sic-to-order0}; see \Cref{thm:equiv}.)

The existence of the bijection $\MM$ asserted in \Cref{conj:sic-to-order0} implies a numerical prediction concerning the number of Galois multiplets, predicting that it equals the number of quadratic orders between $\OO_{\Delta_d}$ and $\OO_{K_d}$, which is the number of over-orders of $\OO_{\Delta_d}$ in $K= \Q(\sqrt{\Delta_d})$. This numerical prediction is formulated as \Cref{conj:supercount} and tested in \Cref{prop:emp2} against certain dimensions $d$ in which all known SICs are exact SICs. Supporting data is given in \Cref{table:classgroup-1CC}.

In \Cref{sec:field}, we provide further tests of \Cref{conj:sic-to-order0}(2) and \Cref{conj:14}. We first summarize results on ray class fields of orders required to understand the conjectures. In \Cref{prop:66}, we numerically test the equality of degrees over $\Q$ of the ray class fields of orders in \Cref{conj:14} to those of the fields $\fieldvec{S}$. Tests are performed for exact SICs in dimensions $4 \le d \le 15$ and $d=35$, and a supporting table of fields is given in \Cref{table:classfield-1a}.

We display for $d=35$ a (unique) bijective map $\MM$ of orders to Galois multiplets for the known exact SICs that in \Cref{example:59} fulfills condition (1) of \Cref{conj:sic-to-order0}, and, in \Cref{example:67}, fulfills condition (2) of \Cref{conj:sic-to-order0}.

In \Cref{sec:class-monoids} we derive, via the extended conjectures, bounds on when there is exactly one Galois multiplet. That is, we determine all $d$ for which the class monoid $\abs{\Clm(\OO_{\Delta_d}}$ has one element. They all have $d \le 22$. We state a result from \cite{KL-UGO2} on the asymptotic growth of the size of the class monoid as $d \to \infty$.

In \Cref{sec:degeneration} we state results proved elsewhere \cite{KL-UGO2}, showing that \Cref{conj:14} strongly restricts the map $\MM$, often, but not always, making it unique. We state in \Cref{thm:99} that the set of $d$ for which there exist two distinct orders $\OO:=\OO', \OO''$ with identical $H_{d'\OO, \{\infty_1, \infty_2\}}^{\OO}$ in \Cref{conj:14} is the set of $d$ for which the squarefree part of $(d+1)(d-3)$ is $1 \Mod{8}$. Then \Cref{prop:density-18a} computes the asymptotic density of this set to be $\frac{1}{48}$. Using these results we obtain \Cref{table:degeneration}, which lists all cases where equality of two such ray class fields occurs for $d \le 500$. These include all cases of observed equality of SIC fields $d \in \{47,67,259\}$ made by Grassl \cite{grasslpersonal}.

\begin{rmk}\label{rmk:empirical}
Neither the empirical work on numerical Weyl--Heisenberg SICs nor the constructions of exact Weyl--Heisenberg SICs prove that the lists found are exhaustive. Our results labeled ``Empirical Proposition'' assert only that the current lists of (numerical or exact) SICs give equality with the conjectured numerical predictions in the dimensions $d$ covered.
\end{rmk}

\subsection{Relations to prior work}\label{subsec:14}

This paper formulates precise mathematical definitions of various number fields attached to SICs in Sections \ref{sec:fields} and \ref{sec:algebraic} in a more systematic way than earlier treatments. We prove unconditional inclusions among the various fields, at the current state of knowledge. (Conjecturally, many of these fields should coincide; see \eqref{eqn:field-equalities}.) 

The definitions of geometric equivalence classes and Galois multiplets of Weyl--Heisenberg SICs build on previous definitions given by Appleby, Flammia, McConnell, and Yard ~\cite{appleby2} and by Waldron \cite[Sec.~14.22]{waldron18}. The present definition of a Galois multiplet defines a potentially smaller set than some prior definitions of that term; see \Cref{rmk:413}.

The book of Waldron \cite{waldron18} defines Weyl--Heisenberg SICs using the (mathematics) inner product $\langle \bv, \bw\rangle = \bv^{T} \overline{\bw}$ (as opposed to the convention $\langle \bv, \bw\rangle = \ol{\bv}^{T} \bw$ adopted here). This definition gives the same set of SICs in dimension $d$ as this paper, because the Weyl--Heisenberg group (of matrices) is invariant under complex conjugation, and because if $\bv$ is a fiducial vector of a Weyl--Heisenberg SIC, then its complex conjugate vector $\overline{\bv}$ is the fiducial vector of a (possibly different) Weyl--Heisenberg SIC. These two complex conjugate SICs are in the same $\PEC(d)$-orbit.

We use data of Scott \cite{scott2} on (numerical) $\PEC(d)$-orbits of SICs up to $d=90$. Scott's data are also reported in the book of Waldron \cite[Tab.~14.4, pp.~386--388]{waldron18}. The elements of these orbits have stabilizers (internal symmetries) that Scott tabulates, which we do not treat in this paper. 

\subsection{Index of notation}\label{subsec:15}
\begin{itemize}
\item
Boldface roman upper case letters represent continuous groups, e.g., $\EU(d)$.
\item
The boldface roman lower case letters $\bv,\bw$ represent column vectors in $\C^d$, for example, $\bv = (v_{0}, v_{1}, \cdots v_{d-1})^T$. On the other hand, the boldface roman lower case letters $\mathbf{p},\mathbf{q}$ represent ordered pairs in $(\Z/d\Z)^2$.
\item 
Upper case roman letters represent $d \times d$ complex matrices (e.g., $X$, $Z$); they also appear in names of finite matrix groups (e.g., $\WH(d)$) and sets (e.g., $\WHSIC{d}$).
\item
For a complex matrix $M$ or vector $\bv$, the conjugate transpose is denoted $M^\ct := \ol{M}^\top$ or $\bv^\ct = \ol{\bv}^\top$. For a vector, the norm $\norm{\bv}^2 := \bv^{\ct}\bv = \sum_{i=0}^{d-1} v_i^2$.
\item
Matrices and vectors have rows and columns numbered from $0$ to $d-1$. We follow the text of Waldron \cite{waldron18}, which identifies indices with $\Z/d\Z$.
\item
Greek letters generally represent algebraic numbers (e.g., $\xi_d = -e^{\frac{\pi i}{d}}$ and $\zeta_d= e^{\frac{2 \pi i}{d}}$). 
\item
However, the letter $\sigma$ is used for Galois automorphisms, with $\sigma_c$ denoting complex conjugation. Galois group actions are treated either as acting on scalars or as acting entrywise on vectors or matrices. The notation $\sigma_0$ is also used for the number-of-divisors function.
\item
The notation $\Qbarline{\bv}$ is used for the $\ol{\Q}$-line associated to a vector (\Cref{defn:alg-vectors}).
\item
The notation $S$ is used for a ($d$-dimensional) line-SIC; it is a set of $d^2$ complex lines. 
\item
The notation $S(\bw)$ is used for a ($d$-dimensional) Weyl--Heisenberg line-SIC generated by the fiducial vector $\bw \in \C^d$. It is determined by the complex line $\C\bw$.
\item
The notation $[S]$ is used for a geometric equivalence class (or $\PEC(d)$-orbit) of SICs. 
\item
The notation $[[S]]$ is used for a Galois multiplet of Weyl--Heisenberg SICs (\Cref{defn:gal-multiplet}).
\item
The notation $d'$ means $d'= d$ if $d$ is odd and $d'=2d$ if $d$ is even. The number $\xi_d = -e^{\frac{\pi i}{d}}$ is a $d'$-th root of unity. 
\item 
We write $\Delta$ for a general quadratic discriminant with associated fundamental discriminant $\Delta_0$. Here $\Delta= f^2\Delta_0$ for some $f \ge 1$, called the conductor of $\Delta$. When $\Delta=\Delta_d$, we write $\Delta_0 = (\Delta_d)_0$ when needed for clarity, otherwise $\Delta_0$. (The quantity $\Delta_0$ will always mean a fundamental discriminant, never ``$\Delta_d$ for $d=0$.'')
\item
We write $\Delta_d = (d+1)(d-3)$ for the (often non-fundamental) discriminant associated to dimension $d$, $K_d = \Q(\sqrt{\Delta_d})$ for the associated field, and $\e_d = \foh(d-1+\sqrt{\Delta_d})$ for the associated unit.
\item 
We write $\OO_\Delta$ for the quadratic order of discrimiant $\Delta$, that is, $\OO_\Delta = \Z\left[\frac{\Delta+\sqrt{\Delta}}{2}\right]$.
\item 
We write $\sqfreepart(r)$ for the squarefree part of a positive integer $r$; that is, if $r=st^2$ with $s$ squarefree, then $\sqfreepart(r) = s$.
\item 
The Weyl--Heisenberg group $\WH(d)$ and projective Weyl--Heisenberg group $\PWH(d)$ are defined in \Cref{def:WH-group} and \Cref{def:PWH-group}.
\item 
The Clifford group $\CG(d)$, projective Clifford group $\PC(d)$, extended Clifford group $\EC(d)$, and projective extended Clifford group $\PEC(d)$ are defined in \Cref{def:EC-PEC}.
\item 
The notation $\WHSIC{d}$ for the set of Weyl--Heisenberg covariant SICs is introduced in \Cref{def:WH-SIC}.
\item 
The fields $\fieldvec{S}$, $\fieldtrip{S}$, $\fieldinv{S}$, $\fieldproj{S}$, and $\fieldeproj{S}$ associated to a SIC $S$ are defined in \Cref{defn:fields}. Another associated field $\Nfieldproj{S}$ is defined in \Cref{def:Galois-proj-field}.
\item 
The class group $\Cl(\OO)$ of an order $\OO$ is defined in \Cref{defn:classgroup}, and the class monoid $\Clm(\OO)$ is defined in \Cref{defn:classmonoid}. 
\item 
The ring class number of an order $\OO$ of a quadratic field is the cardinality $\abs{\Cl(\OO)}$ of its ideal class group, sometimes written $h_{\Delta}$, where $\Delta$ is the discriminant of $\OO$; see \Cref{cor:54}.
\item The ray class group $\Cl_{\mm,\rS}(\OO)$ and associated ray class field $H_{\mm,\rS}^{\OO}$ are introduced in \Cref{sec:81}.
\end{itemize}

\section{Weyl--Heisenberg SICs}\label{sec:2}

In this section, we define and give background on Weyl--Heisenberg (covariant) SICs, which satisfy a particular group symmetry property.

\subsection{Weyl--Heisenberg SICs}\label{subsec:WHSIC}

Specifically, a Weyl--Heisenberg line-SIC will be defined as the orbit of a single line under the action of the Weyl--Heisenberg group, a finite subgroup of the group of $d \times d$ unitary matrices.

\begin{defn}\label{def:WH-group}
The \textit{Weyl--Heisenberg group} is $\H(d) = \{\xi_d^r X^{p}Z^{q} : p,q,r \in \Z\}$, where $\xi_d= -e^{\frac{\pi i}{d}}$, $\zeta_d = \xi_d^2 = e^{\frac{2\pi i}{d}}$, and
\begin{align}\label{eqn:heis21}
X &= \begin{pmatrix}
0 & 0 & \cdots & 0 & 1 \\
1 & 0 & \cdots & 0 & 0 \\
0 & 1 & \cdots & 0 & 0 \\
\vdots & \vdots & \ddots & \vdots & \vdots \\
0 & 0 & \cdots & 1 & 0
\end{pmatrix}, &
Z &= \begin{pmatrix}
1 & 0 & 0 & \cdots & 0 \\
0 & \zeta_d & 0 & \cdots & 0 \\
0 & 0 & \zeta_d^2 & \cdots & 0 \\
\vdots & \vdots & \vdots & \ddots & \vdots \\
0 & 0 & 0 & \cdots & \zeta_d^{d-1}
\end{pmatrix}.
\end{align}
The matrix $X$ is called the {\em cyclic shift matrix}, and $Z$ is called the {\em modulation matrix} \cite[Sec.~14.5]{waldron18}.
\end{defn} 

It is easily checked that $ZX = \zeta_d XZ$. It follows that $\H(d)$ is a group. Its order is $d^3$ if $d$ is odd and $2d^3$ if $d$ is even, noting that $\xi_d$ is a $d$-th root of unity for odd $d$ and a $2d$-th root of unity for even $d$. 

\begin{defn}\label{def:PWH-group}
The \textit{projective Weyl--Heisenberg group} $\PWH(d)$ is defined to be the quotient group $\WH(d)$ modulo its center, the scalar matrix subgroup $Z(\WH(d)) = \{\xi_d^{j} I :\, j \ge 0 \}$, where $I$ is the $d \times d$ identity matrix.
\end{defn}

The group $\PWH(d) := \H(d)/\ZZ(\H(d)) \isom \left(\Z/d\Z\right)^2$. It has a well-defined action on complex lines $\C\bw$.

\begin{defn}\label{def:WH-orbit}
For $\bw \in \C^d$, $\bw \ne 0$, its \textit{Weyl--Heisenberg orbit} is
\begin{equation}
\sO_{\WH}(\bw) := \{H \bw : H \in \WH(d)\}. 
\end{equation}

Its associated set of \textit{Weyl--Heisenberg complex lines}, denoted $\WHL(\bw)$, is 
\begin{equation}
\WHL(\bw) := \{\C \bw: \, \bw \in \sO_{\WH}(\bw)\}= \{\C H \bw: H \in \PWH(d)\}.
\end{equation} 
For any $\bw$, $\WHL(\bw)$ has cardinality at most $d^2$, and for generic $\bw$, exactly $d^2$.
\end{defn}

\begin{defn}\label{def:WH-SIC}
A \textit{Weyl--Heisenberg covariant SIC} (or simply a \textit{Weyl--Heisenberg SIC}) is any set of Weyl--Heisenberg complex lines $S =S(\bv) :=\WHL(\bv)$ for which $\PPi(S)$ is a $d$-dimensional SIC. We denote the set of all Weyl--Heisenberg SICs in dimension $d$ as $\WHSIC{d}$.
\end{defn}

We call a generating vector $\bv$ of a Weyl--Heisenberg SIC a {\em fiducial vector}. The associated SIC $S(\bw) = S(\bv)$ whenever $\bw= \lambda \bv$ for $\lambda \in \C^{\times}$. Each SIC has a choice of $d^2$ fiducial vectors, up to scalars. 

\begin{conj}[Strong Zauner Conjecture]
\label{conj:strongZ}
For each dimension $d \ge 1$, there exists at least one Weyl--Heisenberg SIC.
\end{conj}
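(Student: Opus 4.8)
The statement is \Cref{conj:strongZ}, which is the (strong) Zauner conjecture; no unconditional proof is known, so any honest proposal must describe attack strategies rather than a finished argument. I see three avenues. The first is the dimension-by-dimension exact construction used in essentially all work on small $d$: one fixes an ansatz for a fiducial vector $\vv$ (typically one invariant under the order-three ``Zauner'' element of the Clifford group), reduces the SIC conditions $\Tr(\Pi_i\Pi_j)=\frac{1}{d+1}$ for $i\ne j$ to a finite polynomial system, and solves it over a number field. This has succeeded for all $1\le d\le 53$ and sporadically beyond, but the degrees and fields grow with $d$, so it cannot produce a uniform statement and cannot prove the conjecture as stated.

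The second avenue, the one native to this paper, is to build a fiducial from Stark units attached to the real quadratic field $K_d=\Q(\sqrt{\Delta_d})$, following \cite{koppsic} and \cite{abghm,bgm}. Conditionally on the relevant Stark conjectures, those constructions produce a concrete algebraic vector $\vv$; the plan would then be to prove that this $\vv$ actually satisfies the SIC equations by a structural computation --- expressing each $\Tr(\Pi_i\Pi_j)$ in terms of the special $L$-values underlying the Stark units and collapsing it using functional equations and class field reciprocity --- rather than by brute force. A third, more speculative avenue is to study the Clifford-invariant variety cut out by the SIC equations in $\Pj^{d-1}(\C)$ directly and show it is nonempty via some arithmetic or topological invariant; the counting predictions of this paper --- that the number of geometric equivalence classes equals the cardinality of the ideal class monoid of $\OO_{\Delta_d}$, which is always $\ge 1$ --- would be the natural target, but since that equality is itself conjectural it cannot ground a proof.

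Concretely, within the framework developed here I would pursue the second avenue: assume the Stark conjectures for $K_d$, promote the conjectural Stark-unit fiducial of \cite{koppsic,abghm} to an honest algebraic number, and try to verify the SIC equations for it structurally, using reciprocity to reduce $\Tr(\Pi_i\Pi_j)$ to a manageable closed form. The main obstacle is precisely this last step: there is at present no known mechanism forcing a Stark-unit construction to satisfy the \emph{quadratic} SIC equations --- this is exactly the content that \Cref{conj:14} and its refinements encode but do not establish. The realistic expectation is therefore that the conjecture remains open, and the contribution of the present paper is to pin down precisely which number field and how many SICs should occur in each dimension, which is a prerequisite for any eventual proof along these lines.
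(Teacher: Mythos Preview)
Your assessment is correct: \Cref{conj:strongZ} is stated in the paper as an open conjecture, not a theorem, and the paper offers no proof --- it only notes that the conjecture is known by explicit constructions for finitely many dimensions (including all $1\le d\le 53$). Your discussion of possible attack strategies (exact construction dimension-by-dimension, Stark-unit fiducials, arithmetic of the SIC variety) is a reasonable survey of the landscape and is consistent with what the paper itself says about the status of the problem; there is nothing to compare against, since the paper does not attempt a proof.
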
 

This conjecture is currently known by explicit constructions to hold for finitely many dimensions $d$, including $1 \le d \le 53$ and a extensive number of higher dimensions. 

\begin{conj}[Weyl--Heisenberg Boundedness Conjecture]
\label{conj:WHB}
For each dimension $d \ge 1$, except $d=3$, there are finitely many Weyl--Heisenberg SICs.
\end{conj}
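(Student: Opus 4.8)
The unconditional statement is a long-standing open problem (the ``folklore'' finiteness conjecture), so rather than attempt it outright I would deduce it from the structural \Cref{conj:sic-to-order0} (the Order-to-Multiplet Conjecture), which organizes the Weyl--Heisenberg SICs into finitely many multiplets, each a union of finitely many geometric equivalence classes. So I would assume \Cref{conj:sic-to-order0}, fix a dimension $d \neq 3$, and write $\Delta_d = f^2\Delta_0$ with $\Delta_0 = \disc(\OO_{K_d})$.

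First I would note that for line-SICs, being scalar-equivalent (sharing a fiducial vector up to scalar) is the same as being equal: a common fiducial $\vv$ of $\WHL(\vv_1)$ and $\WHL(\vv_2)$ generates both, so $\WHL(\vv_1) = \WHL(\vv) = \WHL(\vv_2)$ as sets of lines. Hence the assertion is exactly that $\#\WHSIC{d} < \infty$. Then I would run a three-step count. (i) By \Cref{conj:sic-to-order0} the Galois multiplets of Weyl--Heisenberg line-SICs in dimension $d$ are in bijection, via $\MM$, with the positive divisors of $f$, of which there are finitely many. (ii) By property~(1) of \Cref{conj:sic-to-order0}, each multiplet $\MM(f')$ consists of exactly $\abs{\Cl(\OO_{(f')^2\Delta_0})}$ geometric equivalence classes, and the class number of an order of a quadratic field is finite. (iii) Each geometric equivalence class $[S]$ is, by definition, a single orbit of the projective extended Clifford group $\PEC(d)$, and $\PEC(d)$ is a finite group, so $\abs{[S]} \le \abs{\PEC(d)}$. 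Multiplying through,
\[
\#\WHSIC{d} \;=\; \sum_{f' \mid f}\ \sum_{[S] \in \MM(f')} \abs{[S]} \;\le\; \#\{f' : f' \mid f\} \cdot \Big(\max_{f' \mid f} \abs{\Cl(\OO_{(f')^2\Delta_0})}\Big) \cdot \abs{\PEC(d)} \;<\; \infty ,
\]
which is the asserted finiteness. For $d = 1,2$ the field $K_d$ is imaginary quadratic, but class numbers of its orders are still finite so the count goes through unchanged (and these cases can in any event be settled by hand); the exclusion of $d = 3$ is exactly where $\Delta_3 = 0$, so that $K_3$, its orders, and the whole argument degenerate --- consistent with the known infinitude of Weyl--Heisenberg SICs in dimension $3$.

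The genuine obstacle is of course the \emph{unconditional} statement, since the deduction above merely transfers the difficulty into \Cref{conj:sic-to-order0}, which is at least as deep. An unconditional attack would have to engage directly with the fiducial variety $V_d \subseteq \Pj^{d-1}(\C)$ cut out by the real polynomial equations $\abs{\vv^\ct X^p Z^q \vv}^2 = \tfrac{1}{d+1}(\vv^\ct \vv)^2$ for $(p,q) \neq (0,0)$. Since $V_d$ is automatically compact, the conjecture is equivalent to the assertion that $V_d$ has real dimension $0$ for every $d \neq 3$. The system is overdetermined --- $d^2 - 1$ equations in real dimension $2d - 2$ --- which makes zero-dimensionality plausible for $d \ge 2$; but $d = 3$, where $V_3$ is a genuine positive-dimensional family, shows that no naive dimension count can close the argument, so one would have to exploit finer structure: the action of the (finite) Clifford group on $V_d$, the restriction to the fixed locus of a Zauner unitary (conjecturally without loss), or ultimately the conjectural arithmetic constraint that every irreducible component of a suitable complexification of $V_d$ is defined over a ray class field of an order of $K_d = \Q(\sqrt{\Delta_d})$ --- which is essentially the content of this paper's refined conjectures. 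For these reasons I would present only the conditional implication and flag ``$V_d$ has dimension $0$'' as the genuinely open point.
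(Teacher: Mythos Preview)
The statement is a \emph{conjecture} in the paper, not a theorem; the paper offers no proof, remarking only that it is known rigorously for $d \in \{1,2,4,5\}$ and is otherwise supported by numerical searches. So there is no paper proof to compare against.

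Your conditional deduction from \Cref{conj:sic-to-order0} is valid but more elaborate than necessary, and there is a hidden redundancy worth flagging. The notion of Galois multiplet appearing in \Cref{conj:sic-to-order0} is defined in the paper (\Cref{defn:gal-multiplet}) only under the standing assumption of \Cref{conj:Galois-fields1}, which already asserts that every Weyl--Heisenberg SIC in dimension $d \neq 3$ is projective-algebraic. The paper notes (immediately after \Cref{conj:WHAC}) that algebraicity is \emph{equivalent} to \Cref{conj:WHB}: the fiducial locus $\WHSIC{d}$ is a real algebraic variety, and a positive-dimensional real variety contains transcendental points, so if all points are algebraic the variety is zero-dimensional and hence finite. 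Thus merely taking \Cref{conj:sic-to-order0} to be well-posed already yields \Cref{conj:WHB} before any divisor/class-number/$\abs{\PEC(d)}$ bookkeeping. Your counting argument is correct and produces an explicit bound, but the shorter route is the one-line equivalence algebraicity $\Leftrightarrow$ finiteness. Your closing paragraph correctly identifies ``$\dim V_d = 0$'' as the genuine open problem --- which is exactly that equivalence restated.
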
 

There are a few small dimensions where this conjecture has been proved rigorously, including $d \in \{1,2,4,5,6\}$. The cases $d = 1,2$ may be done by hand; the cases $d=4,5,6$ have been handled by computer using Gr\"{o}bner bases \cite[unpublished]{grasslpersonal2}. In higher dimensions, up to at least $d=50$, it is supported by non-rigorous evidence given by numerical optimization routines repeatedly returning the same SICs \cite{scott1,scott2}; see \Cref{table:classgroup-1CC}. In dimension $3$, there is a one-parameter family of inequivalent Weyl--Heisenberg SICs.

\subsection{Geometric equivalence classes of Weyl--Heisenberg SICs}\label{subsec:EUU}

There are two types of geometric operators acting on $\C^d$ that send SICs to SICs. Firstly, if $U$ is a unitary matrix in $\U(d) = \{U \in \GL(\C^d) : UU^\ct = 1\}$, and $S = \{\C \bv_1,\ldots,\C \bv_{d^2}\}$ is a SIC, then so is $US = \{\C U\bv_1,\ldots,\C U\bv_{d^2}\}$. (This is a \textit{unitary equivalence}.) Secondly, if $C_d$ is the complex conjugation operator acting pointwise on $\C^d$, so that $C_d \bv := \ol{\bv}$, then $C_d$ also preserves the SIC property, as does any ``antiunitary'' operator of the form $C_d U$. These operators may be collected together to form the extended unitary group.

\begin{defn}\label{EU-PEU-group}
The \textit{extended unitary group} is $\EU(d) := \U(d) \sqcup C_d \U(d)$. The \textit{projective extended unitary group} $\PEU(d) := \EU(d)/\{zI : z \in \C, \abs{z}=1\}$.
\end{defn}

All known SICs are group covariant, meaning that they are the orbit of a vector under the action of a finite subgroup of the unitary group.

\begin{defn}\label{def:group-covariant}
A SIC $S$ is \textit{group covariant} for a subgroup $G$ of $\U(d)$ if there exists a vector $\bv \in \C^d$, called a \textit{fiducial vector}, such that $S = \{\C A\bv : A \in G\}$.
\end{defn}

If $\bv$ is a fiducial vector for a Weyl--Heisenberg covariant SIC and $U \in \EU(d)$, then the vector $U\bv$ will be a fiducial vector for a SIC covariant for the group $G'= U \WH(d) U^{-1}$. All currently known SICs are unitary-equivalent to a Weyl--Heisenberg group $\H(d)$ covariant SIC, with a single exception, the Hoggar lines in $\C^8$ \cite{hoggar1,hoggar2}. The Hoggar lines are group covariant for the tensor product $\H(2) \tensor \H(2) \tensor \H(2)$. (The group $\H(2) \tensor \H(2) \tensor \H(2)$ may itself be thought of as a generalized (Weyl)--Heisenberg group.) 

The extended unitaries $U \in \EU(d)$ that preserve the Weyl--Heisenberg covariance property are precisely those for which $U \WH(d) U^{-1} = \WH(d)$, that is, elements of the extended Clifford group, defined as follows.
\begin{defn}\label{def:EC-PEC}
We define several closely related groups.
\begin{itemize}
\item[(1)] 
The \textit{Clifford group} $\CG(d)$ is the normalizer of $\WH(d)$ in $\U(d)$.
\item[(2)]
The \textit{projective Clifford group} $\PC(d)$ is the quotient group of $\CG(d)$ modulo its scalar matrix subgroup.
\item[(3)]
The \textit{extended Clifford group} $\EC(d)$ is the normalizer of $\WH(d)$ in $\EU(d)$. 
\item[(4)]
The \textit{projective extended Clifford group} $\PEC(d)$ is the quotient of the extended Clifford group $\EC(d)$ modulo its scalar matrix subgroup.
\end{itemize}
\end{defn}

The Clifford group $\CG(d)$ is infinite because it contains a copy of $\U(1)$ as scalar matrices $zI$ with $\abs{z}=1$; thus, $\EC(d)$ is infinite. The groups $\PC(d)$ and $\PEC(d)$ are finite groups. (The extended Clifford $\EC(d)$ group is studied in connection with SICs by Appleby \cite{App05}.)

\begin{defn}\label{def:29}
The set of \textit{geometric equivalence classes of Weyl--Heisenberg SICs} is the set of equivalence classes of $\WHSIC{d}$ under the equivalence relation $S({\bv}) \sim_{\PEC} S(U\bv)$ whenever $U \in \PEC(d)$ (equivalently, whenever $U \in \EC(d)$.) This set of equivalence classes is denoted $\WHSIC{d}/\PEC(d)$. (It is not a group.) We denote the geometric equivalence class of a Weyl--Heisenberg SIC $S$ by $[S]$.
\end{defn}

\begin{rmk}\label{rmk:GEC-Sic}
Inside each geometric equivalence class, there are finitely many Weyl--Heisenberg SICs, and their number is at most $\frac{\abs{\PEC(d)}}{d^2}$. In a fixed dimension $d$ their number can vary, as it depends on the size of the stabilizer group in ${\PEC}(d)$ taking a SIC to itself. It is conjectured that every Weyl--Heisenberg SIC has an order $3$ stabilizer element, but larger stabilizer groups are also possible. This paper does not treat this problem but discusses it in \Cref{rmk:whsic}. It can be shown that $\abs{\PEC(d)}$ is of size $O(d^5)$.
\end{rmk}

\section{Fields generated by SICs}\label{sec:fields}

In this section, we define several number fields attached to arbitrary SICs. Many of these fields have been studied before, often within the context of assuming various conjectures about SICs. Here, we focus on relationships between ``SIC fields'' that can be proven rigorously, either in general (\Cref{subsec:NFS}) or in the case of Weyl--Heisenberg SICs (\Cref{subsec:NFS3}). 

In \Cref{sec:algebraic}, we will focus on SICs expressible in terms of algebraic numbers and discuss some conditional results about fields generated by SICs in that setting.

\subsection{Fields generated by general SICs}\label{subsec:NFS}

The fields we study here are each associated to an individual SIC $S$ and depend only on data computed from the orthogonal projections $\{\Pi_{i}: 1 \le i \le d^2\}$ defining the SIC. 
\begin{defn}\label{defn:fields}
Let $\PPi(S) = \{\frac{1}{d}\Pi_1, \ldots, \frac{1}{d}\Pi_{d^2}\}$ be an arbitrary SIC in dimension $d$, with each $\Pi_i = \bv_i\bv_i^\ct$ for a {\em unit vector} $\bv_i$. (Thus each $\bv_i$ is unique up to multiplication by a unit scalar.)
\begin{itemize}
\item[(1)]
The \textit{ratio SIC field} $\fieldvec{S}$ is the field extension of $\Q$ generated by all the ratios $v_{ij}/v_{ik}$ (for $v_{ik} \neq 0$) of the entries of the $d^2$ fiducial vectors $\bv_i$ of $S$.
\item[(2)]
The \textit{triple product SIC field} $\fieldtrip{S}$ is the field extension of $\Q$ generated by all the \textit{triple products} $\Tr(\Pi_i\Pi_j\Pi_k)$ (also called \textit{$3$-vertex Bargmann invariants} \cite[Sec.~8.2]{waldron18}).
\item[(3)]
The \textit{unitary invariant SIC field} $\fieldinv{S}$ is the field extension of $\Q$ generated by all the \textit{unitary invariants} $\Tr(\Pi_{i_1}\Pi_{i_2}\cdots\Pi_{i_n})$ for $n \in \N$ (also called \textit{$n$-vertex Bargmann invariants} \cite[Sec.~8.2]{waldron18}).
\item[(4)]
The \textit{projection SIC field} $\fieldproj{S}$ is the field extension of $\Q$ generated by all the entries of the $d^2$ Hermitian projection matrices $\Pi_j$. 
\item[(5)]
The \textit{extended projection SIC field} $\fieldeproj{S} := \fieldproj{S}\!(\xi_d)$, adjoining $\xi_d = -e^{\frac{\pi i}{d}}$.
\end{itemize} 
\end{defn}

A priori, these fields could be transcendental extensions of $\Q$, as happens in dimension $d=3$.

\begin{rmk}\label{rmk:322}
These fields have previously been studied with various names and notations.

\begin{enumerate}
\item
The ratio SIC field $\fieldvec{S}$ is the {\em SIC field} $\Q[S]$ attached to a general SIC $S$ by Appleby, Flammia, McConnell, and Yard \cite[p.~212 bottom]{appleby1}, as used in Thm.~1 and Conj.~2 of that paper. By definition, this field is independent of scaling factors in the $\bv_i$. It is determined by $\PPi(S)$, because the ratios satisfy
\begin{equation}
\frac{v_{ij}}{v_{ik}} = \frac{v_{ij}\ol{v}_{ik}}{v_{ik}\ol{v}_{ik}} = \frac{(\Pi_i)_{jk}}{(\Pi_i)_{kk}}.
\end{equation}
We may rescale each of the $d^2$ vectors in $S$ to give $S= \{\C \bw_i: 1 \le i \le d^2\}$ so that all entries of each $\bw_i$ belong to $\fieldvec{S}$. A given $\bv_i=\left(v_{i0}, v_{i1}, \cdots, v_{i(d-1)}\right)^T$ has at least one nonzero entry, say $v_{ik}$, and we may set $\bw_{i}=\frac{1}{v_{ik}} \bv_i$, whence $w_{ij}= v_{ij}/v_{ik} \in \fieldvec{S}$. For $d>1$ the vectors $\bw_i$ obtained this way are never unit vectors. 

It is asserted in \cite[Prop.~3]{appleby1} that, for a Weyl--Heisenberg SIC, the ratio SIC field is the same for any SIC $S' \in [S]$. 
The presented proof outline \cite[Sec.~7.1]{appleby1} appears to 
depend on some conjectural equivalences and characterizations of other related fields. 

\item 
Concerning the triple product SIC field $\fieldtrip{S}$, the fact that a SIC is determined up to unitary equivalence by its triple products was proved in 2011 by Appleby, Flammia, and Fuchs \cite[Thm.~3]{siclie1}; see also \cite[Cor.~8.1]{waldron18}. We show unconditionally for a Weyl--Heisenberg SIC $S$ that $\fieldtrip{S}$ is an invariant of its geometric $\PEC(d)$-orbit $[S]$; see \Cref{prop:triple-prod-field}.

\item
The unitary invariant SIC field $\fieldinv{S}$ is generated by the full set of projective unitary invariants of a general finite set of lines in $\C^d$; see \cite{CW16} and \cite[Ch.~8, Thm.~8.2]{waldron18}. We show in \Cref{prop:sicfields} that $\fieldtrip{S}= \fieldinv{S}$.

\item 
The projection SIC field $\fieldproj{S}$, in the special case that $S$ is a Weyl--Heisenberg SIC, coincides with the field $L$ appearing in \cite[Sec.~7]{appleby1}.

\item
The extended projection SIC field $ \fieldeproj{S}$, in the special case that $S$ is a Weyl--Heisenberg SIC, is the {\em SIC field} $\E$ as defined by Appleby, Flammia, McConnell, and Yard \cite[Sec.~4]{appleby2}. This definition of the SIC field is also used in \cite{acfw} and is treated in detail in Waldron \cite[Sec.~14.20]{waldron18}. Appleby, Flammia, McConnell, and Yard \cite{appleby2} state that the field $\E$ is well-defined on the $\PEC(d)$-orbit $[S]$ of a Weyl--Heisenberg SIC $S$. We supply a proof in \Cref{prop:215}.
\end{enumerate}
\end{rmk}

The next result establishes some inclusion relations among these fields.

\begin{prop}\label{prop:sicfields}
Let $S$ be an arbitrary line-SIC in dimension $d$. The fields associated to $S$ are related in the following ways.
\begin{itemize}
\item[(1)] $\fieldproj{S} = \fieldvec{S}\ol{\fieldvec{S}}$ (the compositum of $\fieldvec{S}$ and its complex conjugate field).
In particular, $\fieldproj{S}$ is invariant under complex conjugation. 
\item[(2)] $\fieldtrip{S} = \fieldinv{S} \subseteq \fieldproj{S}$.
\end{itemize}
\end{prop}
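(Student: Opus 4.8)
The plan is to establish the two equalities and the inclusion by relating everything back to the vectors $\vv_i$ and their entries. For part (1), I would argue both inclusions. For $\fieldvec{S}\,\overline{\fieldvec{S}} \subseteq \fieldproj{S}$: as noted in \Cref{rmk:322}(1), the ratio $v_{ij}/v_{ik}$ equals $(\Pi_i)_{jk}/(\Pi_i)_{kk}$, so each generator of $\fieldvec{S}$ lies in $\fieldproj{S}$; since $\fieldproj{S}$ is closed under complex conjugation (the entries of $\Pi_i$ are closed under conjugation because $\Pi_i$ is Hermitian, $(\Pi_i)_{kj} = \overline{(\Pi_i)_{jk}}$), the complex-conjugate generators $\overline{v_{ij}/v_{ik}}$ also lie in $\fieldproj{S}$, giving $\overline{\fieldvec{S}}\subseteq\fieldproj{S}$ and hence the compositum is contained in $\fieldproj{S}$. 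For the reverse inclusion $\fieldproj{S} \subseteq \fieldvec{S}\,\overline{\fieldvec{S}}$: for a fixed $i$, pick an index $k$ with $v_{ik}\neq 0$; then $(\Pi_i)_{jl} = v_{ij}\overline{v_{il}}$, and since $\mathrm{Tr}(\Pi_i)=1$ we have $\sum_m v_{im}\overline{v_{im}} = 1$, so $|v_{ik}|^2 = \big(\sum_m (v_{im}/v_{ik})\overline{(v_{im}/v_{ik})}\big)^{-1} \in \fieldvec{S}\,\overline{\fieldvec{S}}$; therefore $v_{ij}\overline{v_{il}} = (v_{ij}/v_{ik})\overline{(v_{il}/v_{ik})}\cdot|v_{ik}|^2 \in \fieldvec{S}\,\overline{\fieldvec{S}}$ for all $j,l$. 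This handles all entries of all $\Pi_i$. Invariance of $\fieldproj{S}$ under complex conjugation is then immediate, or follows again directly from Hermiticity.

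For part (2), the inclusion $\fieldtrip{S}\subseteq\fieldproj{S}$ is clear: $\mathrm{Tr}(\Pi_i\Pi_j\Pi_k)$ is a polynomial with integer coefficients in the entries of the $\Pi$'s, hence lies in $\fieldproj{S}$; the same argument gives $\fieldinv{S}\subseteq\fieldproj{S}$ once we know $\fieldtrip{S}=\fieldinv{S}$, so it suffices to prove that equality. One inclusion, $\fieldtrip{S}\subseteq\fieldinv{S}$, is trivial since triple products are among the unitary invariants (take $n=3$). For the reverse, I would show every $\mathrm{Tr}(\Pi_{i_1}\cdots\Pi_{i_n})$ lies in $\fieldtrip{S}$. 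The key identity is that for rank-one projections, $\Pi_a\Pi_b\Pi_c = \mathrm{Tr}(\Pi_a\Pi_b\Pi_c)\cdot\frac{\Pi_a\Pi_c}{\mathrm{Tr}(\Pi_a\Pi_c)}$ when $\mathrm{Tr}(\Pi_a\Pi_c)\neq 0$ — more precisely, writing $\Pi_a=\vv_a\vv_a^\ct$ etc., one has $\Pi_{i_1}\Pi_{i_2}\cdots\Pi_{i_n} = (\vv_{i_1}^\ct\vv_{i_2})(\vv_{i_2}^\ct\vv_{i_3})\cdots(\vv_{i_{n-1}}^\ct\vv_{i_n})\,\vv_{i_1}\vv_{i_n}^\ct$, so taking traces, $\mathrm{Tr}(\Pi_{i_1}\cdots\Pi_{i_n}) = (\vv_{i_1}^\ct\vv_{i_2})(\vv_{i_2}^\ct\vv_{i_3})\cdots(\vv_{i_n}^\ct\vv_{i_1})$. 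Hence any length-$n$ invariant factors as a product of the ``Bargmann-type'' invariants $\vv_a^\ct\vv_b$, and a triple product $\mathrm{Tr}(\Pi_a\Pi_b\Pi_c) = (\vv_a^\ct\vv_b)(\vv_b^\ct\vv_c)(\vv_c^\ct\vv_a)$ while $\mathrm{Tr}(\Pi_a\Pi_b) = |\vv_a^\ct\vv_b|^2$. In a SIC all $|\vv_a^\ct\vv_b|^2 = \frac{1}{d+1}$ for $a\neq b$ are rational, so dividing a product of triple products by appropriate $\mathrm{Tr}(\Pi\Pi)$'s lets one express any even- or odd-length cyclic product of the $\vv_a^\ct\vv_b$ in terms of triple products, provided the relevant two-point invariants are nonzero — which holds here since $\frac{1}{d+1}\neq 0$ and the diagonal terms are $1$. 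I would organize this as: reduce a general cyclic product of Bargmann invariants along the index cycle $i_1,\dots,i_n,i_1$ by repeatedly inserting factors $1 = (\vv_a^\ct\vv_b)(\vv_b^\ct\vv_a)/|\vv_a^\ct\vv_b|^2$ to break it into overlapping triples.

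The main obstacle I anticipate is the bookkeeping in this last reduction: making sure every Bargmann invariant that appears stays non-vanishing (handled by the SIC equiangularity, which forces $|\vv_a^\ct\vv_b|^2\in\{1,\tfrac{1}{d+1}\}$, both nonzero), and correctly handling repeated indices and short cycles ($n=1,2$ are trivially rational or tautological). A clean way to finish is to prove the stronger statement that the field generated by all ratios $(\vv_a^\ct\vv_b)/(\vv_a^\ct\vv_c)$ together with $\Q$ equals $\fieldtrip{S}$, and then observe that, using $\vv_a^\ct\vv_a = 1$ and rationality of $|\vv_a^\ct\vv_b|^2$, every single Bargmann invariant $\vv_a^\ct\vv_b$ is itself recoverable up to this field. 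I expect parts (1) and the inclusion in (2) to be short, with essentially all the content concentrated in the identity $\mathrm{Tr}(\Pi_{i_1}\cdots\Pi_{i_n}) = \prod (\vv_{i_k}^\ct\vv_{i_{k+1}})$ and the combinatorial argument reducing long products to triples.
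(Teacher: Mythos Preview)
Your argument for part (1) is essentially identical to the paper's: both directions proceed by writing entries of $\Pi_i$ as $v_{ij}\ol{v}_{ik}$ and exploiting the normalization $\sum_m |v_{im}|^2 = 1$ to recover $|v_{ik}|^2$ (hence all entries) from ratios and their conjugates.

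For part (2), your approach is correct but genuinely different from the paper's. The paper never uses the Bargmann factorization $\Tr(\Pi_{i_1}\cdots\Pi_{i_n}) = \prod_k (\vv_{i_k}^\ct\vv_{i_{k+1}})$. Instead it exploits the fact that the $d^2$ projectors $\Pi_i$ form a basis of $d\times d$ matrices, writes $\Pi_i\Pi_j = \sum_\ell \alpha_{ij}^\ell \Pi_\ell$, and shows directly that the structure constants satisfy $\alpha_{ij}^k = \tfrac{d+1}{d}\Tr(\Pi_i\Pi_j\Pi_k) - \tfrac{d(\delta_{ij}d+1)}{d+1} \in \fieldtrip{S}$; any longer product then reduces in the basis with coefficients polynomial in the $\alpha_{ij}^k$, so its trace lies in $\fieldtrip{S}$. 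Your chord-insertion induction (splitting an $n$-cycle into a triple product times an $(n-1)$-cycle, divided by the rational $\Tr(\Pi_{i_1}\Pi_{i_3})$) is equally valid and arguably more elementary, relying only on rank-one-ness and the SIC equiangularity to guarantee nonvanishing denominators; the paper's structure-constant argument is basis-theoretic and would adapt to other tight frames where the projectors span the matrix algebra, without needing the rank-one factorization. Your closing remarks about recovering individual Bargmann invariants are unnecessary for the stated result---the induction already finishes the proof---so you can safely drop them.
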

\begin{proof}
Write $\PPi(S) = \{\frac{1}{d}\Pi_1, \ldots, \frac{1}{d}\Pi_{d^2}\}$.

\textit{(1).} Write $\Pi_i = \bv_i\bv_i^\ct$, where $\bv_i= (v_{i0},v_{i1}, \ldots, v_{i(d-1)})^T$ and $\abs{\bv_i}^2=1$, for $1 \le i \le d^2$. The entries of $\Pi_i$ are of the form $v_{im}\ol{v}_{in}$ for $0 \le m,n \le d-1$. When both $v_{im} \neq 0$, $v_{in} \neq 0$, these entries can be expressed as
\begin{equation}
v_{im}\ol{v}_{in} = \frac{v_{im}\ol{v}_{in}}{\abs{\bv_i}^2} 
= \frac{1}{\sum_{p=0}^{d-1}(v_{ip}/v_{im})\ol{(v_{ip}/v_{in})}} \in \fieldvec{S}\ol{\fieldvec{S}},
\end{equation}
where $0 \le m, n, p \le d-1$.
Thus, $\fieldproj{S} \subseteq \fieldvec{S}\ol{\fieldvec{S}}$. To show the reverse inclusion, note that for $v_{in} \neq 0$,
\begin{equation}
\frac{v_{im}}{v_{in}} = \frac{v_{im}\ol{v}_{in}}{v_{in}\ol{v}_{in}} \mbox{ and }
\frac{\ol{v_{im}}}{\ol{v_{in}}}= \frac{v_{in}\ol{v}_{im}}{v_{in}\ol{v}_{in}}.
\end{equation}

\textit{(2).} 
First, the inclusion $\fieldinv{S} \subseteq \fieldproj{S}$ of (2) follows because $\Tr(\Pi_{i_1}\Pi_{i_2}\cdots\Pi_{i_n})$ is an integer polynomial in the entries of the various $\Pi_j$.

Second, we show the equality $\fieldtrip{S} = \fieldinv{S}$. Clearly $\fieldtrip{S} \subseteq \fieldinv{S}$. To show the reverse inclusion, first note that the $\Pi_i$ form a basis for the space of $d \times d$ complex matrices. This follows from non-degeneracy of the trace pairing because
\begin{equation}
\Tr(\Pi_i\Pi_j) = \frac{\delta_{ij}d+1}{d+1}, 
\mbox{ where } 
\delta_{ij} = 
\begin{cases}
1, & \mbox{if } i=j, \\
0, & \mbox{if } i\neq j,
\end{cases}
\end{equation}
and so the $d^2 \times d^2$ matrix $T$ with entries $\Tr(\Pi_i\Pi_j)$ is $T = \frac{d}{d+1}I + \frac{1}{d+1}J$ (where $I$ is the $d \times d$ identity matrix and $J$ is the $d \times d$ all-ones matrix) having $(T-dI)(T-\frac{d}{d+1}I) = 0$ and thus $\det T \neq 0$.

We express
\begin{equation}\label{eq:structureprod}
\Pi_i\Pi_j = \sum_{\ell} \alpha_{ij}^\ell \Pi_\ell
\end{equation}
for \textit{structure constants} $\alpha_{ij}^\ell \in \C$, having $1 \le i, j , \ell \le d^2$. We have 
\begin{equation}
\sum_{\ell} \alpha_{ij}^\ell = \Tr(\Pi_i\Pi_j) = \frac{\delta_{ij}d+1}{d+1}.
\end{equation}
Moreover, the triple products are related to the structure constants by
\begin{align}
\Tr(\Pi_i\Pi_j\Pi_k)
&= \sum_{\ell} \alpha_{ij}^\ell \Tr(\Pi_k\Pi_\ell) \\
&= \frac{d}{d+1} \alpha_{ij}^k + \frac{1}{d+1}\sum_\ell \alpha_{ij}^\ell \\
&= \frac{d}{d+1} \alpha_{ij}^k + \frac{\delta_{ij}d+1}{(d+1)^2}.
\end{align}
Thus, $\alpha_{ij}^k = \frac{d+1}{d}\Tr(\Pi_i\Pi_j\Pi_k)-\frac{d(\delta_{ij}d+1)}{d+1}$, and the field generated over $\Q$ by the structure constants is $\fieldtrip{S}$. Moreover, using \eqref{eq:structureprod} repeatedly, we can express any product $\Pi_{i_1}\Pi_{i_2}\cdots\Pi_{i_n}$ in the $\left\{\Pi_\ell\right\}_\ell$-basis with coefficients that are integer polynomials in the structure constants. Thus, since $\Tr(\Pi_\ell)=1$, $\Tr(\Pi_{i_1}\Pi_{i_2}\cdots\Pi_{i_n})$ is also an integer polynomial in the structure constants and is thus contained in $\fieldtrip{S}$.
\end{proof}

\begin{prop}\label{prop:triple-prod-field}
Let $S$ be an arbitrary line-SIC in dimension $d$. Then the triple product field satisfies the invariance property $\fieldtrip{US} = \fieldtrip{S}$ for any $U \in \EU(d)$. 
\end{prop}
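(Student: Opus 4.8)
The plan is to split into the two cases coming from the decomposition $\EU(d) = \U(d) \sqcup C_d\U(d)$: the unitary case is handled by cyclicity of the trace, and the antiunitary case reduces to the observation that $\fieldtrip{S}$ is closed under complex conjugation.

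First I would treat $U \in \U(d)$. The line-SIC $US$ is represented by the unit vectors $U\vv_i$, so its defining projections are $U\Pi_i U^\ct$. By cyclicity of the trace and $U^\ct U = I$,
\[
\Tr\!\left((U\Pi_iU^\ct)(U\Pi_jU^\ct)(U\Pi_kU^\ct)\right) = \Tr\!\left(U\Pi_i\Pi_j\Pi_kU^\ct\right) = \Tr(\Pi_i\Pi_j\Pi_k).
\]
Hence the generating sets of $\fieldtrip{US}$ and $\fieldtrip{S}$ literally coincide, so $\fieldtrip{US}=\fieldtrip{S}$.

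Next, since every $U \in C_d\U(d)$ has the form $C_dV$ with $V \in \U(d)$, and $\fieldtrip{VS} = \fieldtrip{S}$ by the first case, it suffices to prove $\fieldtrip{C_dS'} = \fieldtrip{S'}$ for an arbitrary line-SIC $S'$ (then apply it with $S' = VS$). The line-SIC $C_dS'$ is represented by the vectors $\overline{\vv_i}$, with defining projections $\overline{\vv_i}\,\overline{\vv_i}^\ct = \overline{\Pi_i}$. Because $\Pi_i$ is Hermitian, $\overline{\Pi_i} = \Pi_i^{T}$, so
\[
\Tr\!\left(\overline{\Pi_i}\,\overline{\Pi_j}\,\overline{\Pi_k}\right) = \Tr\!\left(\Pi_i^{T}\Pi_j^{T}\Pi_k^{T}\right) = \Tr\!\left((\Pi_k\Pi_j\Pi_i)^{T}\right) = \Tr(\Pi_k\Pi_j\Pi_i).
\]
Equivalently, $\overline{\Tr(\Pi_i\Pi_j\Pi_k)} = \Tr(\Pi_k\Pi_j\Pi_i)$, which is again one of the triple products of $S'$; thus $\fieldtrip{S'}$ is closed under complex conjugation. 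Since $\fieldtrip{C_dS'}$ is generated by the complex conjugates of the generators of $\fieldtrip{S'}$, we obtain $\fieldtrip{C_dS'} = \overline{\fieldtrip{S'}} = \fieldtrip{S'}$.

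Combining the two cases yields $\fieldtrip{US} = \fieldtrip{S}$ for all $U \in \EU(d)$. There is no genuinely hard step: the only point needing care is noticing that the antiunitary generators conjugate triple products rather than fixing them, which does no harm because the (finite) generating set of $\fieldtrip{S}$ is carried to itself under simultaneous complex conjugation and index-reversal. In particular one should be careful with the identities $\overline{\Pi_i} = \Pi_i^{T}$ and $(\Pi_i\Pi_j\Pi_k)^{T} = \Pi_k^{T}\Pi_j^{T}\Pi_i^{T}$, which are exactly what make $\overline{\Tr(\Pi_i\Pi_j\Pi_k)}$ land back in the list of triple products of $S'$.
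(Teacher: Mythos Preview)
Your proof is correct and follows essentially the same approach as the paper: split $\EU(d)$ into the unitary and antiunitary pieces, use cyclicity of the trace for the unitary part, and use $\overline{\Pi_i}=\Pi_i^{T}$ together with $\Tr(\Pi_i^{T}\Pi_j^{T}\Pi_k^{T})=\Tr(\Pi_k\Pi_j\Pi_i)$ for the antiunitary part. The only cosmetic difference is that you handle the antiunitary case as $C_d$ applied after $V$ (reducing to $C_d$ alone), whereas the paper writes $U=CV$ and computes with $\Pi_i'=V^{-1}\Pi_i^{\top}V$ directly; the underlying identity is identical.
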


\begin{proof}
Write $S= \{\C\bv_i : 1 \le i \le d^2\}$, and set $\PPi(S) = \{\frac{1}{d}\Pi_1, \ldots, \frac{1}{d}\Pi_{d^2}\}$ with $\Pi_i= \bv_i \bv_i^{\dagger}$. Then set $\PPi(US) = \{\frac{1}{d}\Pi_1', \ldots, \frac{1}{d}\Pi_{d^2}'\}$. Write $U=CV$, where $C$ is identity or complex conjugation, and $V \in \U(d)$; then either $\Pi_i' = V^{-1}\Pi_iV$, or $\Pi_i' = V^{-1}\Pi_i^\top V$. In the first case, $\Tr(\Pi_i'\Pi_j'\Pi_k') = \Tr(V^{-1}\Pi_i\Pi_j\Pi_kV) = \Tr(\Pi_i\Pi_j\Pi_k)$. In the second case,
\begin{equation}
\Tr(\Pi_i'\Pi_j'\Pi_k') = \Tr\!\left(V^{-1}\Pi_i^\top\Pi_j^\top\Pi_k^\top V\right) = \Tr\!\left((\Pi_k\Pi_j\Pi_i)^\top\right) = \Tr(\Pi_k\Pi_j\Pi_i).
\end{equation}
So $\fieldtrip{US} = \fieldtrip{S}$ in both cases.
\end{proof}

We derive some unconditional implications when these fields give Galois extensions of $\Q$.
\begin{lem}\label{lem:normal-conditions}
Let $S$ be an arbitrary line-SIC in dimension $d$.
\begin{itemize}
\item[(1)]
If $\fieldvec{S}$ is invariant under complex conjugation $\sigma_c$, then $\fieldvec{S} = \fieldproj{S}$. Thus, if $\fieldvec{S}$ is a Galois extension of $\Q$, then $\fieldproj{S}$ is a Galois extension of $\Q$.
\item[(2)]
If $\fieldproj{S}$ is a Galois extension of $\Q$, then $\fieldeproj{S}$ is a Galois extension of $\Q$. 
\end{itemize}
\end{lem}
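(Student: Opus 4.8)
The plan is to reduce both parts to \Cref{prop:sicfields}(1) together with two standard facts from Galois theory: an algebraic normal extension of $\Q$ contained in $\C$ is stable under complex conjugation, and the compositum (inside a common overfield) of two extensions of $\Q$, each of which is Galois over $\Q$, is itself Galois over $\Q$. Although $\fieldvec{S}$ and $\fieldproj{S}$ may a priori be transcendental over $\Q$, each of the hypotheses supplied --- $\fieldvec{S}/\Q$ Galois in part (1), $\fieldproj{S}/\Q$ Galois in part (2) --- forces the relevant field to be algebraic over $\Q$, so these facts apply.

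For part (1), the assertion ``$\fieldvec{S}$ is invariant under $\sigma_c$'' is literally the equality $\ol{\fieldvec{S}} = \fieldvec{S}$, so \Cref{prop:sicfields}(1) immediately gives $\fieldproj{S} = \fieldvec{S}\,\ol{\fieldvec{S}} = \fieldvec{S}$. For the second sentence, assume $\fieldvec{S}/\Q$ is Galois, hence algebraic and normal. Since $\sigma_c$ is a $\Q$-algebra automorphism of $\C$, its restriction to $\fieldvec{S}$ is a $\Q$-embedding $\fieldvec{S} \hookrightarrow \ol{\Q} \subseteq \C$; normality forces its image to be $\fieldvec{S}$ again, so $\fieldvec{S}$ is $\sigma_c$-invariant. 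The first part then yields $\fieldproj{S} = \fieldvec{S}$, which is Galois over $\Q$ by hypothesis.

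For part (2), write $\fieldeproj{S} = \fieldproj{S}(\xi_d)$ as the compositum $\fieldproj{S}\cdot\Q(\xi_d)$ inside $\C$. The field $\Q(\xi_d)$ is a cyclotomic field, generated by the root of unity $\xi_d = -e^{\frac{\pi i}{d}}$, and is therefore abelian --- in particular Galois --- over $\Q$; and $\fieldproj{S}/\Q$ is Galois by hypothesis. Since the compositum of two Galois extensions of $\Q$ is Galois over $\Q$, the field $\fieldeproj{S}$ is Galois over $\Q$.

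I do not expect a genuine obstacle: the argument is a short exercise in Galois theory. The only points requiring care are the bookkeeping remark that ``Galois'' must be read as ``algebraic, normal, separable'' (so that the conjugation argument applies and no further finiteness input is needed), and the correct invocation of \Cref{prop:sicfields}(1) for the identity $\fieldproj{S} = \fieldvec{S}\,\ol{\fieldvec{S}}$; everything else is immediate.
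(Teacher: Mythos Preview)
Your proof is correct and follows essentially the same approach as the paper: both parts reduce to \Cref{prop:sicfields}(1) together with the standard facts that a Galois extension of $\Q$ is stable under complex conjugation and that a compositum of Galois extensions is Galois. Your added care about algebraicity is a welcome clarification but does not constitute a different route.
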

\begin{proof}
The implication $\fieldvec{S}= \fieldproj{S}$ follows from \Cref{prop:sicfields}(1). Moreover, if $K=\fieldproj{S}$ is Galois over $\Q$, then $\fieldeproj{S}$ is the compositum $KL$, with $L= \Q(e^{\frac{\pi i}{d}})$ an abelian extension of $\Q$, so the compositum is a Galois extension of $\Q$.
\end{proof}

\subsection{Fields generated by Weyl--Heisenberg SICs}\label{subsec:NFS3}

We now derive some properties of the fields $\fieldvec{S}$, $\fieldtrip{S}$, $\fieldproj{S}$, and $\fieldeproj{S}$ that are provable in the special case of Weyl--Heisenberg SICs.

\begin{prop}\label{prop:fieldproj}
Let $S$ be a Weyl--Heisenberg line-SIC $S = S(\bv)$ of dimension $d \ge 2$. Then:

\begin{itemize}
\item[(1)]
The field $\fieldtrip{S}$ contains the $d$-th root of unity $\zeta_d=e^{\frac{2\pi i}{d}}$.
\item[(2)]
The index $[\fieldeproj{S} : \fieldproj{S}] \in \{1,2\}$. It is $1$ whenever $d$ is odd. 
\end{itemize}
\end{prop}
\begin{proof}
We first note that for arbitrary column vectors $\bw_i \in \C^d$ the triple inner product identity
\begin{align}
\langle \bw_1, \bw_2\rangle \langle \bw_2, \bv_3\rangle \langle \bw_3, \bw_1\rangle 
&= \Tr\!\left(\bw_1^{\ct} \bw_2 \bw_2^{\ct} \bw_3 \bw_3^{\ct} \bw_1\right)\\
&= \Tr\!\left(\bw_1 \bw_1^{\ct} \bw_2 \bw_2^{\ct} \bw_3 \bw_3^{\ct} \right)\\
&= \Tr\!\left(\Pi_{\bw_1} \Pi_{\bw_2} \Pi_{\bw_3}\right)
\end{align}
expressing $\langle \bw_1, \bw_2\rangle \langle \bw_2, \bv_3\rangle \langle \bw_3, \bw_1\rangle$ as a trace of a product of projections. Consequently, when such a triple inner product consists of vectors $\bw_i$ in a line-SIC $S$, then it belongs to $\fieldtrip{S}$.

\textit{(1).} 
Let $\bv = (v_0, v_1, \ldots, v_{d-1})^\top$ be a fiducial vector for the Weyl--Heisenberg SIC $S$, and let $\Pi= \bv \bv^{\dagger}$. Let $X, Z$ be Weyl--Heisenberg group generators in \Cref{def:WH-group}. The ratio of triple inner products 
\begin{align}
\frac{\langle \bv, Z^{-1}\bv \rangle \langle Z^{-1}\bv, X\bv \rangle \langle X\bv, \bv \rangle}{\langle \bv, X^{-1}\bv \rangle \langle X^{-1}\bv, Z\bv \rangle \langle Z\bv, \bv \rangle}
&= \frac{\langle Z\bv, \bv \rangle \langle \bv, ZX\bv \rangle \langle X\bv, \bv \rangle}{\langle X\bv, \bv \rangle \langle \bv, XZ\bv \rangle \langle Z\bv, \bv \rangle} \\
&= \frac{\langle \bv, ZX\bv \rangle}{\langle \bv, XZ \bv \rangle} = \zeta_d.
\end{align}
Via the SIC relations, $\abs{\langle \bv_{i}, \bv_{j} \rangle} = \frac{1}{\sqrt{d+1}} \abs{\bv}^2$ if $i \ne j$, and $\abs{\langle \bv_{i}, \bv_{j} \rangle} = \abs{\bv}^2$ if $i=j$. All factors in these ratios are nonzero. Therefore, $\zeta_d \in \fieldtrip{S}$.

\textit{(2).} 
By (1), since $\fieldtrip{S} \subseteq \fieldproj{S}$, we have $\zeta_d \in \fieldproj{S}$. Now $\fieldeproj{S}= \fieldproj{S} (e^{\frac{\pi i}{d}})$ is at most a quadratic extension of $\fieldproj{S}$, since the field index $[\Q(e^{\frac{\pi i}{d}}): \Q (e^{\frac{2\pi i}{d}})]$ is $2$ if $d$ is even and is $1$ if $d$ is odd. 
\end{proof}

The next result bounds the index $[\fieldproj{S}: \fieldtrip{S}]$ for Weyl--Heisenberg line-SICs. 

\begin{prop}\label{prop:fieldtrip-up}
Let $S$ be a Weyl--Heisenberg line-SIC $S = S(\bv)$ of dimension $d \ge 2$. 

\begin{enumerate}
\item[(1)]
Let $L(S) = \fieldtrip{S}[\langle \bv, X\bv\rangle, \langle \bv, Z\bv\rangle]$. Then $\fieldproj{S} \subseteq L(S)$. 
\item[(2)]
One has $L(S) = \fieldtrip{S}\!({\sqrt[d]{\beta_1}, \sqrt[d]{\beta_2}})$, for some $\beta_1, \beta_2 \in \fieldtrip{S}$. Thus $L(S)$ is an abelian extension of $\fieldtrip{S}$ whose degree divides $d^2$, and hence $[\fieldproj{S}: \fieldtrip{S}]$ divides $d^2$.
\end{enumerate}
\end{prop}
\begin{proof}
In this proof, we set $D_\p = X^{p_1}Z^{p_2}$, where $X, Z$ are Weyl--Heisenberg group generators as in \Cref{def:WH-group}, and $\p = (p_1,p_2) \in \left(\Z/d\Z\right)^2$. The $D_\p$ are a choice of coset representatives for $\WH(d)$ modulo its center. (Note that this definition of $D_\p$ differs by scalar phase factors from the quantum mechanics definition of \textit{displacement operators} $D_\p$ used with SICs, e.g., \cite[eq.~(4)]{appleby2}.)
 
As in the previous proof, for arbitrary column vectors $\{\bw_i \in \C^d: 1 \le i \le r$ for each $r \ge 1$ we have the $r$-fold cyclic inner product identity
\begin{align}
\langle \bw_1, \bw_2\rangle \langle \bw_2, \bw_3\rangle \ldots \langle \bw_r, \bw_1\rangle 
&= \Tr \left( \Pi_{\bw_1} \Pi_{\bw_2}\cdots \Pi_{\bw_r}\right)
\end{align}
expressed as a trace of a cyclic product of projection matrices. If the vectors $\bw_i$ are vectors in a SIC $S$, then the right side is by definition in $\fieldinv{S}$, hence in $\fieldtrip{S}$ by \Cref{prop:sicfields}. 

\textit{(1).} 
Let $\bv = (v_0, v_1, \ldots, v_{d-1})^\top$ be a fiducial vector for the Weyl--Heisenberg SIC $S$, and let $\Pi= \bv \bv^{\dagger}$. For $X, Z$ as in \Cref{def:WH-group}, let $D_{\p}= X^{p_1} Z^{p_2}$, where $\p= (p_1, p_2)$ with $0 \le p_i < d$. Then, the vectors $\bv_{\p} = X^{p_1}Z^{p_2}\bv$ form a complete set of vectors for the line-SIC, all having $\norm{\bv_{\p}}^2= \norm{\bv}^2$. Below, we will also use the alternate numbering $\bv_i$ with $i = d p_1+ p_2$ having $0 \le i < d^2$.

Let ${\bf e}_1 = (1,0)$ and ${\bf e}_2 = (0,1)$ and set $\alpha_1:=\nu_{{\bf e}_1}= \nu_{(1,0) }= \langle \bv, X \bv \rangle$ and $\alpha_2= \nu_{{\bf e}_2}= \nu_{(0,1)}= \langle \bv, Z \bv \rangle$. Define the field $L(S) := \fieldtrip{S}(\alpha_1, \alpha_2)$. By \Cref{prop:fieldproj}, $\zeta_d \in \fieldtrip{S}$.

To show $\fieldproj{S} \subseteq L(S)$, it suffices to show that all entries $(\Pi_i)_{k, \ell}$ belong to $L(S)$ for $1 \le i \le d^2$ and $0 \le k, \ell < d$. Set $\nu_\p = \langle \bv, D_\p \bv \rangle$ with $\p= (p_1, p_2)$ as above. Now for $\p = p_1{\bf e}_1 + p_2{\bf e}_2$, the product $\nu_{{\bf e}_1}^{p_1} \nu_{{\bf e}_2}^{p_2} \nu_{-\p}$ is a $(p_1+p_2+1)$-fold cyclic inner product of various $\nu_{i}$ (up to multiplication by a $d$-th root of unity), and thus by \Cref{prop:sicfields}(2), $\nu_{{\bf e}_1}^{p_1} \nu_{{\bf e}_2}^{p_2} \nu_{-\p} \in \fieldtrip{S}(\alpha_1, \alpha_2, \zeta_d)$. We conclude that all $\nu_{-\p}= \langle \bv, X^{-p_1}Z^{-p_2}\bv \rangle$ are in $\fieldtrip{S}(\alpha_1, \alpha_2, \zeta_d)$.

{\bf Claim.} {\em The $D_\p$ for $\p \in \left(\Z/d\Z\right)^2$ form a basis of the matrix algebra ${\rm Mat}_{d \times d}(\Q(e^{\frac{2\pi i}{d}}))$ over the field $\Q(\zeta_d)=\Q(e^{\frac{2\pi i}{d}})$.}

{\it Proof of claim.}
By direct computation,
\begin{equation}
D_\p^\ct D_\q 
= Z^{-p_2}X^{-p_1} X^{q_1}Z^{q_2} 
= Z^{-p_2} X^{-p_1+q_1} Z^{q_2} 
= \zeta_d^{(p_1-q_1)p_2} X^{-p_1+q_1} Z^{-p_2} Z^{q_2} 
= \zeta_d^{(p_1-q_1)p_2} D_{-\p+\q}.
\end{equation}
The trace of a Weyl--Heisenberg matrix is zero unless that matrix is a multiple of the identity, so
\begin{equation}
\Tr(D_\p^\ct D_\q)
= d\,\zeta_d^{(p_1-q_1)p_2} \delta_{\p\q} = d \,\delta_{\p\q}.
\end{equation}
It follows that the $D_\p$ form an orthogonal basis for ${\rm Mat}_{d \times d}(\Q(\zeta_d))$ over $\Q(\zeta_d)$ with respect to the non-degenerate Hermitian pairing $(A,B) \mapsto \Tr(A^\ct B)$. (Concretely, if $M \in {\rm Mat}_{d \times d}(\Q(\zeta_d))$, then $M = \frac{1}{d}\sum_{\p} \Tr(D_\p^\ct M) D_\p$, and the coefficients $\Tr(D_\p^\ct M) \in \Q(\zeta_d)$.) This concludes the proof of the subclaim. 

\medskip

It follows from the claim that all the matrices $E_{k, \ell}$ for $0 \le k, \ell \le d-1$ with entry $1$ in the $(k, \ell)$-position and $0$ elsewhere are linear combinations of the $D_\p$ with coefficients over the field $\Q(\zeta_d)$. Thus 
\begin{equation}
(\Pi_{1})_{k, \ell}= \langle \bv, E_{k, \ell} \bv \rangle \in \fieldtrip{S}(\alpha_1, \alpha_2)=L(S).
\end{equation}
Now for $i=(d-1)p_1 +p_2$ we have 
\begin{equation}
(\Pi_{i})_{k, \ell}= \langle X^{p_1}Z^{p_2}\bv, E_{k, \ell} X^{p_1}Z^{p_2} \bv\rangle = \langle \bv, X^{-p_1}Y^{-p_2}E_{k, \ell} X^{p_1}Z^{p_2} \bv\rangle
\end{equation}
by unitarity. (Recall that, if $U \in U(d)$ is unitary, then $\langle \bw_1, \bw_2 \rangle = \langle U\bw_1, U \bw_2 \rangle$.) The right side $X^{-p_1}Y^{-p_2}E_{k, \ell} X^{p_1}Z^{p_2}$ is expressible as a linear combination of the $D_p$ with coefficients in $\Q(\zeta_d)$, each term of the expanded inner product is in $\fieldtrip{S}(\alpha_1, \alpha_2)=L(S)$. Hence $(\Pi_i)_{k, \ell} \in L(S)$ for all $i, k, \ell$. We conclude that $\fieldproj{S} \subseteq L(S)$.

\textit{(2).} 
Set $\beta_1 = \alpha_1^d = \nu_{{\bf e}_1}^{d}$. Since $X$ is unitary and $X^d=I$, we have
\begin{align}
\beta_1 
&= \langle \bv, X\bv \rangle^d \\
&= \langle \bv, X\bv \rangle \langle X\bv, X^2\bv \rangle \cdots \langle X^{d-1}\bv, X^d\bv \rangle\\
&= \langle \bv, X\bv \rangle \langle X\bv, X^2\bv \rangle \cdots \langle X^{d-1}\bv, \bv \rangle \in \fieldinv{S}.
\end{align}
Since $\fieldinv{S} = \fieldtrip{S}$ we obtain $\beta_1= (\alpha_1)^d \in \fieldtrip{S}$. Now $\alpha_1 = \sqrt[d]{\beta_1}$, and since $\zeta_d \in \fieldtrip{S}$ all of its algebraic conjugates are in $L(S)$.
 
Set $\beta_2 = (\alpha_2)^d= \nu_{{\bf e}_2}^{d} = \langle \bv, Z\bv^d \rangle$. Since $Z$ is unitary and $Z^d=I$ we obtain similarly that $\beta_2= \alpha_2^d \in \fieldtrip{S}$. Thus $\alpha_2=\sqrt[d]{\beta_2}$, and all of its algebraic conjugates are in $L(S)$. We conclude that adjoining these two elements to $\fieldtrip{S}$ gives an abelian extension $L(S)$ of $\fieldtrip{S}$, which is a Kummer extension whose degree divides $d^2$. The field $L(S)$ contains $\fieldproj{S}$, and $\fieldtrip{S} \subseteq \fieldproj{S}$, which implies $[\fieldproj{S}: \fieldtrip{S}]$ divides $d^2$.
\end{proof}

We now study the field $\fieldeproj{S}$ on the $\PEC(d)$-orbit of $S$ and show it is a $\PEC(d)$-invariant. We recall a characterization of generators of the Clifford group.
\begin{prop}\label{prop:Waldron}
The Clifford group $\CG(d)$ is generated by all unitary scalar matrices $z I$ with $\abs{z}=1$, together with
\begin{equation}
X, Z, F, R
\end{equation}
where $X, Z$ are Weyl--Heisenberg group generators, $F$ is the discrete Fourier transform
\begin{equation}
F_{j, k} = \frac{1}{\sqrt{d}} e^{ \frac{2 \pi i jk}{d}} \,\mbox{ for }\, 0 \le j, k \le d-1,
\end{equation}
and $R$ is a diagonal matrix with diagonal entries
\begin{equation}
R_{j,j} = \xi_d^{j(j+d)}= (-1)^{jd} e^{\frac{\pi i j^2}{d}} \,\mbox{ for }\, 0 \le j \le d-1.
\end{equation}
\end{prop}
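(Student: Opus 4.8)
The plan is to prove Proposition~\ref{prop:Waldron} by identifying a known generating set for $\CG(d)$ in the literature (this is a classical result of Appleby~\cite{App05}, restated in Waldron~\cite{waldron18}) and then checking that the four matrices $X, Z, F, R$ listed here, together with the unitary scalars, generate the same group. Since the statement is attributed to Waldron, the real content is a bookkeeping argument matching conventions, not a new theorem; I would structure the proof as a reduction to a cited result plus verification that each matrix lies in $\CG(d)$ and that nothing is missing.

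\textbf{Step 1: Each listed matrix normalizes $\WH(d)$.} First I would verify directly that $X, Z, F, R \in \CG(d)$ by computing the conjugation action on the generators $X, Z$ of $\WH(d)$. For $X$ and $Z$ this is immediate from $ZX = \zeta XZ$. For $F$, one checks $F X F^{-1} = Z^{-1}$ (or $Z$, depending on sign conventions) and $F Z F^{-1} = X$, using the defining formula $F_{j,k} = \tfrac{1}{\sqrt d} e^{2\pi i jk/d}$; this is the standard fact that the Fourier transform swaps the shift and modulation operators. For $R$, a diagonal matrix with $R_{j,j} = \xi_d^{j(j+d)}$, I would compute $R X R^{-1}$ and show it equals $\xi_d^{\,c} X Z$ for an appropriate integer $c$, while $R Z R^{-1} = Z$ since diagonal matrices commute. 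The key identity is that $R_{j+1,j+1}/R_{j,j}$ is a controlled power of $\xi_d$, which is exactly what makes $R$ a ``partial Fourier/metaplectic'' element. These computations are routine but convention-sensitive; I would do them once carefully and cite Appleby~\cite[App.~A]{App05} for the parallel statements.

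\textbf{Step 2: The listed matrices generate all of $\CG(d)$.} Here I would invoke the structure theorem: modulo its center (the scalars $zI$, $|z|=1$), the Clifford group fits in an exact sequence
\begin{equation}
1 \to \PWH(d) \to \PC(d) \to \SL_2(\Z/D\Z) \ltimes (\text{something}) \to 1,
\end{equation}
with $D = d$ or $2d$ — the image in the symplectic/modular group is generated by the images of $F$ and $R$ (which map to the standard generators $\left(\begin{smallmatrix} 0 & -1 \\ 1 & 0 \end{smallmatrix}\right)$ and $\left(\begin{smallmatrix} 1 & 0 \\ 1 & 1 \end{smallmatrix}\right)$ of $\SL_2$), while the kernel $\PWH(d)$ is generated by the images of $X$ and $Z$. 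So any $U \in \CG(d)$ can be written, modulo scalars and modulo $\WH(d)$, as a word in $F$ and $R$; peeling off that part leaves an element of $\WH(d)$, i.e.\ a scalar times a word in $X, Z$; the leftover scalar is absorbed into the generating set of scalar matrices. I would cite Waldron~\cite[Ch.~6]{waldron18} or Appleby~\cite{App05} for the precise statement that $F$ and $R$ generate the relevant $\SL_2$ (for odd $d$ this is $\SL_2(\Z/d\Z)$; for even $d$ one needs care, and the extra factor of $2$ in $d'$ is exactly this subtlety).

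\textbf{The main obstacle} I anticipate is the even-dimensional case and the precise scalar/sign bookkeeping: the group $\WH(d)$ has order $2d^3$ when $d$ is even, $\xi_d$ is a $2d$-th root of unity, and the matrix $R$ carries the factor $(-1)^{jd}$ precisely to make the conjugation action integral in that case. Getting the exact powers of $\xi_d$ in $R X R^{-1}$ right, and confirming that $\langle F, R\rangle$ surjects onto the full modular group acting on $(\Z/d'\Z)^2$ rather than a proper subgroup, is the delicate part — but it is exactly the content of the cited references, so the honest approach is to verify the generation-of-$\SL_2$ step by citation and restrict my own computation to confirming $X, Z, F, R \in \CG(d)$ and that the three types of generators (scalars, Heisenberg, symplectic) exhaust a general Clifford element. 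Since everything here is standard, I would keep the proof short, emphasizing the exact sequence and deferring the modular-group generation to Appleby and Waldron.
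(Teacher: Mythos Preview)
Your proposal is correct, and in substance it reduces to the same thing the paper does: the paper's entire proof is the one-line citation ``This is \cite[Thm.~14.1]{waldron18},'' with no independent verification. Your Step~1 and Step~2 amount to a sketch of the argument behind that cited theorem (the exact sequence with $\SL_2(\Z/d'\Z)$ and the generation of the symplectic quotient by the images of $F$ and $R$), so you are supplying more detail than the paper itself, but nothing you wrote is wrong or in tension with it.
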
 
\begin{proof}
This is \cite[Thm.~14.1]{waldron18}. Every group element is a product of finitely many generators, with exactly one use of a unitary scalar matrix $zI$.
\end{proof}

\begin{prop}\label{prop:215}
Let $S$ be a Weyl--Heisenberg line-SIC $S= S(\bv)$ of dimension $d$.
\begin{itemize}
\item[(1)] 
For any $M \in \EC(d)$,
\begin{equation}
\fieldeproj{MS} = \fieldeproj{S},
\end{equation}
where $MS$ denotes the Weyl--Heisenberg line-SIC having fiducial vector $\bw:=M\bv$.
\item[(2)] 
$\fieldeproj{S}$ is equal to the compositum of all the $\fieldproj{MS}$ for $M \in \EC(d)$. Thus $\fieldeproj{S}$ depends only on the geometric equivalence class $[S]$.
\end{itemize}
\end{prop}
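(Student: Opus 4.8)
The plan is to prove \Cref{prop:215} by reducing to the generators of $\CG(d)$ described in \Cref{prop:Waldron} and tracking how each generator transforms the entries of the projection matrices $\Pi_i = \bv_i\bv_i^\dagger$. First I would observe that it suffices to show the inclusion $\fieldeproj{MS} \subseteq \fieldeproj{S}$ for all $M \in \EC(d)$: applying this to $M^{-1}$ (which again lies in $\EC(d)$) and noting $M^{-1}(MS) = S$ gives the reverse inclusion, hence equality. Since $\EC(d) = \CG(d) \sqcup C_d\,\CG(d)$ and complex conjugation sends each $\Pi_i$ to $\overline{\Pi_i} = \Pi_i^\top$, which has the same entries up to conjugation --- and $\fieldeproj{S}$ is closed under complex conjugation by \Cref{prop:sicfields}(1) together with the fact that $\xi_d$'s conjugate $\overline{\xi_d} = \xi_d^{-1}$ lies in $\fieldeproj{S}$ --- we may assume $M \in \CG(d)$. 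Because $\fieldeproj{MS}$ depends only on the projective image of $M$ (scalar matrices cancel in $\Pi_i = \bv_i\bv_i^\dagger$), and because $\fieldeproj{(MN)S} \subseteq \fieldeproj{S}$ follows from $\fieldeproj{(MN)S} = \fieldeproj{M(NS)} \subseteq \fieldeproj{NS} \subseteq \fieldeproj{S}$ once the single-generator case is known, it is enough to verify the inclusion when $M$ is one of $X$, $Z$, $F$, or $R$.

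Next I would carry out the four generator computations. For $X$ and $Z$: conjugation by $X$ permutes the entries of $\Pi_i$ cyclically (with index shifts mod $d$), and conjugation by $Z$ multiplies the $(j,k)$ entry by $\zeta_d^{j-k}$; since $\zeta_d = \xi_d^2 \in \fieldeproj{S}$, both operations keep all entries inside $\fieldeproj{S}$, so $\fieldproj{XS}, \fieldproj{ZS} \subseteq \fieldeproj{S}$ and then $\fieldeproj{XS}, \fieldeproj{ZS} \subseteq \fieldeproj{S}$ upon adjoining $\xi_d$. For $R$: conjugation by the diagonal matrix $R$ multiplies the $(j,k)$ entry of $\Pi_i$ by $R_{j,j}\overline{R_{k,k}} = \xi_d^{j(j+d) - k(k+d)}$, a power of $\xi_d$, which lies in $\fieldeproj{S}$. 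For $F$: here $\Pi_i' = F\Pi_i F^{-1}$, and since $F_{j,k} = \frac{1}{\sqrt d}\zeta_d^{jk}$, the entries of $\Pi_i'$ are $\frac{1}{d}\sum_{a,b}\zeta_d^{ja - kb}(\Pi_i)_{ab}$; all of $\zeta_d$, $\frac{1}{d}$, and the $(\Pi_i)_{ab}$ lie in $\fieldeproj{S}$ (the entries of $\Pi_i$ lie in $\fieldproj{S} \subseteq \fieldeproj{S}$), so each entry of $F\Pi_i F^{-1}$ lies in $\fieldeproj{S}$. This establishes $\fieldeproj{MS}\subseteq\fieldeproj{S}$ for each generator, and hence for all $M \in \EC(d)$, and hence equality by the symmetry argument above.

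For the final sentence, let $E$ denote the compositum of all $\fieldproj{MS}$ as $M$ ranges over $\EC(d)$. Taking $M = I$ gives $\fieldproj{S} \subseteq E$; conversely each $\fieldproj{MS} \subseteq \fieldeproj{MS} = \fieldeproj{S}$ by the first part, so $E \subseteq \fieldeproj{S}$. To get the reverse inclusion $\fieldeproj{S} \subseteq E$, I would exhibit a specific $M \in \EC(d)$ for which $\xi_d \in \fieldproj{MS}$, so that $\fieldeproj{S} = \fieldproj{S}(\xi_d) \subseteq E$. A natural choice is $M = R$: since $\fieldproj{RS}$ contains the $(j,j)$-shifted entries $\xi_d^{j(j+d)-k(k+d)}(\Pi)_{jk}$ together with the original entries $(\Pi)_{jk}$ (as $\fieldproj{RS}$, being generated by the entries of the projectors of the SIC $RS$, also contains the entries of the translates of those projectors by $\WH(d)$, in particular $R\bv$ and, after further conjugation, relates back to $\bv$), one can extract a ratio equal to a nontrivial power of $\xi_d$ that generates $\Q(\xi_d)$ over $\Q(\zeta_d)$. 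More carefully, using \Cref{lem:fieldproj}(1) we already have $\zeta_d \in \fieldproj{S}$, so only a square root of $\zeta_d$ (when $d$ is even) must be produced; the entry $(\Pi')_{j,0} = \xi_d^{j(j+d)} v_j\overline{v}_0$ of $R\bv(R\bv)^\dagger$ and the entry $v_j \overline{v}_0$ of $\bv\bv^\dagger$ --- the latter lying in $\fieldproj{S}\subseteq\fieldproj{S\text{-orbit}}$ --- have ratio $\xi_d^{j(j+d)}$, and choosing $j$ with $j(j+d)$ odd (possible for even $d$, e.g.\ $j=1$ gives $1+d$ odd) yields $\xi_d^{\mathrm{odd}}$, which together with $\zeta_d$ generates $\Q(\xi_d)$. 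Hence $\xi_d \in \fieldproj{S}(\Pi\text{-entries of }RS) \subseteq E$, giving $\fieldeproj{S} \subseteq E$ and therefore equality.

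The main obstacle I anticipate is the bookkeeping in the $F$-conjugation step and, more subtly, the very last claim: one must be careful that $\fieldproj{RS}$ --- defined as the field generated by the entries of the $d^2$ projectors of the SIC $RS$ --- genuinely contains both a shifted entry $\xi_d^{j(j+d)}v_j\overline{v}_0$ and an unshifted entry from which to form the quotient, since the SIC $RS$ has fiducial $R\bv$ and its other projectors come from translating $R\bv$ by $\WH(d)$, not from $\bv$ directly. Resolving this requires noting that conjugating by $X$ and $Z$ (which lie in $\CG(d)$ and act within $\fieldproj{RS}$ by the generator computations, now applied with $RS$ in place of $S$) lets one recover enough entries; equivalently, one can sidestep the issue by instead choosing $M$ to be a product $X^aZ^b$ composed with $R$ so that a single projector of $MS$ directly exhibits the needed power of $\xi_d$ as a ratio of two of its own entries. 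Everything else is routine matrix-entry tracking using $\zeta_d = \xi_d^2 \in \fieldeproj{S}$ and closure of $\fieldeproj{S}$ under the arithmetic operations.
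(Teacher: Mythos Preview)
Your argument for the invariance $\fieldeproj{MS}=\fieldeproj{S}$ is essentially the paper's proof: reduce to the generators of \Cref{prop:Waldron}, observe that each of $X$, $Z$, $R$, and $\sqrt{d}\,F$ has entries in $\Q(\xi_d)$ so that conjugating a projector keeps its entries inside $\fieldeproj{S}$, and then use $M^{-1}$ for the reverse inclusion. The paper packages this a bit more compactly (noting once that all generators lie in $\GL_d(\Q(\xi_d))$ up to a rational scalar for $F$) rather than treating each generator's effect on entries separately, but the content is the same.

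For the compositum claim you create an unnecessary difficulty. You try to locate $\xi_d$ inside a single field $\fieldproj{RS}$, which forces you to worry about whether $\fieldproj{RS}$ also contains the ``unshifted'' entry $v_j\overline{v}_0$. The paper sidesteps this entirely: the unshifted entry $v_1\overline{v}_0$ lies in $\fieldproj{S}$ (it is the $(1,0)$-entry of the fiducial projector of $S$), the shifted entry $\xi_d^{1+d}\,v_1\overline{v}_0$ lies in $\fieldproj{RS}$, and their quotient is taken in the \emph{compositum} $\fieldproj{S}\,\fieldproj{RS}\subseteq E$. With a fiducial chosen so that $v_0v_1\neq 0$ (as arranged in the proof of \Cref{lem:fieldproj}), this immediately yields $\xi_d^{1+d}\in E$; since $1+d$ is odd when $d$ is even and $\xi_d\in\Q(\zeta_d)\subseteq\fieldproj{S}$ when $d$ is odd, one gets $\xi_d\in E$ in all cases. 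Your eventual remark ``$\xi_d \in \fieldproj{S}(\Pi\text{-entries of }RS)$'' is exactly this, so your proof is salvageable, but you should drop the attempt to put both entries into $\fieldproj{RS}$ alone and the speculative workarounds involving $X^aZ^b R$.
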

\begin{proof} 
\textit{(1).} To show $\fieldeproj{MS} = \fieldeproj{S}$, note that scalar matrices $zI$ with $\abs{z}=1$ act trivially on projections, with $(zI\bv)(zI\bv)^\ct = \bv\bv^\ct$. The matrices $X$, $Z$, and $R$ all have entries in $\Q(\xi)$ (where $\xi = \xi_d = -e^{\frac{\pi i}{d}}$). The matrix $\sqrt{d}F$ also has entries in $\Q(\xi)$, and $(F\bv)(F\bv)^\ct = \frac{1}{d}(\sqrt{d}F\bv)(\sqrt{d}F\bv)^\ct$. Writing any $M \in EC(d)$ as a finite product of the generators given in \Cref{prop:Waldron} shows that $(M\bv)(M\bv)^\ct = M\bv\bv^\ct M^\ct$ has entries in $\fieldeproj{S}$ (the compositum of $\fieldproj{S}$ and $\Q(\xi)$). This argument shows $\fieldproj{MS} \subseteq \fieldeproj{S}$, so $\fieldeproj{MS} \subseteq \fieldeproj{S}$. By considering $M^{-1}$, we also have $\fieldeproj{S} = \fieldeproj{M^{-1}MS} \subseteq \fieldeproj{MS}$, so $\fieldeproj{MS} = \fieldeproj{S}$.

\textit{(2).} To show $\fieldeproj{S}$ is equal to the compositum of all the $\fieldproj{MS}$, it suffices to show that $\xi= - e^{-\frac{\pi i}{d}}$ occurs in the compositum of two fields $\fieldproj{S}$ and $\fieldproj{RS}$ where $R$ is the element of $\CG(d)$ given in \Cref{prop:Waldron}. Given a fiducial $\bv$ generating $S=S(\bv)$ having $v_0v_1 \ne 0$, we consider the fiducial vector $\bw = R\bv$ of a geometrically equivalent SIC. It has entries $(R\bv)_j = \xi^{j(j+d)}v_j$ so $R\bv_0= v_0$ and $R\bv_1= \xi v_1$. Therefore $\Pi'= \bw \bw^{\ct}$ has
\begin{equation}
(\Pi')_{1,0} = \xi v_1 \overline{v}_0 \in \fieldproj{RS},
\end{equation}
while $v_1 \overline{v}_0 \in \fieldproj{S}$. Now $v_0v_1 \ne 0$ implies we may take their quotient in the compositum field and so get $\xi_d \in \fieldproj{S}\fieldproj{RS}$, as required.

Thus $\fieldeproj{S}$ is a well-defined invariant of the $\PEC(d)$-orbit of $S$.
\end{proof}

\section{Algebraic SICs and the Galois action}\label{sec:algebraic}
 
We now treat the subclass of algebraic SICs, for which the fields defined in \Cref{subsec:NFS} are algebraic number fields. In this paper, $\ol{\Q}$ is defined to be the algebraic closure of $\Q$ inside $\C$, so there is a canonical choice of complex conjugation automorphism $\sigma_c \in \Gal(\ol{\Q}/\Q)$, $\sigma_c(z)= \overline{z}$. We show that there is a well-defined Galois action on algebraic SICs and that the property of $S$ being an algebraic SIC is preserved by the subgroup of all automorphisms $\sigma \in \Gal(\ol{\Q}/\Q)$ that commute with $\sigma_c$ when restricted to the Galois closure of $\fieldproj{S}$.

It is conjectured for dimensions $d \ne 3$ that all Weyl--Heisenberg SICs are algebraic. In \Cref{subsec:G-WHSIC}, we define a combined geometric and Galois action on all algebraic Weyl--Heisenberg SICs and classify them into multiplets. 
 
\subsection{Algebraic SICs}\label{subsec:NFS2}

We consider SICs generated by algebraic vectors.
 
\begin{defn}\label{defn:alg-vectors}
A complex line $\C \bv$ generated by a nonzero vector $\bv \in \C^d$ is a {\em projective-algebraic line} if all the ratios $v_i/v_j \in \ol{\Q}$ (for $v_j \ne 0$). (The definition is independent of the choice of $\bv$.) A nonzero vector such that $\C\bv$ is a projective-algebraic line is called a \textit{projective-algebraic vector}.

An \textit{algebraic vector} is any $\bv \in {\ol{\Q}}^d$. For a projective-algebraic vector $\bv$, its {\em associated $\ol{\Q}$-line} is
\begin{equation}
\Qbarline{\bv} := \left\{\lambda \bv: \lambda \in \C \mbox{ and } \lambda \bv \in \ol{\Q}^{d}\right\} = \C\bv \cap \ol{\Q}^d.
\end{equation}
\end{defn}
The next result implies that the algebraic class $\Qbarline{\bv}$ of an algebraic complex line always includes algebraic unit vectors.
 
\begin{lem}\label{lem:alg-vector}
If $\bv \in \C^d$ is a projective-algebraic vector, then there exists $\lambda \in \C^{\times}$ such that $\bx = \lambda \bv$ is algebraic and $\vectornorm{\bx}=1$. The associated $\ol{\Q}$-line $\Qbarline{\bx}$ is independent of $\lambda$.
\end{lem}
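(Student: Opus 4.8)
The plan is to produce the scaling factor $\lambda$ explicitly and then check it lies in $\ol{\Q}^\times$. Since $\bv$ is a projective-algebraic vector, we may first replace $\bv$ by $\bv' := \frac{1}{v_k}\bv$ for some index $k$ with $v_k \ne 0$; by the definition of projective-algebraic vector, every entry of $\bv'$ is a ratio $v_j/v_k \in \ol{\Q}$, so $\bv' \in \ol{\Q}^d$, and $\C\bv' = \C\bv$. Thus without loss of generality I may assume $\bv$ itself is algebraic, and it remains to rescale an algebraic vector to have unit norm while staying algebraic.

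Next I would set $N := \vectornorm{\bv}^2 = \sum_{j=0}^{d-1} v_j \ol{v_j} = \bv^\ct \bv$. The key observation is that $N$ is a positive real algebraic number: each $v_j \in \ol{\Q}$, and $\ol{v_j} = \sigma_c(v_j) \in \ol{\Q}$ since $\ol{\Q}$ is closed under the complex conjugation automorphism $\sigma_c$, so $N$ is a finite sum of products of elements of $\ol{\Q}$, hence $N \in \ol{\Q}$; and $N > 0$ because $\bv \ne 0$. Then $\sqrt{N}$ is again a (positive real) algebraic number, so $\lambda := 1/\sqrt{N} \in \ol{\Q}^\times$, and $\bx := \lambda \bv \in \ol{\Q}^d$ satisfies $\vectornorm{\bx}^2 = \lambda^2 N = 1$.

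Finally, for the independence statement: any other valid choice $\lambda'$ with $\lambda' \bv$ algebraic and of unit norm must satisfy $|\lambda' \bv| = |\lambda \bv| = 1$, so $|\lambda'/\lambda| = 1$, i.e.\ $\lambda' = e^{i\theta}\lambda$ for some real $\theta$; hence $\lambda'\bv = e^{i\theta}\bx$, which lies in the same $\ol{\Q}$-equivalence class $[[\bx]]_{\alg}$ by the definition of that class (scaling by a complex $\mu$ with $\mu\bx \in \ol{\Q}^d$). Conversely every element of $[[\bx]]_{\alg}$ arises from some such choice. So $[[\bx]]_{\alg}$ does not depend on which admissible $\lambda$ was used.

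The only point requiring any care — and the "main obstacle," though it is a mild one — is the closure of $\ol{\Q}$ under complex conjugation and under taking (real positive) square roots; both are immediate since $\ol{\Q}$ is by convention the algebraic closure of $\Q$ inside $\C$ and is therefore stable under every element of $\Gal(\ol{\Q}/\Q)$, in particular $\sigma_c$, and is closed under all algebraic operations including extracting roots. Everything else is a direct computation.
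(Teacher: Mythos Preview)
Your proof is correct and follows essentially the same route as the paper: divide by a nonzero entry to land in $\ol{\Q}^d$, then divide by the (algebraic, positive real) norm. The only minor difference is in the independence step: your detour through $|\lambda'/\lambda|=1$ and $e^{i\theta}$ is unnecessary, since the paper simply observes that if $\bx=\lambda\bv$ and $\bx'=\lambda'\bv$ both lie in $\ol{\Q}^d$ with a common nonzero coordinate, then the ratio $\lambda'/\lambda$ of those coordinates is in $\ol{\Q}^\times$, which immediately gives $[[\bx]]_{\alg}=[[\bx']]_{\alg}$.
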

\begin{proof}
Let $\bv$ have some $v_i \ne 0$ and set $\mu = \frac{1}{v_i}$. Then $\bw := \mu \bv = \left(\frac{v_0}{v_i}, \frac{v_1}{v_i}, \ldots, \frac{v_{d-1}}{v_i}\right) \in \ol{\Q}^d$ by the projective-algebraicity of $\bv$. Now the complex conjugates $\sigma_c\!\left(\frac{v_k}{v_i}\right) = \frac{\ol{v}_k}{\ol{v}_i}$ are algebraic numbers, hence so is $\abs{\frac{v_k}{v_i}}^2$. Thus, $\vectornorm{\bw} = \sqrt{\sum_{k} \abs{\frac{v_k}{v_i}}^2}$ is a positive real algebraic number. We set $\lambda = \frac{\mu}{\vectornorm{\bw}}$ and obtain the required $\bx := \lambda \bv=\frac{\bw}{\vectornorm{\bw}}$, having $\vectornorm{\bx} = 1$. 
 
Finally, if $\lambda$ and $\lambda'$ give two algebraic $\bx, \bx'\!$, necessarily $\lambda'/\lambda \in \ol{\Q}$.
\end{proof}

\begin{defn}\label{defn:alg-SIC}
A $d$-dimensional line-SIC $S =\{ \bv_i: 1 \le i \le d^2\}$ is {\em algebraic} if each of its $d^2$ vectors $\bv_i$ are projective-algebraic (or equivalently, if $\fieldvec{S}$ is a finite extension of $\Q$).
\end{defn}

\begin{lem}\label{lem:alg-SIC}
Algebraic line-SICs satisfy the following properties.
\begin{itemize}
\item[(1)]
A line-SIC is algebraic if and only if, for unit vectors $\bv_i$, all the SIC projection matrices $\{ \Pi_{\bv_i}: 1 \le i \le d^2 \}$ have algebraic number entries.
\item[(2)]
For an algebraic SIC $S$, the fields $\fieldvec{S}$, $\fieldtrip{S}$, $\fieldproj{S}$ and $\fieldeproj{S}$ are algebraic number fields, that is, finite algebraic extensions of $\Q$.
\item[(3)]
A Weyl--Heisenberg line-SIC with fiducial vector $\bv$ is an algebraic SIC if and only if $\bv$ is a projective-algebraic vector. 
\end{itemize}
\end{lem}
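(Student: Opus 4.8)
I will prove the three statements in turn, leaning on the fields machinery already developed (\Cref{prop:sicfields}, \Cref{lem:fieldproj}) and on \Cref{lem:alg-vector}.

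\emph{Proof of (1).} The forward direction is immediate from the definition of ``algebraic'': if $S$ is algebraic, then each $\vv_i$ is projective-algebraic, so by \Cref{lem:alg-vector} we may rescale to a unit vector $\vv_i' \in \ol{\Q}^d$, and then the entries of $\Pi_{\vv_i'} = \vv_i'(\vv_i')^\ct$ are products of algebraic numbers with their complex conjugates, hence algebraic. For the reverse direction, suppose the $\Pi_{\vv_i}$ (for unit vectors $\vv_i$) all have algebraic entries. Then, exactly as in \Cref{rmk:322}(1), the ratios $v_{ij}/v_{ik} = (\Pi_i)_{jk}/(\Pi_i)_{kk}$ (for $v_{ik} \ne 0$) are algebraic, so each $\vv_i$ is projective-algebraic and $S$ is algebraic.

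\emph{Proof of (2).} By (1) we may represent $S$ by unit vectors whose projection matrices $\Pi_i$ have entries in $\ol{\Q}$. The field $\fieldproj{S}$ is generated over $\Q$ by finitely many such entries, hence is a finite extension of $\Q$. By \Cref{prop:sicfields}(2), $\fieldtrip{S} = \fieldinv{S} \subseteq \fieldproj{S}$, so $\fieldtrip{S}$ is also a number field. For $\fieldvec{S}$: the ratios generating it are of the form $(\Pi_i)_{jk}/(\Pi_i)_{kk} \in \fieldproj{S}$, so $\fieldvec{S} \subseteq \fieldproj{S}$ is a subfield of a number field and hence a number field. (Alternatively, $\fieldproj{S} = \fieldvec{S}\,\ol{\fieldvec{S}}$ by \Cref{prop:sicfields}(1), which also gives the finiteness both ways.)

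\emph{Proof of (3).} If the fiducial vector $\bv$ is projective-algebraic, then every vector in the Weyl--Heisenberg orbit $\sO_{\WH}(\bv) = \{H\bv : H \in \WH(d)\}$ is projective-algebraic: the matrices $X$ and $Z$ have entries in $\Q(\zeta_d) \subseteq \ol{\Q}$, and the scalar factors $\xi^r$ are roots of unity, so $H\bv$ has all entries in $\ol{\Q}\cdot\{v_j\}$, and the ratios of entries of $H\bv$ lie in $\ol{\Q}$ whenever the ratios of entries of $\bv$ do. Hence all $d^2$ vectors of $S = \WHL(\bv)$ are projective-algebraic, so $S$ is algebraic. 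Conversely, if $S$ is algebraic, then in particular $\bv$ itself (being one of the $\vv_i$, up to scalar) is projective-algebraic by definition. The main (very mild) point to be careful about here is that ``$S$ algebraic'' is a statement about the $d^2$ \emph{lines}, and one must check $\bv$ is among the representing vectors up to scalar --- which it is, since $[\bv] \in \WHL(\bv) = S$.

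The only step requiring any thought is the reverse direction of (1), i.e.\ recovering projective-algebraicity of the vectors from algebraicity of the projection entries; but this is already spelled out in \Cref{rmk:322}(1) via the identity $v_{ij}/v_{ik} = (\Pi_i)_{jk}/(\Pi_i)_{kk}$, so no real obstacle remains.
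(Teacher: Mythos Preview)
Your proof is correct and follows essentially the same approach as the paper. The only differences are stylistic: you spell out both directions of (1) explicitly (the paper simply cites \Cref{lem:alg-vector}), and in (3) you argue directly about ratios of entries of $H\bv$ rather than first invoking \Cref{lem:alg-vector} to rescale $\bv$ to an algebraic unit vector before applying the algebraic-entried matrices $H \in \WH(d)$; both routes are equally valid and equally short.
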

\begin{proof} 
\textit{(1).} 
This follows using \Cref{lem:alg-vector}.

\textit{(2).} 
That $\fieldvec{S}$ is an algebraic number field follows directly from the definition of algebraic. Given a line-SIC specified by $d^2$ projective algebraic lines $\C{\bv_i}$, using (1) we immediately have $\fieldtrip{S}$ being an algebraic number field. Furthermore, one may rescale all $\bv_i$ to be algebraic unit vectors by \Cref{lem:alg-vector}. Now the associated projection matrices $\Pi_{\bv} = \bv_i \bv_i^{\ct}$ have all entries in $\ol{\Q}$, so $\fieldproj{S}$ is an algebraic number field. Finally, $\fieldeproj{S}$ is a finite extension of $\fieldproj{S}$.

\textit{(3).} 
If the fiducial vector $\bv$ of a Weyl--Heisenberg line-SIC $S$ is projective-algebraic, then by \Cref{lem:alg-vector}, $\bv$ may be rescaled to be an algebraic unit vector $\bw = \lambda\bv$. The Weyl--Heisenberg group $\WH(d)$ consists of matrices with algebraic entries, hence its action on $\bw$ shows that all $\bw_i$ for $1 \le i \le d^2$ are algebraic, so $S$ is an algebraic line-SIC. The converse direction is immediate.
\end{proof}

\begin{lem}\label{lem:alg-SIC-PEC}
If $S$ is an algebraic Weyl--Heisenberg line-SIC, then every SIC $S'$ in its $\PEC(d)$-orbit $[S]$ is an algebraic Weyl--Heisenberg line-SIC. 
\end{lem}
\begin{proof}
The group $\PEC(d)$ is a finite group. According to \Cref{lem:alg-SIC}(3) it suffices to show that its elements map projective-algebraic fiducial vectors
to projective-algebraic fiducial vectors. The action of complex conjugation has this property, so it suffices to restrict consideration to unitaries in $\PC(d)$. This projective group has a finite cover $\widetilde{\PC}(d) \surj \PC(d)$ that is a finite matrix subgroup of $\C(d)$; this cover can be given explicitly as $\widetilde{\PC}(d) = \langle \xi_d I, X, Z, F, R \rangle$ in the notation of \Cref{prop:Waldron}. This finite cover consists of matrices with entries in $\ol{\Q}$, so each matrix in $\widetilde{\PC}(d)$ maps projective-algebraic fiducial vectors to projective-algebraic fiducial vectors.
\end{proof} 
\subsection{Galois actions on algebraic SICs}\label{subsec:G-SIC}

There is a well-defined action of $\Gal(\ol{\Q}/\Q)$ on algebraic complex lines which maps them to other algebraic complex lines. Given $\sigma \in \Gal(\ol{\Q}/\Q)$ and $\bv \in \ol{\Q}^d$, we send $\C\bv \mapsto \C \sigma(\bv) $. This action is well-defined on algebraic complex lines, since if $\lambda \in \ol{\Q}^{\times}$, then $\C \lambda \bv = \C\bv $ while
\begin{equation}
\C \sigma(\lambda\bv)
= \C \sigma(\lambda)\sigma(\bv)
= \C\sigma(\bv).
\end{equation}
Under this Galois action, the image under $\sigma$ of $d^2$ algebraic complex lines will be $d^2$ algebraic complex lines. If these algebraic lines form a line-SIC, then the image set of lines can be either the same SIC, a different SIC, or not a SIC at all.

For a particular SIC, there is a restricted set of Galois automorphisms (which may depend on the line-SIC) that map it to another line-SIC.

\begin{defn}\label{def:cent-cc}
Let $\sigma_c \in \Gal(\ol{\Q}/\Q)$ denote complex conjugation and $E$ an algebraic Galois extension of $\Q$. Let $\cent{\sigma_c}{E}$ denote the centralizer of $\sigma_c$ restricted to $\Gal(E/\Q)$, that is,
\begin{equation}
\cent{\sigma_c}{E} := \{\sigma \in \Gal(E/\Q): \, \sigma \sigma_c (\alpha)= \sigma_c \sigma(\alpha) \mbox{ for all } \alpha \in E\}.
\end{equation}
It is a subgroup of $\Gal(E/\Q)$.
\end{defn} 

\begin{rmk}\label{rmk:47}
Note that the centralizer $\cent{\sigma_c}{E}$ is equal to the normalizer $\normalizer{\sigma_c}{E}$ (because the group generated by $\sigma_c\hspace{-.25em}\mid\hspace{-.25em}E$ has order $1$ or $2$), and thus (by Galois theory) $\cent{\sigma_c}{E}$ is precisely the set of automorphisms preserving $E \cap \R$ setwise (because $E \cap \R$ is the fixed field of the group generated by $\sigma_c\hspace{-.25em}\mid\hspace{-.25em}E$).
\end{rmk}
For a algebraic line-SIC $S$, we will show there is a Galois field $E$ (depending on $S$) for which $\cent{\sigma_c}{E}$ is such a restricted set. Since neither the field $\fieldproj{S}$ nor $\fieldeproj{S}$ is known to always be a Galois extension of $\Q$, we give a name to the Galois closure of $\fieldeproj{S}$.
\begin{defn}\label{def:Galois-proj-field}
Given an algebraic line-SIC $S$, let $\Nfieldproj{S}$ denote the normal closure of $\fieldeproj{S}$ over $\Q$, and call it the {\em Galois-projection SIC field} of $S$.
\end{defn}

The next proposition shows that $\cent{\sigma_c}{\Nfieldproj{S}}$ is a group of Galois automorphisms that preserve the property of being an algebraic line-SIC. For $d \ge 2$, $\cent{\sigma_c}{\Nfieldproj{S}}$ is always nontrivial because it contains complex conjugation, which acts nontrivially because $\fieldproj{S}$ cannot be contained in $\R$. (A set of real equiangular lines in $\R^d$ has cardinality at most $\frac{d(d+1)}{2}< d^2$; see \cite{delsarte}.)

\begin{prop}\label{lem:Galois-action-SICs}
Let $S$ be an algebraic line-SIC. If $\sigma \in \cent{\sigma_c}{\Nfieldproj{S}}$, then $\sigma(S)$ is also an algebraic line-SIC, and $\Nfieldproj{\sigma(S)} = \Nfieldproj{S}$.
\end{prop}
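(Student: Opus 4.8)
The plan is to reduce the statement to two claims: first, that applying $\sigma \in \cent{\sigma_c}{\Nfieldproj{S}}$ to the $d^2$ algebraic lines of $S$ again produces a line-SIC; and second, that the Galois-projection field is preserved. For the first claim, I would argue via the triple products. By \Cref{prop:sicfields}(2) together with the discussion following \eqref{eq:structureprod}, a collection of $d^2$ lines $\{\C\bw_i\}$ in $\C^d$ is a SIC precisely when the associated normalized projections satisfy $\Tr(\Pi_i\Pi_j) = \frac{\delta_{ij}d + 1}{d+1}$; equivalently, once we know the lines are equiangular (or even just: once the Gram-type data behaves correctly), the SIC condition is a system of polynomial identities with rational coefficients in the entries of the $\Pi_i$. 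So first I would rescale each $\bv_i$ (using \Cref{lem:alg-vector}) to an algebraic unit vector, form $\Pi_i = \bv_i\bv_i^\ct$, and note all entries lie in $\Nfieldproj{S}$, which is Galois over $\Q$, so $\sigma$ permutes these entries within the same field. The key point is that $\sigma$ must be made to commute with the formation of $\Pi_i$, i.e., with complex conjugation: the entry $(\Pi_i)_{jk} = v_{ij}\overline{v_{ik}}$, and we need $\sigma\!\left((\Pi_i)_{jk}\right) = \sigma(v_{ij})\,\sigma_c(\sigma(v_{ik})) = \sigma(v_{ij})\,\overline{\sigma(v_{ik})}$, which is exactly where $\sigma \in \cent{\sigma_c}{\Nfieldproj{S}}$ is used. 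Hence $\sigma$ applied entrywise to $\Pi_i$ yields $\Pi_i' := \sigma(\bv_i)\sigma(\bv_i)^\ct$, still a rank-one Hermitian projection (the identities $\Pi_i^2 = \Pi_i$, $\Pi_i^\ct = \Pi_i$, $\Tr\Pi_i = 1$ are polynomial with rational coefficients, hence Galois-stable, and the commuting condition ensures $\sigma$ of the $\ct$-transpose is the $\ct$-transpose of $\sigma$). Applying $\sigma$ to the rational-coefficient polynomial identities $\Tr(\Pi_i\Pi_j) = \frac{\delta_{ij}d+1}{d+1}$ then shows $\{\frac{1}{d}\Pi_i'\}$ is again a SIC, so $\sigma(S)$ is a line-SIC, and it is algebraic since the $\sigma(\bv_i) \in \ol{\Q}^d$.

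For the second claim, $\Nfieldproj{\sigma(S)} = \Nfieldproj{S}$, I would first observe $\fieldproj{\sigma(S)} = \sigma\!\left(\fieldproj{S}\right)$: the entries of the projections of $\sigma(S)$ are $\sigma\!\left((\Pi_i)_{jk}\right)$ by the commuting computation above, so the field they generate over $\Q$ is $\sigma$ of the field generated by the $(\Pi_i)_{jk}$. Adjoining $\xi_d$ is $\sigma$-stable up to the ambient normal closure (since $\Q(\xi_d)/\Q$ is Galois), so $\fieldeproj{\sigma(S)} = \sigma\!\left(\fieldeproj{S}\right)$ inside $\Nfieldproj{S}$, and therefore the normal closures over $\Q$ agree: $\Nfieldproj{\sigma(S)}$ is the normal closure of $\sigma\!\left(\fieldeproj{S}\right)$, which equals the normal closure of $\fieldeproj{S}$ because $\sigma$ extends to an automorphism of any normal extension of $\Q$ containing $\fieldeproj{S}$ and normal closures are intrinsic. (One should be slightly careful that $\sigma$ is only assumed defined on $\Nfieldproj{S}$, but it can be extended to $\Gal(\ol\Q/\Q)$; the resulting $\sigma(S)$ is independent of the extension since only the action on the algebraic line is used.)

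The main obstacle I expect is not any single hard step but rather the bookkeeping needed to make the Galois action genuinely well-defined and compatible with the $\ct$ operation: one must be careful that ``$\sigma$ applied to $\Pi_i$'' really equals $\sigma(\bv_i)\sigma(\bv_i)^\ct$, and this is false in general for $\sigma \notin \cent{\sigma_c}{\Nfieldproj{S}}$ — it is precisely the centralizer condition that repairs it. A secondary subtlety is that $\fieldproj{S}$ and $\fieldeproj{S}$ are not known to be Galois over $\Q$, which is exactly why the statement is phrased using the normal closure $\Nfieldproj{S}$; working inside this fixed Galois field throughout (so that $\sigma$ permutes all relevant generators and the phrase ``$\sigma(F)$'' is unambiguous for subfields $F \subseteq \Nfieldproj{S}$) sidesteps the difficulty. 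Everything else — that SIC-ness, Hermiticity, idempotency, and trace conditions are rational polynomial identities and hence Galois-invariant — is routine once the setup is in place.
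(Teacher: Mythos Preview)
Your argument is correct and follows the same line as the paper: use the centralizer hypothesis to show that $\sigma$ applied entrywise to $\Pi_i$ is again the rank-one Hermitian projection onto $\C\sigma(\vv_i)$, then transport the rational identities $\Tr(\Pi_i\Pi_j)=\frac{\delta_{ij}d+1}{d+1}$ through $\sigma$.

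One technical point deserves care. You rescale each $\vv_i$ to an algebraic \emph{unit} vector via \Cref{lem:alg-vector}, but the individual entries $v_{ij}$ of such a vector need not lie in $\Nfieldproj{S}$ (they may involve a square root of a diagonal entry of $\Pi_i$). Your step $\sigma(v_{ij}\overline{v_{ik}})=\sigma(v_{ij})\,\overline{\sigma(v_{ik})}$ uses $\sigma\sigma_c=\sigma_c\sigma$ at $v_{ik}$ itself, not merely at the product, and extending $\sigma$ to $\overline{\Q}$ need not preserve commutation with $\sigma_c$. The paper sidesteps this by rescaling so that the entries of $\vv_i$ lie in $\fieldvec{S}\subseteq\Nfieldproj{S}$ (set one entry to $1$), where the centralizer condition applies directly; the normalization factor $\|\vv_i\|^{-2}\in\fieldproj{S}$ is handled separately. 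With that adjustment your computation goes through verbatim.

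For the field equality $\Nfieldproj{\sigma(S)}=\Nfieldproj{S}$, your route (show $\fieldeproj{\sigma(S)}=\sigma(\fieldeproj{S})$ and compare normal closures) is fine, but the paper's is shorter: since the entries of each $\sigma(\Pi_i)$ already lie in the normal field $\Nfieldproj{S}$, one gets $\Nfieldproj{\sigma(S)}\subseteq\Nfieldproj{S}$ immediately, and the reverse inclusion follows by applying the same reasoning with $\sigma^{-1}$, which also lies in the centralizer.
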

\begin{proof}
Write the line-SIC and (projection) SIC as
\begin{align}
S &= \left\{\bv_1, \bv_2, \ldots, \bv_{d^2}\right\}, \\
\PPi(S) &= \left\{\tfrac{1}{d}\Pi_1, \tfrac{1}{d}\Pi_2, \ldots, \tfrac{1}{d}\Pi_{d^2}\right\},
\end{align}
where $\bv_i$ are unit vectors. By definition of $\fieldproj{S}$, the entries of all the $\Pi_i$ lie in $\fieldproj{S}$, so they lie in $\Nfieldproj{S}$. Also, using the commutativity of $\sigma$ and $\sigma_c$, 
\begin{equation}
\sigma(\Pi_i)
= \sigma(\bv_i\sigma_c(\bv_i)^\top)
= \sigma(\bv_i)\sigma(\sigma_c(\bv_i)^\top)
= \sigma(\bv_i)\sigma_c(\sigma(\bv_i)^\top) 
\end{equation}
is the Hermitian projection onto $\sigma(\bv_i)$, hence
\begin{equation}
\PPi(\sigma(S)) = \left\{\tfrac{1}{d}\sigma(\Pi_1), \tfrac{1}{d}\sigma(\Pi_2), \ldots, \tfrac{1}{d}\sigma(\Pi_{d^2})\right\}.
\end{equation}
Moreover,
\begin{align}
\Tr(\sigma(\Pi_i)\sigma(\Pi_j))
= \sigma(\Tr(\Pi_i\Pi_j))
= \sigma\!\left(\tfrac{\delta_{ij}d+1}{d+1}\right)
= \tfrac{\delta_{ij}d+1}{d+1},
\end{align}
certifying that $\PPi(\sigma(S))$ is a SIC. Finally, $\Nfieldproj{\sigma(S)} \subseteq \Nfieldproj{S}$, because the entries of $\Pi_i$ lie in $\Nfieldproj{S}$, and $\Nfieldproj{S}$ is Galois over $\Q$, hence $\sigma(\Nfieldproj{S}= \Nfieldproj{S}$. It follows by symmetry that $\Nfieldproj{S} = \Nfieldproj{\sigma^{-1}(\sigma(S))} \subseteq \Nfieldproj{\sigma(S)}$ and thus $\Nfieldproj{\sigma(S)} = \Nfieldproj{S}$.
\end{proof}

\subsection{Galois multiplets and super-equivalence classes of Weyl--Heisenberg SICs}\label{subsec:G-WHSIC}

We now assemble $\PEC(d)$-orbits of Weyl--Heisenberg SICs in dimension $d$ into \textit{multiplets} under the Galois action by the subgroup $\cent{\sigma_c}{\Nfieldproj{S}}$ of $\Gal(\Nfieldproj{S}/\Q)$ that commutes with complex conjugation. The coarser equivalence classes obtained under the combined geometric and Galois actions (i.e., unions of the orbits in a multiplet) will be called \textit{Galois super-equivalence classes}.

The conjecture of Appleby, Flammia, McConnell, and Yard \cite[Conj.~2]{appleby1} implies that many of the fields we have been studying are equivalent Galois extensions of $\Q$.

\begin{lem}\label{lem:conj11-consequence}
Assume \Cref{conj:AFMY20} holds. Then in every dimension $d \ge 4$, for each Weyl--Heisenberg line-SIC $S$, the field $\fieldvec{S}$ is a finite Galois extension of $\Q$, and 
\begin{equation}
\fieldvec{S} = \fieldproj{S} = \fieldeproj{S} = \Nfieldproj{S}.
\end{equation} 
\end{lem}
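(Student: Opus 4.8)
The plan is to establish the four-term chain of equalities by climbing the tower \eqref{eqn:field-inclusions} one inclusion at a time, invoking \Cref{conj:AFMY21} only where it is genuinely needed. Assertion (2) of \Cref{conj:AFMY21} hands us, for any Weyl--Heisenberg line-SIC $S$ in dimension $d\ge 4$, two facts: $\fieldvec{S}$ is a finite Galois extension of $\Q$, and $H_{d'\infty_1\infty_2}\subseteq\fieldvec{S}$. The first equality, $\fieldvec{S}=\fieldproj{S}$, is then immediate: a finite Galois (hence normal) subfield of $\C$ is stable under complex conjugation, so $\overline{\fieldvec{S}}=\fieldvec{S}$, and \Cref{prop:sicfields}(1) gives $\fieldproj{S}=\fieldvec{S}\,\overline{\fieldvec{S}}=\fieldvec{S}$ --- this is exactly \Cref{lem:normal-conditions}(1).

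For the equality $\fieldproj{S}=\fieldeproj{S}$, since $\fieldeproj{S}=\fieldproj{S}(\xi_d)$ it suffices to show $\xi_d\in\fieldproj{S}$. The number $\xi_d=-e^{\pi i/d}$ is a primitive $d'$-th root of unity, so $\xi_d\in\Q(\zeta_{d'})$, and I would show $\Q(\zeta_{d'})\subseteq H_{d'\infty_1\infty_2}$. For this, note that the compositum $K_d(\zeta_{d'})$ is an abelian extension of $K_d$ that is unramified outside the primes above $d'$ (because $\Q(\zeta_{d'})/\Q$ is), and at whose two archimedean places ramification occurs only because $\zeta_{d'}\notin\R$; since the conductor of $\Q(\zeta_{d'})/\Q$ is $d'\infty$ and conductors do not increase under base change apart from newly-ramified infinite places, the conductor of $K_d(\zeta_{d'})/K_d$ divides $d'\OO_{K_d}\cdot\infty_1\infty_2$. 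Class field theory then gives $K_d(\zeta_{d'})\subseteq H_{d'\infty_1\infty_2}$, hence $\Q(\zeta_{d'})\subseteq H_{d'\infty_1\infty_2}\subseteq\fieldvec{S}=\fieldproj{S}$, so $\fieldeproj{S}=\fieldproj{S}$. (When $d$ is odd this also follows directly from \Cref{lem:fieldproj}, since $\xi_d$ is then a power of $\zeta_d\in\fieldproj{S}$; it is only the case of even $d$ that truly needs the conjecture.)

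Finally, $\fieldeproj{S}=\Nfieldproj{S}$: by the two steps above $\fieldeproj{S}=\fieldvec{S}$ is already Galois over $\Q$, so it equals its own normal closure, which is $\Nfieldproj{S}$ by definition. Chaining the equalities yields $\fieldvec{S}=\fieldproj{S}=\fieldeproj{S}=\Nfieldproj{S}$, a finite Galois extension of $\Q$ by \Cref{conj:AFMY21}(2). Everything here is formal except the cyclotomic containment $\Q(\zeta_{d'})\subseteq H_{d'\infty_1\infty_2}$ in the second step --- a routine but not entirely trivial conductor computation under base change --- which is the step I expect to require the most care; it could alternatively be cited from the treatment of ray class fields in \Cref{sec:81}.
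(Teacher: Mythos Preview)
Your proposal is correct and follows essentially the same route as the paper's proof: first use Galois-ness of $\fieldvec{S}$ and closure under complex conjugation to get $\fieldvec{S}=\fieldproj{S}$ via \Cref{lem:normal-conditions}(1), then use the containment $\Q(\zeta_{d'})\subseteq H_{d'\infty_1\infty_2}\subseteq\fieldvec{S}$ to obtain $\xi_d\in\fieldproj{S}$ and hence $\fieldproj{S}=\fieldeproj{S}$, and finally note that a Galois field equals its own normal closure. The paper treats the cyclotomic containment in one line (``this class field must contain the class field of $\Q$ with conductor $d'\infty$''), whereas you spell out the conductor argument, but the logical structure is identical.
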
 
\begin{proof}
By \Cref{conj:AFMY20}(2), $\fieldvec{S}$ is a Galois extension of $\Q$. Consequently, $\fieldvec{S}$ is preserved setwise by complex conjugation, so $\fieldvec{S}= \fieldproj{S}$ holds by \Cref{lem:normal-conditions}(1).

Also by \Cref{conj:AFMY20}(2), $\fieldvec{S}$ always contains the ray class field $H_{d'\infty_1\infty_2}$ of $\Q(\Delta_d)$. This class field must contain the class field of $\Q$ with conductor $d'\infty$, which is $\Q(\zeta_{d'}) = \Q(\xi_d)$. Therefore, $\fieldvec{S}$ contains $\fieldproj{S}(\xi_d) = \fieldeproj{S}$. Since $\fieldvec{S} \subseteq \fieldproj{S} \subseteq \fieldeproj{S}$, equality of those three fields follows. So $\fieldeproj{S}$ is Galois and hence equal to $\Nfieldproj{S}$.
\end{proof} 

The evidence for a Galois action on Weyl--Heisenberg SICs was extensively detailed by Appleby, Yadsan-Appleby, and Zauner \cite{AYZ13} in 2013. The group $\Gal(\overline{\Q}/\Q)$ acts on the elements of Weyl--Heisenberg group $\WH(d)$ coordinatewise on its matrices. Any automorphism $\sigma: \overline{\Q} \to \overline{\Q}$, sending a matrix $H \mapsto \sigma(H)$, acts as a homomorphism $\sigma(H_1H_2) = \sigma(H_1) \sigma(H_2)$. This Galois action fixes the group $\WH(d)$ as a set, with any automorphism permuting its elements, since every Galois conjugate $\sigma(H)$ of a group element $H$ is also an element of $\WH(d)$.

By \Cref{prop:215}(2), $\fieldeproj{S}$ is an invariant of the $\PEC(d)$ orbit of the Weyl--Heisenberg SIC $S$, hence so is its Galois closure $\Nfieldproj{S}$. Thus the field $\Nfieldproj{S}$ is invariant under both the geometric and the Galois actions.

The following definition differs slightly from some prior usage of the term ``Galois multiplet''; see \Cref{rmk:413}.

\begin{defn}\label{defn:gal-multiplet}
For $d \ge 4$, the {\em Galois multiplet} (or simply \textit{multiplet}) of a $\PEC(d)$-orbit $[S]$ of algebraic Weyl--Heisenberg line-SICs is
\begin{equation}
[[S]] = \{[\sigma(S)] : \sigma \in \cent{\sigma_c}{\Nfieldproj{S}}\}.
\end{equation}
We also define an associated equivalence relation $\approx_{\GPEC}$, termed \textit{Galois super-equivalence}, by 
\begin{equation}
[S_1] \approx_{\GPEC} [S_2] \iff [[S_1]] = [[S_2]],
\end{equation}
and term the union of the elements $[S_i]$ of $[[S]]$ the \textit{Galois super-equivalence class of $S$}.
\end{defn}

This definition implies that each $[[S]]$ is a disjoint union of $\PEC(d)$-orbits $[S]$ because $\cent{\sigma_c}{\Nfieldproj{S}}$ is a group; all SICs in $[S]$ are algebraic by \Cref{lem:alg-SIC-PEC}.

We summarize the properties noted above in the following proposition.
\begin{prop} \label{prop:413}
Assume \Cref{conj:AFMY20} holds. Then, for $d \ge 4$, the set $\WHSIC{d}$ is partitioned into a finite number of Galois super-equivalence classes under the equivalence relation $\approx_{\GPEC}$. Each Galois super-equivalence class is a finite union of geometric equivalence classes.
\end{prop}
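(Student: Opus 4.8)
The plan is to deduce \Cref{prop:413} directly from the two facts already established: (i) \Cref{prop:215} shows that $\fieldeproj{S}$ is an invariant of the $\PEC(d)$-orbit $[S]$, and assuming \Cref{conj:Galois-fields1} it is a finite Galois extension of $\Q$; and (ii) \Cref{lem:Galois-action-SICs} (applied with $E = \fieldeproj{S} = \Nfieldproj{S}$, which under \Cref{conj:Galois-fields1} are equal) shows that the elements of $\cent{\sigma_c}{\fieldeproj{S}}$ send algebraic line-SICs to algebraic line-SICs with the same extended projection field. First I would observe that \Cref{conj:Galois-fields1} guarantees every Weyl--Heisenberg line-SIC $S$ in dimension $d \neq 3$ is algebraic (by \Cref{lem:alg-SIC}(3)), so that the Galois action of \Cref{subsec:G-SIC} applies to every such $S$ and the definition of $[[S]]$ in \Cref{defn:gal-multiplet} makes sense.

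Next I would verify that $\approx_{\GPEC}$ is genuinely an equivalence relation and that $[[S]]$ is well-defined. Reflexivity and the fact that $[[S]]$ contains $[S]$ follow since $\sigma_c \in \cent{\sigma_c}{\fieldeproj{S}}$ (indeed $\mathrm{id}$ is in there too). For symmetry and transitivity the key point is that if $\sigma \in \cent{\sigma_c}{\fieldeproj{S}}$, then by \Cref{lem:Galois-action-SICs} $\fieldeproj{\sigma(S)} = \Nfieldproj{\sigma(S)} = \Nfieldproj{S} = \fieldeproj{S}$, so the group $\cent{\sigma_c}{\fieldeproj{\sigma(S)}}$ used to build $[[\sigma(S)]]$ is literally the same group $\cent{\sigma_c}{\fieldeproj{S}}$; since this is a group, $\sigma^{-1}$ lies in it, giving $[S] \in [[\sigma(S)]]$, and composing two such automorphisms stays in the group, giving transitivity. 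Hence the Galois super-equivalence classes are exactly the unions of $\PEC(d)$-orbits lying in a common multiplet, and each such class is a union of $\PEC(d)$-orbits.

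For finiteness of the number of super-equivalence classes, I would argue as follows. Each super-equivalence class is a union of finitely many $\PEC(d)$-orbits, namely the orbit of $[S]$ under the finite group $\cent{\sigma_c}{\fieldeproj{S}}$ (finite because $\fieldeproj{S}/\Q$ is a finite extension under \Cref{conj:Galois-fields1}), so each super-equivalence class is itself a finite union of $\PEC(d)$-orbits — this gives the second sentence of the proposition. For the first sentence, it suffices that the set $\WHSIC{d}/\PEC(d)$ of geometric equivalence classes is finite for $d \neq 3$. This is precisely \Cref{conj:WHB}/\Cref{conj:WHAC}, which is \emph{not} among the hypotheses of the proposition — so the main obstacle is that \textbf{finiteness of the number of super-equivalence classes does not follow from \Cref{conj:Galois-fields1} alone}, and I expect the intended reading is that \Cref{conj:Galois-fields1} is being invoked together with the standing boundedness/algebraicity conjecture (or that ``finite number'' should be read as ``the super-equivalence classes partition $\WHSIC{d}$, each being a finite union of geometric classes''). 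I would resolve this by explicitly noting in the proof that finiteness of the set of geometric equivalence classes $\WHSIC{d}/\PEC(d)$ is equivalent to \Cref{conj:WHB} (via \Cref{conj:WHAC}), and that under that additional hypothesis there are finitely many $\PEC(d)$-orbits hence a fortiori finitely many coarser super-equivalence classes; the remaining content — that $\approx_{\GPEC}$ is an equivalence relation with classes that are finite unions of $\PEC(d)$-orbits — follows unconditionally from \Cref{conj:Galois-fields1} via the argument above.
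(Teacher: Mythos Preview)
Your approach is essentially the same as the paper's (the paper gives no explicit proof, presenting the proposition as a summary of the preceding discussion), and your identification of the key inputs---\Cref{prop:215}, \Cref{lem:Galois-action-SICs}, and the Galois-invariance of $\WH(d)$ as a set---is correct.

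However, the ``main obstacle'' you flag is a misreading of \Cref{conj:Galois-fields1}. That conjecture explicitly includes the assertion that the $d^2$ vectors of every Weyl--Heisenberg line-SIC are projective-algebraic; this is precisely the content of \Cref{conj:WHAC}, and the paper states immediately after \Cref{conj:WHAC} that it is \emph{equivalent} to \Cref{conj:WHB} (finiteness of $\WHSIC{d}$ for $d\neq 3$). The equivalence is a standard real-algebraic-geometry observation: the fiducial locus is a real algebraic variety, and if it had positive dimension it would contain transcendental points, contradicting universal algebraicity. Hence finiteness of $\WHSIC{d}/\PEC(d)$, and a fortiori of the set of Galois super-equivalence classes, \emph{is} a consequence of the stated hypothesis---no additional assumption is needed, and your proposed caveat should be removed.
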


Galois multiplets in a fixed dimension $d$ may have different sizes. For example, in the case $d=35$ illustrated in \Cref{fig:35a} and \Cref{fig:35b}, the ten $\PEC(35)$-orbits labeled 
\begin{equation}
\{35a, 35b, 35c, 35d, 35e, 35f, 35g, 35h, 35i, 35j\}
\end{equation}
form six Galois multiplets labeled 
\begin{equation}
\{35bcdg, 35af, 35e, 35h, 35i, 35j\}.
\end{equation}

\begin{rmk}\label{rmk:413}
\Cref{defn:gal-multiplet} defines a ``Galois multiplet'' as a potentially smaller set $[[S]]$ than that defined by same term in some previous work. The term ``Galois multiplet'' was defined in \cite[p.~1051]{appleby2} and \cite[Sec.~14.22]{waldron18} to refer to the set of all geometric equivalence classes $[S]$ having a fixed associated SIC field $\E(S)= \fieldeproj{S}$. In particular, the term ``minimal multiplet'' was used in \cite[Sec.~6]{appleby2} and \cite[Sec.~14.25]{waldron18} to refer to the set of all geometric equivalence classes $[S]$ having the minimal SIC field, which is the field $H_{d' \infty_1\infty_2}$ mentioned in \Cref{conj:AFMY20}(2). For the purpose of this remark, we refer to the set of all geometric equivalence class $[S]$ having a fixed $\fieldeproj{S}$ as a \textit{field multiplet}.
 
Assuming \Cref{conj:AFMY20}(2), all elements in a Galois multiplet $[[S]]$ have the same SIC field. However, it is a priori possible that two different Galois multiplets $[[S]]$ have the same associated SIC field. Thus, field multiplets are finite unions of Galois multiplets. Grassl \cite{grasslpersonal2} has found examples of field multiplets consisting of two distinct Galois multiplets. This phenomenon is studied in \Cref{sec:degeneration}, whose results imply that the a field multiplet is always a union of either one or two Galois multiplets. According to \Cref{thm:99}(c), cases of two Galois multiplets arise precisely for $d$ with $\Delta_d= f^2 \Delta_0$ having fundamental discriminant $\Delta_0 \equiv 1 \Mod{8}$.

For the rest of the paper the terms ``Galois multiplet,'' ``multiplet,'' and ``minimal multiplet'' follow \Cref{defn:gal-multiplet} without further comment.
\end{rmk}

\section{Conjectures on counting geometric equivalence classes and Galois multiplets of Weyl--Heisenberg SICs}\label{sec:5}

In this section, we formulate two conjectures relating the number of SICs in dimensions $d \ge 4$ to class numbers of certain real quadratic orders. Both these conjectures are numerically testable, and they are consequences of \Cref{conj:sic-to-order0}.

\subsection{Class groups and class monoids of orders}\label{subsec:51}

Let $K$ be a number field. An \textit{order} in $K$ is any subring $\OO$ including $1$ that is finitely additively generated and whose rational span $\Q\OO = K$. There is a unique \textit{maximal order} $\OO_K$ of $K$ that contains all the others; $\OO_K$ coincides with the set of algebraic integers in $K$. The maximal order is a Dedekind domain, while every non-maximal order is not a Dedekind domain. In particular, a non-maximal order will have non-invertible nonzero ideals. However, the set of \textit{invertible} ideals of a non-maximal order behaves much like set of nonzero ideals in a Dedekind domain.

Recall that a {\em fractional ideal} of a (commutative) integral domain $\OO$ is a finitely generated $\OO$-module $\mm$ contained in its quotient field $K$. A fractional ideal $\aa$ is {\em invertible} if there exists a fractional ideal $\bb$ with $\aa \bb = \OO$. Dedekind domains are characterized by the property that all nonzero fractional ideals are invertible. 

The class group of an order, defined in terms of invertible fractional ideals, generalizes the usual notion of the class group of (the maximal order of) a number field.

\begin{defn}\label{defn:classgroup}
The \textit{class group} (also called the \textit{Picard group} or \textit{ring class group}) of an order $\OO$ is
\begin{equation}
\Cl(\OO) := \frac{\{\mbox{invertible fractional ideals of } \OO\}}{\{\mbox{principal fractional ideals of } \OO\}}.
\end{equation}
\end{defn}

If one instead considers \textit{all} nonzero ideals, one obtains a monoid (that is, a semigroup with identity) rather than a group.

\begin{defn}\label{defn:classmonoid}
The \textit{class monoid} of an order $\OO$ is
\begin{equation}
\ClM(\OO) := \frac{\{\mbox{nonzero fractional ideals of } \OO\}}{\{\mbox{principal fractional ideals of } \OO\}}.
\end{equation}
\end{defn}

For a more in-depth discussion of the properties of non-maximal orders, see \cite{kopplagarias} and \cite[Ch.~I, Sec.~12]{Neukirch13}.

If $K$ is a quadratic field of discriminant $\Delta_0$, it has a unique suborder of discriminant $f^2\Delta_0$ for every positive integer $f$. This order is given explicitly as
\begin{equation}
\OO_{\!\Delta} 
:= \Z\!\left[\frac{\Delta+\sqrt{\Delta}}{2}\right] 
= \Z + \frac{\Delta+\sqrt{\Delta}}{2}\Z
= \Z + f\frac{\Delta_0+\sqrt{\Delta_0}}{2}\Z.
\end{equation}

A {\em fundamental discriminant} $\Delta_0$ is any nonsquare integer equal to the field discriminant of $\Q(\sqrt{\Delta_0})$. Equivalently, $\Delta_0$ is a fundamental discriminant if and only if $\Delta_0 \equiv 0,1 \Mod{4}$, $16\!\nmid\!\Delta_0$, and the odd part of $\Delta_0$ squarefree. Any discriminant $\Delta$ of a quadratic order $\OO$ has form $\Delta = f^2 \Delta_0$ for a unique $\Delta_0$ and $f \ge 1$.

For the SIC problem, we specifically consider the case when $\Delta = \Delta_d = (d+1)(d-3)$ for some integer $d$. In this case, the class monoid $\ClM(\OO_{\!\Delta_d})$ may be understood through bijections with other sets of number-theoretic interest.

\begin{thm}\label{thm:equiv}
Let $\Delta$ be a non-square discriminant. Write $\Delta = f^2\Delta_0$ for some $f \in \N$ and a fundamental discriminant $\Delta_0$. Then there are explicit, canonical bijections between the sets
\begin{itemize}
\item[(0)] $\ClM(\OO_\Delta)$;
\item[(1)] the disjoint union $\bigsqcup_{f'|f} \Cl(\OO_{(f')^2\Delta_0})$; and
\item[(2)] the set $\QQ(\Delta)/\GL_2(\Z)$, where $\QQ(\Delta)$ is the set of 
integral binary quadratic forms of discriminant $\Delta$, and $\rA = \smmattwo{s}{t}{u}{v} \in \GL_2(\Z)$ acts on the right by 
the determinant-twisted group action 
$Q^{\rA}(x,y) = (\det{\rA})\,Q(sx+ty,ux+vy)$.
\end{itemize}
Now suppose that $\Delta = \Delta_d = (d+1)(d-3)$ for a positive integer $d$. Then the sets (0), (1), and (2) also have explicit, canonical bijections with
\begin{itemize}
\item[(3)] the set $\CC_{d-1} = \{\{\rR^{-1}\rA\rR : \rR \in \GL_2(\Z)\} : \rA \in \SL_2(\Z), \Tr(\rA)=d-1\}$ of $\GL_2(\Z)$-conjugacy class of elements of $\SL_2(\Z)$ of trace $d-1$.
\end{itemize}
\end{thm}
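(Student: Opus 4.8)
The plan is to establish a cycle of bijections $(0)\leftrightarrow(2)$, $(2)\leftrightarrow(3)$, and $(1)\leftrightarrow(2)$ (or $(1)\leftrightarrow(0)$ directly), composing the classical dictionary between quadratic forms, ideals, and $\SL_2(\Z)$-conjugacy classes with the refinements needed to handle non-maximal orders and non-invertible ideals. The only genuinely new ingredient beyond textbook material is the careful bookkeeping of the non-invertible part of the class monoid, which is where the index $f'\div f$ in (1) comes from.

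\medskip

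\textbf{Step 1: $(0)\leftrightarrow(2)$.} I would first recall the standard correspondence between (not necessarily primitive) binary quadratic forms of discriminant $\Delta$ and fractional $\OO_{\!\Delta'}$-ideals for suborders $\OO_{\!\Delta'}\subseteq\OO_{\!\Delta}$. A form $Q(x,y)=ax^2+bxy+cy^2$ of discriminant $\Delta=b^2-4ac$ has a content $m=\gcd(a,b,c)$ and a primitive part $Q/m$ of discriminant $\Delta/m^2$; the primitive form of discriminant $\Delta/m^2 = (f/f')^2\Delta_0 \cdot (\text{stuff})$ corresponds to an invertible ideal class in the order of that discriminant. The twisted $\GL_2(\Z)$-action (with the $\det$ twist) is exactly the action that makes proper and improper equivalence collapse to the same orbit, so that $\QQ(\Delta)/\GL_2(\Z)$ becomes a \emph{set} rather than carrying a group structure — matching the class monoid, which is only a monoid. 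Concretely, to a form $Q=ax^2+bxy+cy^2$ with $a>0$ one attaches the $\Z$-module $\aa_Q = \Z a + \Z\frac{-b+\sqrt\Delta}{2}$, which is a fractional ideal of the order $\OO = \{\lambda\in K : \lambda\aa_Q\subseteq\aa_Q\}$, its multiplier ring; this $\OO$ has discriminant $\Delta/m^2$ where $m = \gcd(a,b,c)$. I would check this is well-defined on $\GL_2(\Z)$-orbits and set up the inverse map (choosing an oriented $\Z$-basis of an ideal and reading off the norm form). The subtlety is that a non-invertible ideal of $\OO_{\!\Delta}$ is invertible when regarded as an ideal of its (larger) multiplier ring, and this is precisely the decomposition indexing (1).

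\medskip

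\textbf{Step 2: $(2)\leftrightarrow(3)$.} For this I would use the conjugacy-class interpretation: given $\rA\in\SL_2(\Z)$ with $\Tr(\rA)=d-1$, the characteristic polynomial is $\lambda^2-(d-1)\lambda+1$, whose roots are $\varepsilon_d^{\pm1}$ with $\varepsilon_d = \frac12(d-1+\sqrt{\Delta_d})$, so $\Z[\rA]\cong\Z[\varepsilon_d] = \OO_{\!\Delta_d}$ (using $\Delta_d=(d-1)^2-4$). The module $\Z^2$ with the $\OO_{\!\Delta_d}$-action via $\rA$ is a fractional ideal class, and $\GL_2(\Z)$-conjugacy of matrices corresponds exactly to the twisted $\GL_2(\Z)$-equivalence of the associated forms/ideals; the $\det$-twist appears because conjugating by an element of $\GL_2(\Z)\setminus\SL_2(\Z)$ reverses orientation. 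This is the classical Latimer–MacDuffee / Gauss correspondence (conjugacy classes of integer matrices with given irreducible characteristic polynomial $\leftrightarrow$ ideal classes of $\Z[\lambda]$), here in the version that allows all of $\ClM$ rather than just $\Cl$, since $\Z^2$ as an $\OO_{\!\Delta_d}$-module need not be a \emph{proper} (invertible) ideal. I would need to verify the trace-$d-1$ condition matches discriminant $\Delta_d$ and that reducibility (eigenvalue $\pm1$, i.e. $d=3$ or $d=-1$) is exactly the excluded case $d\in\{-1,3\}$.

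\medskip

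\textbf{Step 3: $(1)\leftrightarrow$ the others, and assembly.} Finally, $\bigsqcup_{f'\div f}\Cl(\OO_{(f')^2\Delta_0})$ maps into $\ClM(\OO_{\!\Delta})$ by sending an invertible class of the intermediate order $\OO'=\OO_{(f')^2\Delta_0}$ to its class as an $\OO_{\!\Delta}$-fractional ideal (every fractional $\OO'$-ideal is a fractional $\OO_{\!\Delta}$-ideal, since $\OO_{\!\Delta}\subseteq\OO'$, wait — in fact $\OO_{\!\Delta}=\OO_{f^2\Delta_0}\subseteq\OO_{(f')^2\Delta_0}=\OO'$ when $f'\div f$). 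The content of the bijection is: every nonzero fractional $\OO_{\!\Delta}$-ideal $\aa$ has a unique multiplier ring $\OO'$ among the intermediate orders, and $\aa$ is an invertible fractional $\OO'$-ideal; conversely distinct invertible $\OO'$-classes stay distinct and come from distinct orders. I expect Step 1's content–vs–multiplier-ring analysis to be the main obstacle: one must show that every divisor $f'$ of $f$ actually arises as a multiplier-ring conductor (surjectivity of the stratification) and that the class-monoid identification is compatible with the stratification by multiplier ring, i.e. that the "quotient by principal ideals" in the monoid restricts correctly to each stratum. This requires the structure theory of modules over orders in quadratic fields (e.g. that a fractional ideal with multiplier ring $\OO'$ is automatically a proper $\OO'$-ideal, which is special to the quadratic/Gorenstein setting). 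Once the stratification is in place, the bijections $(0)=(1)$ and the compatibility with $(2)$, $(3)$ follow by tracking the content of the form, and the map $\MM$ alluded to elsewhere in the paper is then read off by sending $f'\mapsto\Cl(\OO_{(f')^2\Delta_0})$.
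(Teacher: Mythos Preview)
Your proposal is correct and structurally parallel to the paper's proof. For $(0)\leftrightarrow(1)$ and $(0)\leftrightarrow(2)$ the paper simply cites \cite[Prop.~A.12, A.4]{kopplagarias} and \cite[Thm.~1.2]{woodgauss}, so your Step~1 and Step~3 actually supply more of the underlying argument than the paper does; your identification of the key technical point (that quadratic orders are Gorenstein, so every fractional ideal is invertible over its own multiplier ring) is exactly what makes the stratification $(0)=(1)$ work.

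The one genuine methodological difference is $(2)\leftrightarrow(3)$. You route this through Latimer--MacDuffee: regard $\Z^2$ as a $\Z[\rA]\cong\OO_{\!\Delta}$-module, recover a fractional ideal class, and then compose with the form--ideal dictionary. The paper instead writes down explicit mutually inverse maps directly between forms and matrices,
\[
\phi(ax^2+bxy+cy^2)=\begin{pmatrix}\tfrac{b+d-1}{2}&-a\\ c&\tfrac{-b+d-1}{2}\end{pmatrix},\qquad
\psi\!\begin{pmatrix}s&t\\u&v\end{pmatrix}(x,y)=-tx^2+(s-v)xy+uy^2,
\]
and verifies by a two-line computation that $\phi(Q^{\rR})=\rR^{-1}\phi(Q)\rR$, $\psi(\rR^{-1}\rA\rR)=\psi(\rA)^{\rR}$, and $\ol\phi\circ\ol\psi=\ol\psi\circ\ol\phi=\id$. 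Your route explains \emph{why} the bijection exists and naturally singles out $d\in\{-1,3\}$ as the degenerate (repeated-eigenvalue) case; the paper's route is shorter, requires no module theory, and makes the word ``explicit'' in the theorem statement literal. Either is fine here. (One small aside: your final sentence about the map $\MM$ conflates two things---$\MM$ in the paper goes from conductors to \emph{SIC multiplets}, not to class groups---but this is irrelevant to the theorem at hand.)
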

\begin{proof}
The bijection between (0) and (1) holds because, for any quadratic discriminant $\Delta$ one has, by Halter-Koch \cite[Sec.~5.5]{HK:13}, the equality of sets $\ClM(\OO_\Delta) = \bigsqcup_{f'|f} \Cl(\OO_{(f')^2\Delta_0})$.

The bijection between (0) and (2) is a variant of Gauss composition and is proven as \cite[Thm.~1.2]{woodgauss}; for an introduction to Gauss composition, see \cite[Ch.~5]{cohencourse}. Note that quadratic forms $Q(x,y) = ax^2+bxy+cy^2$ with $\gcd(a,b,c) > 1$ are included in $\QQ(\Delta)$.

We now exhibit a bijection between (2) and (3) that specifically holds only for $\Delta=\Delta_d$. It is defined by a map $\phi: \QQ(\Delta) \to \GL_2(\Z)$ that induces the bijection. For a quadratic form $Q(x,y) = ax^2+bxy+cy^2$ of discriminant $\Delta_d = (d+1)(d-3)$, define $\phi(Q) := \smmattwo{\foh(b+d-1)}{-a}{c}{\foh(-b+d-1)}$. Note that 
\begin{equation}
\det(\phi(Q)) = \frac{1}{4}(-b^2+(d-1)^2)+ac = \frac{1}{4}(-\Delta+(d-1)^2) = 1
\end{equation}
and $\Tr(\phi(Q)) = d-1$. It is straightforward to check that $\phi(Q^\rR) = \rR^{-1}\phi(Q)\rR$ for any $\rR \in \GL_2(\Z)$. Thus, $\phi$ defines a map $\ol{\phi} : \QQ(\Delta)/\GL_2(\Z) \to \CC_{d-1}$. In the other direction, for $\rA = \smmattwo{s}{t}{u}{v} \in \SL_2(\Z)$, define the quadratic form 
$(\psi(\rA))(x,y) := -tx^2 + (s-v)xy + uy^2$. It is straightforward to check that $\psi(\rR^{-1}\rA\rR) = \psi(\rA)^\rR$ for $R \in \GL_2(\Z)$ and, if $\Tr(A)=d-1$, then $\disc(\psi(\rA)) = \Delta$. Thus $\psi$ defines a map $\ol{\psi} : \CC_{d-1} \to \QQ(\Delta)/\GL_2(\Z)$. A direct calculation shows $\ol\phi \circ \ol\psi = \id$ and $\ol\psi \circ \ol\phi = \id$.
\end{proof}

We obtain: 
\begin{cor}\label{cor:54}
Suppose $d$ is a positive integer, and let $\Delta = \Delta_d = (d+1)(d-3)$. Write $\Delta = f^2\Delta_0$ for some $f \in \N$ and a fundamental discriminant $\Delta_0$.
The quantity $\abs{\ClM(\OO_\Delta)}$ is equal to
\begin{itemize}
\item[(1)] the sum of (ring) class numbers $\sum_{f'|f} \hcn{(f')^2\Delta_0}$, where $\hcn{(f')^2\Delta_0} = \abs{\Cl(\OO_{(f')^2\Delta_0})}$;
\item[(2)] $\abs{\QQ(\Delta)/\GL_2(\Z)}$, the number of twisted $\GL_2(\Z)$-classes of binary quadratic forms of discriminant $\Delta$;
\item[(3)] $\abs{\CC_{d-1}}$, the number of $\GL_2(\Z)$-conjugacy class of elements of $\SL_2(\Z)$ of trace $d-1$.
\end{itemize}
\end{cor}

\subsection{Statement of the Geometric Count Conjecture}\label{subsec:52}

The following conjecture counts the number of geometric equivalence classes of Weyl--Heisenberg SICs in dimension $d$. It is implied by \Cref{conj:sic-to-order0}.

\begin{conj}[Geometric Count Conjecture]\label{conj:count}
Fix a positive integer $d \neq 3$, let $\Delta_d = (d+1)(d-3)$, and let $\OO_{\Delta_d}$ be the quadratic order of discriminant $\Delta_d$. Then, the number of geometric equivalence classes $[S]$ of Weyl--Heisenberg covariant SICs is
\begin{equation}
\abs{{\rm WHSIC}(d)/\PEC(d)} = \abs{\ClM(\OO_{\Delta_d})}.
\end{equation}
\end{conj}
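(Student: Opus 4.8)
The plan is not to prove this unconditionally — the statement subsumes \Cref{conj:strongZ} (a Weyl--Heisenberg SIC exists in every dimension, since $\ClM(\OO_\Delta)$ is always nonempty) and \Cref{conj:WHB} (finiteness of their number for $d \ne 3$) — but rather to deduce it from the finer structural conjectures of the paper and then to corroborate it against the catalogued SIC data.

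First I would reduce the count to \Cref{conj:sic-to-order0}. That conjecture provides a bijection $\MM$ from the positive divisors $f'$ of the conductor $f$ (where $\Delta_d = f^2\Delta_0$ with $\Delta_0$ the fundamental discriminant of $K_d$) onto the Galois multiplets of Weyl--Heisenberg line-SICs in $\C^d$, with $\MM(f')$ consisting of exactly $\abs{\Cl(\OO_{(f')^2\Delta_0})}$ geometric equivalence classes. Since the multiplets $[[S]]$ partition $\WHSIC{d}/\PEC(d)$ — each geometric class $[S]$ lies in a unique multiplet — summing over multiplets gives
\[
\abs{\WHSIC{d}/\PEC(d)} = \sum_{f' \mid f} \abs{\Cl(\OO_{(f')^2\Delta_0})}.
\]
The second step identifies the right-hand side with $\abs{\ClM(\OO_\Delta)}$, which is immediate from the canonical bijection of \Cref{thm:equiv} between $\ClM(\OO_\Delta)$ (set (0)) and $\bigsqcup_{f'\mid f}\Cl(\OO_{(f')^2\Delta_0})$ (set (1)), resting on the structure theory of class monoids of quadratic orders from \cite{kopplagarias}. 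Composing these two steps proves the equality, conditionally on \Cref{conj:sic-to-order0}.

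To support the conjecture where exact data is available, I would then verify the equality directly for $4 \le d \le 90$: compute $\abs{\ClM(\OO_{(d+1)(d-3)})}$ — most efficiently via \Cref{thm:equiv}(3), by enumerating $\GL_2(\Z)$-conjugacy classes of trace-$(d-1)$ elements of $\SL_2(\Z)$ using reduction of binary quadratic forms — and compare with the number of geometric equivalence classes in the known SIC tables \cite{scott1,play}. The main obstacle to a genuine proof is twofold and genuinely hard. One must (i) exhibit at least one Weyl--Heisenberg SIC in every dimension (Zauner), and (ii) show the count is exact in both directions: the upper bound is essentially the boundedness problem, a zero-dimensionality question about the SIC variety in real algebraic geometry, while the lower bound (distinct ideal classes yielding distinct geometric classes) would require making the arithmetic parametrization of \Cref{conj:14} unconditional, presumably via an explicit Stark-unit construction of fiducial vectors from the ray class fields of the intermediate orders $\OO'$. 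I expect (i) and the upper-bound half of (ii) to be the real barriers; the combinatorial bookkeeping of the first two steps is routine.
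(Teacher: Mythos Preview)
Your proposal is correct and matches the paper's treatment: the statement is a conjecture, not a theorem, and the paper handles it exactly as you describe---it observes that it follows from the finer Order-to-Multiplet Conjecture via the decomposition $\abs{\ClM(\OO_\Delta)} = \sum_{f'\mid f} h_{(f')^2\Delta_0}$ of \Cref{thm:equiv}, and then verifies the equality numerically for $4 \le d \le 90$ against Scott's tables (\Cref{table:classgroup}). The only cosmetic difference is that the paper computes $\abs{\ClM(\OO_\Delta)}$ via description (2) of \Cref{thm:equiv} (reduced binary quadratic forms in Magma) rather than description (3), and it cites \cite{scott2} rather than \cite{scott1,play} for the SIC counts.
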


\Cref{conj:count} immediately gives an upper bound for the total number of Weyl--Heisenberg line-SICs in dimensions $d \ge 4$.

\begin{prop}\label{prop:WHSIC-bound}
\Cref{conj:count} implies that the number $\abs{\WHSIC{d}}$ of Weyl--Heisenberg line-SICs in dimensions $d \ge 4$ satisfies
\begin{equation}\label{eqn:WH-count-number}
\abs{\WHSIC{d}} \le \frac{\abs{\PEC(d)}}{d^2} \abs{\Clm(\OO_{\Delta_d})}.
\end{equation}
\end{prop}

\begin{proof}
The number of distinct Weyl--Heisenberg line-SICs in a $\PEC(d)$-orbit $[S]$ is $\frac{\abs{\PEC(d)}}{\abs{\stab(S)}}$, where $\stab(S)$ is the subgroup of $\PEC(d)$ stabilizing the line-SIC $S \in [S]$. The cardinality $\abs{\stab(S)}$ is independent of the choice of $S \in [S]$. The Weyl--Heisenberg group $\WH(d)$ acts transitively on the set of $d^2$ complex lines $S$ in the $\WH(d)$-orbit of a fiducial vector. Thus, $\PWH(d)$ is a subgroup of $\stab(S)$, and so $d^2$ divides $\abs{\stab(S)}$. Hence we have $\frac{\abs{\PEC(d)}}{\abs{\stab(S)}}\le \frac{ \abs{\PEC(d)}}{d^2}$. The result now follows from \Cref{conj:count}.
\end{proof}

\begin{rmk}\label{rmk:whsic}
\Cref{table:classgroup-1CC} presents, in the column labeled $\abs{\WHSIC{d}}$, empirical data for the number of observed (numerical) SICs for $4 \le d \le 50$ from Scott and Grassl \cite[Table 1]{scott1}. Scott and Grassl also observe that all known numerical Weyl--Heisenberg SICs of dimensions $4 \le d \le 50$ have an additional stabilizer element of order $3$. If this statement were true in general, then we could strengthen \Cref{prop:WHSIC-bound} to conclude
\begin{equation}\label{eqn:WH-count-sharp}
\abs{\WHSIC{d}} \le \frac{1}{3} \frac{\abs{\PEC(d)}}{d^2} \abs{\Clm(\OO_{\Delta_d})}
\end{equation}
holds for all $d \ge 4$. Examination of \Cref{table:classgroup-1CC} shows for known numerical SICs that \eqref{eqn:WH-count-sharp} holds for $4\le d \le 50$ and that equality in \eqref{eqn:WH-count-sharp} holds for
\begin{equation}
d \in \{5, 13, 17, 23, 25, 27, 29, 37, 41, 43, 47, 49\}.
\end{equation}
\end{rmk}

\subsection{Verification of cases of the Geometric Count Conjecture}\label{sec:geover}

We have verified in Table \Cref{table:classgroup-1CC} that the two sets in the Geometric Count Conjecture have the same size for $4 \le d \leq 90$, under the assumption that the published lists of Weyl--Heisenberg SICs in these dimensions, given in Scott \cite{scott2}, are complete. (The lists build on earlier work of Scott and Grassl \cite{scott1}.)

\begin{empirp}
\label{prop:emp1}
For $\Delta_d = (d+1)(d-3)$, the total number of currently known $\PEC(d)$-orbits $[S]$ of numerical SICs, equals $\abs{\Clm(\OO_{\Delta_d})}$ for $4 \le d \le 50$ in the Scott and Grassl list \cite{scott1}, and for $51\le d \le 90$ in the Scott list \cite{scott2}.
\end{empirp}
\begin{proof}
For $4 \leq d \leq 90$, we computed $\abs{\ClM(\OO_{\Delta_d})}$ using Magma \cite{magma} by computing the equivalent quantity $\abs{\QQ(\Delta_d)/\GL_2(\Z)}$. 

Write $\Delta_d = f^2\Delta_0$ for a fundamental discriminant $\Delta_0 = (\Delta_d)_0$. By \Cref{thm:equiv},
\begin{equation}
\ClM(\OO_{\Delta_d}) = \bigsqcup_{f'|f} \Cl(\OO_{(f')^2\Delta_0}).
\end{equation}
For each $f'\div f$, we computed the set of reduced primitive binary quadratic forms of discriminant $(f')^2\Delta_0$ using the Magma function \texttt{ReducedForms()}, written by David Kohel. This function produces a list of $\SL_2(\Z)$-classes of primitive binary quadratic forms in which a twisted $\GL_2(\Z)$-class is always represented either by a single form $ax^2+bxy+cy^2$ with $a>0$ or a pair of forms $\{ax^2+bxy+cy^2,-ax^2+bxy-cy^2\}$. Removing forms with $a<0$ produced a list containing one representative for each primitive twisted $\GL_2(\Z)$-class. Multiplying by $f/f'$ and joining the lists gave a list of representatives of all (not necessarily primitive) classes in $\QQ(\Delta_d)/\GL_2(\Z)$. The counts of quantities for the different $f'$ are given are the unordered set of all ring class numbers $\hcn{(f')^2 \Delta_0} = \abs{\Cl(\OO_{(f')^2\Delta_0})}$. They are given in column labeled by $\hcn{(f')^2 \Delta_0}$ in \Cref{table:classgroup-1CC}. The total counts of class numbers are given in the column labeled by $\abs{\ClM(\OO_\Delta)}$.

{\em Numerical Weyl--Heisenberg SICs} are arrangements of $d^2$ complex lines that are the Weyl--Heisenberg orbit of a single numerical fiducial vector and that appears to have equal angles to $10$ decimal places. Numerical SICs are produced as output of an optimization algorithm. Empirical counts for the number of $\PEC(d)$-orbits of numerical SICs were given by Scott and Grassl for $4 \le d \le 50$ \cite[Tab.~1 and Tab.~2]{scott1} and by Scott \cite[pp.~3--6]{scott2} for $51 \le d \le 90$. The classes in the tables are denoted by dimension and letter (for example, $31a$). Tab.~1 of \cite{scott1} contains the subset of exact SICs, grouped into Galois multiplets, while Tab.~2 of \cite{scott1} gives a list of numerical SICs, not grouped into Galois multiplets. In both cases one can count the total number of $\PEC(d)$-orbits. 

We also found agreement for all dimensions $51 \le d \le 90$ of $\abs{\Clm(\OO_{\Delta_d})}$ given in \Cref{table:classgroup-2BB} with the total count of $\PEC(d)$-classes of SICs in each dimension listed in \cite[App.~C]{scott2}. The empirical search of Scott for Weyl--Heisenberg SICs for dimensions $51\le d \le 90$ was restricted to eigenspaces having a extra order $3$ symmetry of Zauner's type $F_z$, as well as those having an order $3$ symmetry of type $F_a$, as defined in \cite{scott2}. It would constitute an additional conjecture that all Weyl--Heisenberg SICs in dimension $d \ge 4$ have a fiducial vector in one of these eigenspaces. 
\end{proof}

\clearpage
%

\bgroup
\def\arraystretch{1.1}
\begin{scriptsize}
\begin{table}[ht!]
\centering
\begin{tabular}{|||c||c|c|c|c|c|c||}
\hline
$d$\vphantom{$\frac{\abs{\PEC(d)}}{d^2}\hcn{(f')^2\Delta_0}$} & $\Delta$ & $\Delta_0$ & $f'$ & $\sigma_0(f)$ & $\hcn{(f')^2\Delta_0}$ & $\abs{\Clm(\OO_{\Delta})}$ \\
\hline\hline
$1$ & $-4$ & $-4$ & $1$ & $1$ & $1$ & $1$ \\
\hline
$2$ & $-3$ & $-3$ & $1$ & $1$& $1$ & $1$ \\
\hline
$3$ & $0$ & $0$ & $\N$ & $\infty$ & $1$ & $\infty$ \\
\hline
$4$ & $5$ & $5$ & $1$ & $1$ & $1$ & $1$ \\
\hline
$5$ & $12$ & $12$ & $1$ & $1$ & $1$ & $1$ \\
\hline
$6$ & $21$ & $21$ & $1$ & $1$ & $1$ & $1$ \\
\hline
$7$ & $32$ & $8$ & $1,2$ & $2$ & $1,1$ & $2$ \\
\hline
$8$ & $45$ & $5$ & $1,3$ & $2$ & $1,1$ & $2$ \\
\hline
$9$ & $60$ & $60$ & $1$ & $1$ & $2$ & $2$ \\
\hline
$10$ & $77$ & $77$ & $1$ & $1$ & $1$ & $1$ \\
\hline
$11$ & $96$ & $24$ & $1,2$ & $2$ & $1,2$ & $3$ \\
\hline
$12$ & $117$ & $13$ & $1,3$ & $2$ & $1,1$ & $2$ \\
\hline
$13$& $140$ & $140$ & $1$ & $1$ & $2$ & $2$ \\
\hline
$14$ & $165$ & $165$ & $1$ & $1$ & $2$ & $2$ \\
\hline
$15$ & $192$ & $12$ & $1,2,4$ & $3$ & $1,1,2$ & $4$ \\
\hline
$16$ & $221$ & $221$ & $1$ & $1$ & $2$ & $2$ \\
\hline
$17$ & $252$ & $28$ & $1,3$ & $2$ & $1,2$ & $3$ \\
\hline
$18$ & $285$ & $285$ & $1$ & $1$ & $2$ & $2$ \\
\hline
$19$ & $320$ & $5$ & $1,2,4,8$ & $4$ & $1,1,1,2$ & $5$ \\
\hline
$20$& $357$ & $357$ & $1$ & $1$ & $2$ & $2$ \\
\hline
$21$ & $396$ & $44$ & $1,3$ & $2$ & $1,4$ & $5$ \\
\hline
$22$ & $437$ & $437$ & $1$ & $1$ & $1$ & $1$ \\
\hline
$23$ & $480$ & $120$ & $1,2$ & $2$ & $2,4$ & $6$ \\
\hline
$24$ & $525$ & $21$ & $1,5$ & $2$ & $1,2$ & $3$ \\
\hline
$25$ & $572$ & $572$ & $1$ & $1$ & $2$ & $2$ \\
\hline
$26$ & $621$ & $69$ & $1,3$ & $2$ & $1,3$ & $4$ \\
\hline
$27$ & $672$ & $168$ & $1,2$ & $2$ & $2,4$ & $6$ \\
\hline
$28$ & $725$ & $29$ & $1,5$ & $2$ & $1,2$ & $3$ \\
\hline
$29$ & $780$ & $780$ & $1$ & $1$ & $4$ & $4$ \\
\hline
$30$ & $837$ & $93$ & $1,3$ & $2$ & $1,3$ & $4$ \\
\hline
$31$ & $896$ & $56$ & $1,2,4$ & $3$ & $1,2,4$ & $7$ \\
\hline
$32$ & $957$ & $957$ & $1$ & $1$ & $2$ & $2$ \\
\hline
$33$ & $1020$ & $1020$ & $1$ & $1$ & $4$ & $4$ \\
\hline
$34$ & $1085$ & $1085$ & $1$ & $1$ & $2$ & $2$ \\
\hline
$35$ & $1152$ & $8$ & $1,2,3,4,6,12$ & $6$ & $1,1,1,1,2,4$ & $10$ \\
\hline
$36$ & $1221$ & $1221$ & $1$ & $1$ & $4$ & $4$ \\
\hline
$37$ & $1292$ & $1292$ & $1$ & $1$ & $4$ & $4$ \\
\hline
$38$ & $1365$ & $1365$ & $1$ & $1$ & $4$ & $4$ \\
\hline
$39$ & $1440$ & $40$ & $1,2,3,6$ & $4$ & $2,2,2,4$ & $10$ \\
\hline
$40$ & $1517$ & $1517$ & $1$ & $1$ & $2$ & $2$ \\
\hline
$41$ & $1596$ & $1596$ & $1$ & $1$ & $8$ & $8$ \\
\hline
$42$ & $1677$ & $1677$ & $1$ & $1$ & $4$ & $4$ \\
\hline
$43$ & $1760$ & $440$ & $1,2$ & $2$ & $2,4$ & $6$ \\
\hline
$44$ & $1845$ & $205$ & $1,3$ & $2$ & $2,4$ & $6$ \\
\hline
$45$ & $1932$ & $1932$ & $1$ & $1$ & $4$ & $4$ \\
\hline
$46$ & $2021$ & $2021$ & $1$ & $1$ & $3$ & $3$ \\
\hline
$47$ & $2112$ & $33$ & $1,2,4,8$ & $4$ & $1,1,2,4$ & $8$ \\
\hline
$48$ & $2205$ & $5$ & $1,3,7,21$ &$4$ & $1,1,1,4$ & $7$ \\
\hline
$49$ & $2300$ & $92$ & $1,5$ & $2$ & $1,6$ & $7$ \\
\hline
$50$ & $2397$ & $2397$ & $1$ & $1$ & $2$ & $2$ \\
\hline
\end{tabular}\!\!
\begin{tabular}{||c|c|c|c|||}
\hline
$d$\vphantom{$\frac{\abs{\PEC(d)}}{d^2}\hcn{(f')^2\Delta_0}$} & $\abs{\WHSIC{d}}$ & $\frac{\abs{\PEC(d)}}{d^2}$ & $\#$ \\
\hline\hline
$1$ &$1$ & $1$ & $1$ \\
\hline
$2$ & $2$ & $12$ & $1$ \\
\hline
$3$ & $\infty$ & $48$ & $\infty$ \\
\hline
$4$ & $16$ & $96$ & $1$ \\
\hline
$5$ & $80$ & $240$ & $1$ \\
\hline
$6$ & $96$ & $288$ & $1$ \\
\hline
$7$ & $336$ & $672$ & $2$ \\
\hline
$8$ & $320$ & $768$ & $2$ \\
\hline
$9$ & $864$ & $1296$ & $2$ \\
\hline
$10$ & $480$ & $1440$ & $1$ \\
\hline
$11$ & $2640$ & $2640$ & $3$ \\
\hline
$12$ & $1152$ & $2304$ & $2$ \\
\hline
$13$ & $2912$ & $4368$ & $2$ \\
\hline
$14$ & $2688$ & $4032$ & $2$ \\
\hline
$15$ & $6720$ & $5760$ & $4$ \\
\hline
$16$ & $4096$ & $6144$ & $2$ \\
\hline
$17$ & $9792$ & $9792$ & $3$ \\
\hline
$18$ & $5184$ & $7776$ & $2$ \\
\hline
$19$ & $16720$ & $13680$ & $5$ \\
\hline
$20$ & $7680$ & $11520$ & $2$ \\
\hline
$21$ & $26880$ & $16128$ & $5$ \\
\hline
$22$ & $5280$ & $15840$ & $1$ \\
\hline
$23$ & $48576$ & $24288$ & $6$ \\
\hline
$24$ & $15360$ & $18432$ & $3$ \\
\hline
$25$ & $20000$ & $30000$ & $2$ \\
\hline
$26$ & $34944$ & $26208$ & $4$ \\
\hline
$27$ & $69984$ & $34992$ & $6$ \\
\hline
$28$ & $26880$ & $32256$ & $3$ \\
\hline
$29$ & $64960$ & $48720$ & $4$ \\
\hline
$30$ & $46080$ & $34560$ & $4$ \\
\hline
$31$ & $138880$ & $59520$ & $7$ \\
\hline
$32$ & $32768$ & $49152$ & $2$ \\
\hline
$33$ & $84480$ & $63360$ & $4$ \\
\hline
$34$ & $39168$ & $58752$ & $2$ \\
\hline
$35$ & $235200$ & $80640$ & $10$ \\
\hline
$36$ & $82944$ & $62208$ & $4$ \\
\hline
$37$ & $134976$ & $101232$ & $4$ \\
\hline
$38$ & $109440$ & $82080$ & $4$ \\
\hline
$39$ & $314496$ & $104832$ & $10$ \\
\hline
$40$ & $61440$ & $92160$ & $2$ \\
\hline
$41$ & $367360$ & $137360$ & $8$ \\
\hline
$42$ & $129024$ & $96768$ & $4$ \\
\hline
$43$ & $317856$ & $158928$ & $6$ \\
\hline
$44$ & $253440$ & $126720$ & $6$ \\
\hline
$45$ & $207360$ & $155520$ & $4$ \\
\hline
$46$ & $145728$ & $145728$ & $3$ \\
\hline
$47$ & $553472$ & $207552$ & $8$ \\
\hline
$48$ & $276480$ & $147456$ & $7$ \\
\hline
$49$ & $537824$ & $230496$ & $7$ \\
\hline
$50$ & $120000$ & $180000$ & $2$ \\
\hline
\end{tabular}
\medskip
\caption{
Comparison for $1 \le d \leq 50$ of ring class numbers with counts of SICs. The three rightmost columns are from \cite[Tab.~1]{scott1}. They include empirical data for $\abs{\WHSIC{d}}$, and column $\#$ gives empirical data for $\abs{\WHSIC{d}/\PEC(d)}$. The left columns give $\Delta = \Delta_d = (d+1)(d-3) = f^2\Delta_0$ for a fund.~disc.~$\Delta_0$, $f'$ runs over divisors of $f$, $\hcn{(f')^2\Delta_0} = \abs{\Cl(\OO_{(f')^2\Delta_0})}$, and $\sigma_0(f) = \abs{\{\OO : \OO_{\Delta} \subseteq \OO \subseteq{\OO_{\Delta_0}}\}}$. 
}
\label{table:classgroup-1CC}
\end{table}
\end{scriptsize}
\egroup

\clearpage
%

\bgroup
\def\arraystretch{1.1}
\begin{scriptsize}
\begin{table}[ht!]
\centering
\begin{tabular}{||c||c|c|c|c|c|c|}
\hline
$d$ & $\Delta$ & $\Delta_0$ & $f'$ & $\sigma_0(f)$ & $\hcn{(f')^2\Delta_0}$ & $\abs{\Clm(\OO_{\Delta})}$ \\
\hline\hline
$51$ & $2496$ & $156$ & $1,2,4$ & $3$ & $2,4,8$ & $14$ \\
\hline
$52$ & $2597$ & $53$ & $1,7$ & $2$ & $1,3$ & $4$ \\
\hline
$53$ & $2700$ & $12$ & $1,3,5,15$ & $4$ & $1,1,2,6$ & $10$ \\
\hline
$54$ & $2805$ & $2805$ & $1$ & $1$ & $4$ & $4$ \\
\hline
$55$ & $2912$ & $728$ & $1,2$ & $2$ & $2,4$ & $6$ \\
\hline
$56$ & $3021$ &$3021$ & $1$ & $1$ & $6$ & $6$ \\
\hline
$57$ & $3132$ & $348$ & $1,3$ & $1$ & $2,6$ & $8$ \\
\hline
$58$ & $3245$ & $3245$ & $1$ & $1$ & $4$ & $4$ \\
\hline
$59$ & $3360$& $840$ & $1,2$ & $2$ & $4,8$ & $12$ \\
\hline
$60$ & $3477$ & $3477$ & $1$ & $1$ & $4$ & $4$ \\
\hline
$61$ & $3596$ & $3596$ & $1$ & $1$ & $6$ & $6$ \\
\hline
$62$ & $3717$ & $413$ & $1,3$ & $2$ & $1,4$ & $5$ \\
\hline
$63$ & $3840$ & $60$ & $1,2,4,8$ & $4$ & $2,2,4,8$ & $16$ \\
\hline
$64$ & $3965$ & $3965$ & $1$ & $1$ & $4$ & $4$ \\
\hline
$65$ & $4092$ & $4092$ & $1$ & $1$ & $8$ & $8$ \\
\hline
$66$ & $4221$ & $469$ & $1,3$ & $2$ & $3,6$ & $9$ \\
\hline
$67$ & $4352$ & $17$ & $1,2,4,8,16$ & $5$ & $1,1,1,2,4$ & $9$ \\
\hline
$68$ & $4485$ & $4485$ & $1$ & $1$ & $4$ & $4$ \\
\hline
$69$ & $4620$ & $4620$ & $1$ & $1$ & $8$ & $8$ \\
\hline
$70$ & $4757$ & $4757$ & $1$ & $1$ & $5$ & $5$ \\
\hline
$71$ & $4896$ & $136$ & $1,2,3,6$ & $4$ & $2,4,4,8$ & $18$ \\
\hline
$72$ & $5037$ & $5037$ & $1$ & $1$ & $4$ & $4$ \\
\hline
$73$ & $5180$ & $5180$ & $1$ & $1$ & $4$ & $4$ \\
\hline
$74$ & $5325$ & $213$ & $1,5$ & $2$ & $1,6$ & $7$ \\
\hline
$75$ & $5472$ & $152$ & $1,2,3,6$ & $4$ & $1,2,4,8$ & $15$ \\
\hline
$76$ & $5621$ & $5621$ & $1$ & $1$ & $6$ & $6$ \\
\hline
$77$ & $5772$ & $5772$ & $1$ & $1$ & $8$ & $8$ \\
\hline
$78$ & $5925$ & $237$ & $1,5$ & $2$ & $1,6$ & $7$ \\
\hline
$79$ & $6080$ & $380$ & $1,2,4$ & $3$ & $2,4,8$ & $14$ \\
\hline
$80$ & $6237$ & $77$ & $1,3,9$ & $3$ & $1,2,6$ & $9$ \\
\hline
$81$ & $6396$ & $6396$ & $1$ & $1$ & $12$ & $12$ \\
\hline
$82$ & $6557$ & $6557$ & $1$ & $1$ & $3$ & $3$ \\
\hline
$83$ & $6721$ & $105$ & $1,2,4,8$ & $4$ & $2,2,4,8$ & $16$ \\
\hline
$84$ & $6885$ & $85$ & $1,3,9$ & $3$ & $2,2,6$ & $10$ \\
\hline
$85$ & $7052$ & $7052$ & $1$ & $1$ & $4$ & $4$ \\
\hline
$86$ & $7221$ & $7221$ & $1$ & $1$ & $10$ & $10$ \\
\hline
$87$ & $7392$ & $1848$ & $1,2$ & $2$ & $4,8$ & $12$ \\
\hline
$88$ & $7565$ & $7565$ & $1$ & $1$ & $4$ & $4$ \\
\hline
$89$ & $7740$ & $860$ & $1,3$ & $2$ & $2,8$ & $10$ \\
\hline
$90$ & $7917$ & $7917$ & $1$ & $1$ & $4$ & $4$ \\
\hline
\end{tabular}
\medskip
\caption{
Ring class numbers and ring class monoid numbers for $51 \le d \leq 90$. Here $\Delta = \Delta_d = (d+1)(d-3) = f^2\Delta_0$ for a fund.~disc.~$\Delta_0$, $f'$ runs over divisors of $f$, $\hcn{(f')^2\Delta_0} = \abs{\Cl(\OO_{(f')^2\Delta_0})}$, and $\widetilde{\#}$ is $\abs{\{ \OO : \OO_{\Delta} \subseteq \OO \subseteq{\OO_{\Delta_0}}\}} = \sigma_0(f)$. (The values $\abs{\Clm(\OO_{\Delta})}$ agree with empirical data for $\abs{\WHSIC{d}/\PEC(d)}$ of Scott \cite{scott2}).}
\label{table:classgroup-2BB}
\end{table}
\end{scriptsize}
\egroup

\subsection{Statement of the Multiplet Count Conjecture}\label{subsec:55}

We showed in \Cref{prop:413} that \Cref{conj:AFMY20} implies that there are a finite number of well-defined Galois multiplets of WHSICs in dimensions $d \ge 4 $, each consisting of a finite number of geometric equivalence classes; we now refine this prediction to a precise count in every dimension. 

\begin{conj}[Multiplet Count Conjecture]\label{conj:supercount}
Fix a positive integer $d \neq 3$. Let $\Delta = \Delta_d = (d+1)(d-3)$ and $K = \Q(\sqrt{\Delta})$. The number of Galois multiplets of Weyl--Heisenberg SICs in dimension $d$ equals the number of quadratic orders $\OO'$ with $\OO_{\!\Delta} \subseteq \OO' \subseteq \OO_K$. That is,
\begin{equation}
\abs{{\rm WHSIC}(d)/\approx_{GPEC}} = \abs{\{\OO' : \, \OO_{\!\Delta} \subseteq \OO' \subseteq \OO_K\}}.
\end{equation}
\end{conj} 

The content of \Cref{conj:supercount} is that the multiplets of Weyl--Heisenberg SICs are in one-to-one correspondence with orders between $\OO_\Delta$ and $\OO_K$. 

We note that if $\Delta = f^2 \Delta_0$ for a fundamental discriminant $\Delta_0$, with $f>0$, then
\begin{equation}\label{eqn:over-order-number}
s(\OO) := \abs{\{\OO' : \, \OO_{\!\Delta} \subseteq \OO' \subseteq \OO_K\}} = \sigma_0(f),
\end{equation}
where the arithmetic function $\sigma_0(f)$ denotes the number of positive divisors of $f$. (The function $\sigma_0(f)$ is also denoted $\tau(f)$ or $d(f)$ by some authors.) The formula \eqref{eqn:over-order-number} is valid for all quadratic orders.

\subsection{Verification of cases of the Multiplet Count Conjecture}\label{subsec:56}

We have verified that the two sets in the Multiplet Count Conjecture (\Cref{conj:supercount}) have the same size for dimensions $d \leq 90$ ($d \neq 3$) under the assumption that the published lists of Weyl--Heisenberg SICs in these dimensions are complete.

\begin{empirp}
\label{prop:emp2}
Consider the finite list of dimensions
\begin{equation}\label{eq:emp2dlist}
\mathcal{D} = \{ 4, 5, 6,7, 8, 9, 10, 11, 12, 23, 14, 15, 16, 17, 18, 19, 20, 21, 24, 28, 30, 35, 39, 48\}.
\end{equation}
\begin{itemize}
\item[(1)]
The total number of currently known constructed Galois multiplets $[[S]]$ of exact SICs for dimensions $d \in \mathcal{D}$ equals the total number of orders
\begin{equation}
s(\OO_{\Delta_d}) := \abs{\{ \OO' : \OO_{\Delta_d} \subseteq \OO' \subseteq \OO_{\Delta_0} \}},
\end{equation}
where $\Delta = \Delta_d = (d+1)(d-3)$. Equivalently, writing $\Delta = f^2 \Delta_0$, where $\Delta_0$ is a fundamental discriminant, this number $s(\OO_{\Delta_d}) = \sigma_0(f)$, the number of positive divisors of $f$.
\item[(2)]
For the dimensions $d \in \mathcal{D}$ and the multiplets $[[S]]$ above, there is at least one bijective map 
\begin{equation}
\MM: \{ \OO' : \OO_{\Delta} \subseteq \OO' \subseteq \OO_{\Delta_0} \} \to \{ [[S]]: S \, \mbox{a Weyl--Heisenberg line-SIC in} \, \C^d\}
\end{equation}
such that the number of geometric equivalence classes $[S] \in \MM(\OO_{(f')^2 \Delta_0})$ is $\abs{\Cl(\OO_{(f')^2 \Delta_0})}$.
\end{itemize}
\end{empirp}
\begin{proof}
The result follows from comparison of \cite[Tab.~1 and Tab.~2]{acfw} on known exact SICs with the number theoretic data on $\hcn{(f')^2 \Delta_0}$ from our \Cref{table:classgroup-1CC}. Here \cite[Tab.~1]{acfw} gives results of Scott and Grassl \cite{scott1} for exact multiplets for $4 \le d \le 16$, excluding $d=15$; \cite[Tab.~2]{acfw} gives the remaining exact multiplets in \eqref{eq:emp2dlist}. Each Galois multiplet contains a number of $\PEC(d)$-orbits, indicated by letters; for example, the multiplet $35bcdg$ consists of four $\PEC(35)$-orbits. 

\textit{(1).} In the dimensions in \eqref{eq:emp2dlist}, the number of Galois multiplets is found to agree with the data for $s(\OO_{\Delta_d}) = \sigma_0(f)$, given in column $\sigma_0(f)$ of \Cref{table:classgroup-1CC}.

\textit{(2).} The existence of a bijective map $\MM$ in (2) is equivalent to the assertion that the unordered multiset of cardinalities $\abs{\left\{[S] \in [[S]]\right\}}$ of $\PEC(d)$-orbits of SICs in each Galois multiplet is equal to the unordered multiset of ring class numbers $ \hcn{(f')^2\Delta_0}$ such that $f' \div f$. The multiset equality was verified by comparing the exact multiplet data of \cite{acfw} with that in column labeled $\hcn{(f')^2 \Delta_0}$ in \Cref{table:classgroup-1CC}.
\end{proof}

\begin{rmk}\label{rmk:511}
The bijective map $\MM$ is not uniquely specified using these class number counts alone. However, when augmented by requiring agreement of all inclusion relations between the associated SIC fields $\fieldproj{S}$, the truth of \Cref{conj:14} in dimension $d$ uniquely determines the map $\MM$ for the dimensions $d$ in \eqref{eq:emp2dlist}. (See \Cref{subsec:85a}; the smallest dimension where $\MM$ is not determined by the inclusion relations is $d=47$.)
\end{rmk}

\begin{egz}\label{example:59}
Let $d=35$. Here $\Delta_d= 36\cdot 32= 2^7 \cdot 3^2$. Here $\Delta_0=8$ and $f=12$. The set of divisors $f'$ of $12$ is $\{ 1, 2, 3, 4, 6, 12\}$.

We exhibit a bijection $\MM$ of \Cref{conj:sic-to-order0} from orders to multiplets in \Cref{fig:35-order-multiplet-corresp}. This bijection satisfies part (1) of \Cref{conj:sic-to-order0} for $d=35$. This choice of $\MM$ also satisfies part (2) of \Cref{conj:sic-to-order0} for $d=35$, as shown in \Cref{example:67}.
 
\begin{figure}[htb]\label{fig:35a}
\begin{tikzpicture}
[auto, nd/.style={circle,minimum size = 13 mm,draw}]
\matrix[row sep=2mm,column sep=8mm] {
& & \node (35bcdg) [nd] {${\OO_{1152}}\atop{h=4}$};
\\
& \node (35af) [nd] {${\OO_{288}}\atop{h=2}$}; &
\\
\node (35e) [nd] {${\OO_{72}}\atop{h=1}$}; 
& & & \node (35h) [nd] {${\OO_{128}}\atop{h=1}$};
\\
& & \node (35i) [nd] {${\OO_{32}}\atop{h=1}$}; &
\\
& \node (35j) [nd] {${\OO_{8}}\atop{h=1}$}; & &
\\
};
\draw[->] (35j) to node[swap] {4} (35i);
\draw[->] (35i) to node[swap] {4} (35h);
\draw[->] (35j) to node{9} (35e);
\draw[->] (35i) to node[swap] {9} (35af);
\draw[->] (35e) to node{4} (35af);
\draw[->] (35af) to node {4} (35bcdg);
\draw[->] (35h) to node[swap] {9} (35bcdg);
\end{tikzpicture}
\begin{tikzpicture}
[auto, nd/.style={circle,minimum size = 13 mm,draw}]
\matrix[row sep=2mm,column sep=8mm] {
& & \node (35bcdg) [nd] {$35bcdg$};
\\
& \node (35af) [nd] {$35af$}; &
\\
\node (35e) [nd] {$35e$}; & & & \node (35h) [nd] {$35h$};
\\
& & \node (35i) [nd] {$35i$}; &
\\
& \node (35j) [nd] {$35j$}; & &
\\
};
\draw[->] (35j) to node[swap]{} (35i);
\draw[->] (35i) to node[swap]{} (35h);
\draw[->] (35j) to node{} (35e);
\draw[->] (35i) to node[swap]{} (35af);
\draw[->] (35e) to node{} (35af);
\draw[->] (35af) to node{} (35bcdg);
\draw[->] (35h) to node[swap]{} (35bcdg);
\end{tikzpicture}
\caption{\label{fig:35-order-multiplet-corresp} Left side: For $d=35$, over-orders of $\OO_{1152}$, and their ring class numbers $h=\hcn{\Delta_0(f')^2}= \abs{\Cl(\OO_{(f')^2 \Delta_0})}$, with $\Delta_0=8$. The direction on edges indicate reverse set inclusions, with edge labels giving relative indices of orders. Thus $\OO_{1152} \subset \OO_8$ has index $144$ in $\OO_8$. Right side: For $d=35$, $\MM$-correspondence to Galois multiplets of $d=35$ SICs, with multiplet letter labels ($35a$ to $35j$) corresponding to $\PEC(35)$-orbits of SICs. Class numbers on the left match the number of $\PEC(35)$-orbit letter labels in each Galois multiplet.}
\end{figure}
\end{egz}

\section{Conjectures on precise fields of definition of Weyl--Heisenberg SICs}\label{sec:field}

In this section, we study the number-theoretic properties of the ray class fields of orders $\OO'$ predicted to be fields of definition of Weyl--Heisenberg SICs by \Cref{conj:14}. We give formulas for the degrees of those ray class field over $K$, by means of studying the associated ray class groups. In \Cref{prop:66}, we test numerical predictions of relative degrees and field inclusions implied by \Cref{conj:sic-to-order0} and \Cref{conj:14}. We also study the relative degrees of field inclusions predicted in \Cref{conj:sic-to-order0}(2), noting there exist cases where the map $\MM$ is not uniquely determined by the conclusions of \Cref{conj:sic-to-order0} and \Cref{conj:14}. Before studying the special ray class fields conjecturally associated to SICs, we summarize some properties of ray class fields of orders in general.

\subsection{Ray class fields of orders}\label{sec:81}

The fields attached to a Weyl--Heisenberg SIC are believed to be abelian extensions of $K = \Q(\sqrt{\Delta_d})$. Class field theory gives an abstract description of abelian extensions of $K$, viewed with respect to the ideals of the maximal order $\OO_K$. \Cref{conj:sic-to-order0} implies that a Weyl--Heisenberg SIC can unambiguously be assigned an order $\OO$ of $K$ with $\Z[\e_d] \subseteq \OO \subseteq \OO_K$. There is a long history of work which supplies a notion of class field theory attached to orders of number fields. In particular, the \textit{ring class field} of an order $\OO$ has Galois group over $K$ isomorphic to $\Cl(\OO)$ and generalizes the \textit{Hilbert class field}, which is the maximal unramified extension of $K$ and has class group $\Cl(\OO_K)$.
 
In \cite{kopplagarias}, the authors modify the standard constructions of class field theory to describe a distinguished collection of \textit{ray class fields of orders} $H^{\OO}_{\mm, \Sigma}$, each specified by a \textit{level datum} $(\OO; \mm, \Sigma)$, where $\OO$ is a (possibly non-maximal) order of $K$, $\mm$ an integral $\OO$-ideal, and $\Sigma$ a subset of real places of $K$. In the case that $K$ is real quadratic, there are two infinite places, so $\Sigma$ is one of $\emptyset$, $\{ \infty_1\}$, $\{\infty_2\}$, or $\{ \infty_1, \infty_2\}$. For the datum $(\OO; \OO, \emptyset)$, the associated field $H^{\OO}_{\OO, \emptyset}$ is the ring class field associated to the order. For the datum $(\OO_K; \mm, \rS)$, the associated field $H^{\OO_K}_{\mm,\rS}$ is the standard (Takagi) ray class field $H_{\mm,\rS}$. For a fixed order $\OO$, any abelian extension of $K$ is a subfield of some $H^{\OO}_{\mm,\rS}$.

We recall \cite[Thm.~1.1]{kopplagarias}:
\begin{thm}\label{thm:main1-KL} 
Let $K$ be a number field and $(\OO; \mm, \rS)$ a level datum for $K$. There exists a unique abelian Galois extension $H_{\mm,\rS}^{\OO}/K$ with the property that a prime ideal $\pp$ of $\OO_K$ that is coprime to the quotient ideal $\colonideal{\mm}{\OO_K}$ splits completely in $H_{\mm,\rS}^{\OO}/K$ if and only if $\pp \cap \OO = \pi\OO$, a principal prime $\OO$-ideal having $\pi \in \OO$ with $\pi \equiv 1 \Mod{\mm}$ and $\rho(\pi)>0$ for $\rho \in \rS$.
\end{thm}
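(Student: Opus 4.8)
The plan is to build $H_{\mm,\rS}^{\OO}$ as an explicit subfield of a classical (Takagi) ray class field of $K$ and to read off its splitting law by transporting the classical one across the ideal correspondence between $\OO$ and $\OO_K$ away from the conductor. The first task is to pin down the right classical modulus. Let $\mathfrak{f} := \colonideal{\OO}{\OO_K}$ be the conductor of $\OO$ and set $\mathfrak{n} := \colonideal{\mm}{\OO_K}$, an integral $\OO_K$-ideal; since $\mm \subseteq \OO$ one has $\mathfrak{n} \subseteq \mathfrak{f} \subseteq \OO$ and $\mathfrak{n} \subseteq \mm$, so $\mathfrak{n}$ is in particular an integral $\OO$-ideal, and a prime $\pp$ of $\OO_K$ coprime to $\colonideal{\mm}{\OO_K}$ is automatically coprime to $\mathfrak{f}$ (this is exactly the coprimality hypothesis in the statement). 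I would then introduce the ray class group of the level datum, $\Cl_{\mm,\rS}(\OO) := I_{\OO}(\mathfrak{n})/P_{\OO}(\mm,\rS)$, where $I_{\OO}(\mathfrak{n})$ is the group of invertible fractional $\OO$-ideals coprime to $\mathfrak{n}$ and $P_{\OO}(\mm,\rS)$ is the subgroup of those $\pi\OO$ with $\pi \in \OO$, $\pi \equiv 1 \pmod{\mm}$, and $\rho(\pi) > 0$ for all $\rho \in \rS$.

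Next I would compare $\Cl_{\mm,\rS}(\OO)$ with the classical ray class group $\Cl_{\mathfrak{n},\rS}(\OO_K)$ of modulus $\mathfrak{n}\prod_{\rho \in \rS}\rho$, which by class field theory over $\OO_K$ is isomorphic, via the Artin map, to $\Gal(H_{\mathfrak{n},\rS}/K)$ for the usual ray class field $H_{\mathfrak{n},\rS}$. The linchpin is a short computation: if $\pi \in \OO_K$ and $\pi \equiv 1 \pmod{\mathfrak{n}}$, then $\pi - 1 \in \mathfrak{n} \subseteq \mathfrak{f} \subseteq \OO$ forces $\pi \in \OO$, while $\pi - 1 \in \mathfrak{n} \subseteq \mm$ gives $\pi \equiv 1 \pmod{\mm}$ in $\OO$; and for such $\pi$, if $x = \pi y$ with $x \in \OO$ and $y \in \OO_K$, then $x - y \in \mathfrak{n} \subseteq \OO$ forces $y \in \OO$, so $\pi\OO_K \cap \OO = \pi\OO$. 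Combined with the standard fact that $\aa \mapsto \aa\OO_K$ and $\bb \mapsto \bb \cap \OO$ are mutually inverse, multiplicative, primality-preserving bijections between invertible $\OO$-ideals and $\OO_K$-ideals coprime to $\mathfrak{f}$, this shows $\bb \mapsto \bb \cap \OO$ carries $P_{\OO_K}(\mathfrak{n},\rS)$ into $P_{\OO}(\mm,\rS)$ and hence induces a surjection $q : \Cl_{\mathfrak{n},\rS}(\OO_K) \twoheadrightarrow \Cl_{\mm,\rS}(\OO)$ (so $\Cl_{\mm,\rS}(\OO)$ is finite). I would then \emph{define} $H_{\mm,\rS}^{\OO}$ to be the fixed field of $H_{\mathfrak{n},\rS}$ under the subgroup $N := \ker q$ of $\Gal(H_{\mathfrak{n},\rS}/K)$, so that $\Gal(H_{\mm,\rS}^{\OO}/K) \isom \Cl_{\mm,\rS}(\OO)$ and the Artin map carries the class of a prime $\pp$ coprime to $\mathfrak{n}$ to $[\pp \cap \OO]$.

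The splitting law then follows formally: a prime $\pp$ coprime to $\colonideal{\mm}{\OO_K} = \mathfrak{n}$ splits completely in $H_{\mm,\rS}^{\OO}$ iff its Frobenius in $\Gal(H_{\mm,\rS}^{\OO}/K) \isom \Cl_{\mm,\rS}(\OO)$ is trivial, iff $[\pp \cap \OO]$ is trivial in $\Cl_{\mm,\rS}(\OO)$, iff $\pp \cap \OO = \pi\OO$ for some $\pi \in \OO$ with $\pi \equiv 1 \pmod{\mm}$ and $\rho(\pi) > 0$ for all $\rho \in \rS$ --- and $\pp \cap \OO$ is a prime $\OO$-ideal since $\pp$ is coprime to $\mathfrak{f}$. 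For uniqueness, if $L/K$ is any other finite abelian extension with this splitting property, then $L$ and $H_{\mm,\rS}^{\OO}$ have the same completely split primes outside the finite set dividing $\mathfrak{n}$; since $\mathrm{Spl}(M/K)$ has Dirichlet density $1/[M:K]$ for finite Galois $M/K$ and $\mathrm{Spl}(L\cdot H_{\mm,\rS}^{\OO}/K) = \mathrm{Spl}(L/K) \cap \mathrm{Spl}(H_{\mm,\rS}^{\OO}/K)$, comparing densities (which ignore the finite exceptional set) forces $[L\cdot H_{\mm,\rS}^{\OO}:K] = [L:K] = [H_{\mm,\rS}^{\OO}:K]$, hence $L = H_{\mm,\rS}^{\OO}$.

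The step I expect to be the main obstacle is the comparison in the second paragraph: verifying that the ideal correspondence between $\OO$ and $\OO_K$ interacts correctly with the congruence and sign conditions --- i.e.\ that $q$ is well defined and surjective, and, in a careful treatment, describing $N$ intrinsically. Every delicate point concerns primes dividing the conductor, where $\OO$ and $\OO_K$ genuinely differ, and the choice $\mathfrak{n} = \colonideal{\mm}{\OO_K}$ is precisely what makes those discrepancies harmless. Once this comparison is in hand, existence and uniqueness reduce to classical Takagi/Artin class field theory over the maximal order together with the Chebotarev density theorem.
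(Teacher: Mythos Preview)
The paper does not actually prove this theorem; it is quoted verbatim from \cite[Thm.~1.1]{kopplagarias} and simply recalled here as background. So there is no ``paper's own proof'' to compare against in this text.

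That said, your sketch is correct and is visibly the intended construction: the companion result \Cref{thm:main3-KL} (also quoted from \cite{kopplagarias}) asserts precisely that $H_{\mm,\rS}^{\OO}=H_0$ sits inside the classical Takagi ray class field $H_1=H_{\colonideal{\mm}{\OO_K},\rS}^{\OO_K}$ with $\Art_{\OO}([\aa])=\Art([\aa\OO_K])|_{H_0}$, which is exactly your ``fixed field of $\ker q$'' description read through the Artin isomorphism. Your key computation---that $\pi\equiv 1\pmod{\mathfrak n}$ in $\OO_K$ forces $\pi\in\OO$, $\pi\equiv 1\pmod{\mm}$, and $\pi\OO_K\cap\OO=\pi\OO$---is the crux, and it goes through because $\mathfrak n=\colonideal{\mm}{\OO_K}\subseteq\ff(\OO)$ (noted in the paper just after \Cref{thm:main1-KL}). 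The surjectivity of $q$ follows since extension/contraction are inverse bijections on invertible ideals coprime to $\ff(\OO)$, hence a fortiori on those coprime to $\mathfrak n$; the paper alludes to the analogous bookkeeping via \cite[Lem.~4.12]{kopplagarias}. Your uniqueness argument via Chebotarev densities is standard and fine.

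One minor caution: you have silently chosen a particular presentation of $\Cl_{\mm,\rS}(\OO)$ (numerator $I_{\OO}(\mathfrak n)$). In the cited paper the definition is set up slightly differently and then shown to admit representatives coprime to $\ff(\OO)\cap\mm$; your presentation is equivalent, but in a fully rigorous write-up you would need either to adopt their definition and invoke that lemma, or to check that your quotient is independent of the coprimality cutoff. This is exactly the ``comparison in the second paragraph'' you flagged as the delicate step, and you have correctly identified it.
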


To understand \Cref{thm:main1-KL}, recall that for any integral ideal $\mm$ of $\OO$, the quotient ideal $\colonideal{\mm}{\OO_K}$ is the largest ideal of $\OO_K$ contained in $\mm$. An important invariant of an order $\OO$ of an algebraic number field $K$ is its (absolute) conductor $\ff = \ff(\OO) = \colonideal{\OO}{\OO_K}$, which is the (set-theoretically) largest integral ideal $\ff \subseteq \OO$ of $\OO$ that is also an ideal of the maximal order $\OO_K$. The conductor ideal $\ff (\OO)$ encodes information on all the non-invertible ideals in the order $\OO$. The quotient ideal $\colonideal{\mm}{\OO_K}$ always satisfies the inclusions (as $\OO_K$-ideals)
\begin{equation}\label{eq:112} 
\ff(\OO) \mm \subseteq \colonideal{\mm}{\OO_K} \subseteq \ff(\OO) \cap \mm\OO_K;
\end{equation} 
in particular, $\colonideal{\mm}{\OO_K} \subseteq \ff(\OO)$. The three $\OO_K$-ideals in \eqref{eq:112} all have the same prime divisors. This holds because a prime ideal $\pp$ of $\OO_K$ such that $\pp \supseteq \ff(\OO) \mm = \ff(\OO) \mm\OO_K $ satisfies either $\pp \supseteq \ff(\OO)$ or $\pp \supseteq \mm\OO_K$, and thus, $\pp \supseteq \ff(\OO) \cap \mm\OO_K$. 

There is a Galois correspondence between ray class groups of an order and ray class fields of an order. The \textit{ray class group of an order} $\Cl_{\mm,\rS}(\OO)$ for level datum $(\OO; \mm, \rS)$ is defined as a quotient of certain groups of invertible fractional ideals of the order $\OO$. The ray class field $H_{\mm,\rS}^{\OO}$ of an order $\OO$ is associated to an appropriate $\Cl_{\mm,\rS}(\OO)$ in such a way that $\Gal(H_{\mm,\rS}^{\OO}/K) \isom \Cl_{\mm,\rS}(\OO)$ as abelian groups. Moreover, if $(\OO;\mm,\rS)$ and $(\OO';\mm',\rS')$ are level data satisfying $\OO \subseteq \OO'$, $\mm\OO' \subseteq \mm'$, and $\rS \supseteq \rS'$, then there is a natural quotient map $\Cl_{\mm,\rS}(\OO) \surj \Cl_{\mm',\rS'}(\OO')$ and a corresponding natural field inclusion $H_{\mm',\rS'}^{\OO'} \subseteq H_{\mm,\rS}^{\OO}$.

We recall \cite[Thm.~1.3]{kopplagarias}:
\begin{thm}\label{thm:main3-KL} 
For an order $\OO$ in a number field $K$ and any level datum $(\OO; \mm, \rS)$ with associated ray class field $H_0:=H_{\mm,\rS}^{\OO}$, there is an isomorphism $\Art_{\OO} : \Cl_{\mm, \rS}(\OO) \to \Gal(H_0/K)$, uniquely determined by its behavior on prime $\OO$-ideals $\pp$ coprime to $\ff(\OO) \cap \mm$, having the property that
\begin{equation}\label{eq:artinlocal}
\Art_{\OO}([\pp])(\alpha) \equiv \alpha^q \Mod{\mathfrak{P}},
\end{equation}
where $\mathfrak{P}$ is any prime of $\OO_{H_0}$ lying over $\pp\OO_K$, and $q=p^j$ is the number of elements in the finite field $\OO/\pp$. For any (not necessarily prime) ideal $\aa$ of $\OO$ coprime to $\ff(\OO) \cap \mm$,
\begin{equation}\label{eqn:artin-3}
\Art_{\OO}([\aa]) = \left.\Art([\aa\OO_K])\right|_{H_0},
\end{equation}
where $\Art : \Cl_{\colonideal{\mm}{\OO_K},\rS}(\OO_K) \to \Gal(H_1/K)$ is the usual Artin map in class field theory (as defined by \eqref{eq:artinlocal} in the special case $\OO=\OO_K$), with $H_1 = H_{\colonideal{\mm}{\OO_K},\rS}^{\OO_K}$ being a (Takagi) ray class field for the maximal order $\OO_K$, and $H_0 \subseteq H_1$.
\end{thm}

In \Cref{thm:main3-KL}, the set of prime ideals coprime to $\ff(\OO) \cap \mm$ includes all but finitely many of the prime ideals of $\OO$. The Artin map is fully defined by \eqref{eq:artinlocal}, which specifies the map on prime ideals of $\OO$ coprime to $\ff(\OO)\cap\mm$, because any ray ideal class in $\Cl_{\mm,\rS}(\OO)$ has a representative that is coprime to $\ff(\OO)\cap\mm$ (by \cite[Lem.~4.12]{kopplagarias}). Ideals of $\OO$ that are coprime to $\ff$ enjoy unique factorization into prime ideals (even though arbitrary nonzero ideals of $\OO$ do not). When $\OO=\OO_K$, \eqref{eq:artinlocal} specializes to the standard definition of the Artin map in the case of a Takagi ray class field.

\subsection{Special ray class fields of real quadratic orders}\label{subsec:62}

We recall the fact that orders of a quadratic field $K$ are completely specified by their discriminants $\Delta' = (f')^2\Delta_0$, where $\Delta_0$ is the field discriminant of $K$ (a fundamental discriminant; see the beginning of \Cref{subsec:55}) and $f'$ is a positive integer. 
\begin{defn}
\label{defn:Srcfo}
Fix a positive integer $d \geq 4$, write $\Delta = \Delta_d = (d+1)(d-3) = f^2\Delta_0$ with $\Delta_0$ a fundamental discriminant, and let $f'$ be a positive divisor of $f$. Let $\Delta' = (f')^2\Delta_0$ and $\OO' = \OO_{\Delta'}$. We introduce two families of ray class fields of the order $\OO'$ parametrized by $d$ and $f'$:
\begin{align}
E_{d,f'} &= H_{d\OO',\{\infty_1,\infty_2\}}^{\OO'} \quad \mbox{and} \quad
\tE_{d,f'} := H_{d'\OO',\{\infty_1,\infty_2\}}^{\OO'}.
\end{align}
Here $d'=d$ if $d$ is odd and $d'=2d$ if $d$ is even.
\end{defn}

The definitions yield $E_{d, f'} = \tE_{d, f'}$ when $d$ is odd, and the natural field inclusion $E_{d, f'} \subseteq \tE_{d', f'}$ when $d$ is even. We have $[\tE_{d, f'} : E_{d, f'}] = 2$ when $d$ is even, using \Cref{thm:rclsize}(2) below.

The larger fields $\tE_{d,f'}$ appear in \Cref{conj:14}, which asserts $\tE_{d, f'} =\fieldproj{S}$ whenever $\MM(\OO') = [[S]]$. \Cref{conj:14} is a natural extension of \Cref{conj:AFMY20}. \Cref{conj:AFMY20}(1) is equivalent to the statement that $\fieldproj{S} = \tE_{d,1}$ for any $S$ in the minimal multiplet. In \Cref{sec:A1} the smaller fields $E_{d, f'}$ are conjecturally associated to $\fieldtrip{S}$ for $[[S]]$, the multiplet associated to $\OO'$ by $\MM$.

We wish to compute the degrees of these fields. \Cref{thm:main3-KL} implies that the degree of the field $\tilde{E}_{d,f'}$ (respectively, $E_{d,f'}$) over $K= \Q(\sqrt{\Delta})$ equals the ray class number $\abs{\Cl_{d'\OO', \{\infty_1, \infty_2\}} (\OO')}$ (respectively, $\abs{\Cl_{d\OO', \{\infty_1, \infty_2\}} (\OO')}$). 

For our numerical computations, we calculated these ray class numbers using formulas from the ray class theory for orders developed in \cite[Thm.~5.6]{kopplagarias}. The ray class numbers calculation uses special features of the unit in the real quadratic order $\OO_{\Delta_d}$, $ d \ge 4$, obeying a congruence $\Mod{d}$. The formulas and the unit congruence are given in \Cref{sec:C}, in \Cref{thm:rclsize}. 

\subsection{Evidence for the Ray Class Fields of Orders Conjecture}\label{subsec:63}

Assuming the truth of \Cref{conj:14}, it follows from \Cref{thm:main3-KL} that the degree of $\fieldproj{S}$ over $K=\Q(\sqrt{(d+1)(d-3)})$ equals the class number $\abs{\Cl_{d\OO', \{\infty_1, \infty_2\}} (\OO')}$. We verify this numerical requirement in several cases. 

\begin{empirp}\label{prop:66}
For each dimension $d$ with $4 \le d \le 15$ (and $d=35$ under unpublished calculations of Appleby and Flammia), there exists a bijective map $\MM$ from orders to the (empirical) currently known constructed Galois multiplets $[[S]]$. Taking $\Delta = \Delta_d = (d+1)(d-3)= f^2 \Delta_0$ and $K= \Q(\sqrt{\Delta})$, the map $\MM$ may be chosen to have the property that, for any $f' \div f$ and constructed SIC $S$ such that $[[S]] = \MM( \OO_{ ((f')^2 \Delta_0})$, we have
\begin{equation}
[\fieldproj{S}: K] = [\tE_{d,f'}: K].
\end{equation} 
\end{empirp}
\begin{proof}
The numerical equalities are tabulated for $4 \le d \le 15$ and for $d=35$ in \Cref{table:classfield-1a} below. By \Cref{thm:main3-KL}, the degree of $\tE_{d,f'}$ over $K=\Q(\sqrt{\Delta_d})$ equals the class number $\abs{\Cl_{d\OO', \{\infty_1, \infty_2\}}(\OO')}$. We computed the relevant class numbers using the right side of \eqref{eq:rclsize}. 

The empirical calculations of field degrees $[\fieldproj{S}: K]$ are based on data for these fields given in \cite[Tab.~III]{scott1} and in \cite[Tab.~A7]{acfw}. The calculations of field degrees $[\fieldproj{S}: K]$ for multiplets $[[S]]$ of exact SICs for $d=15$ and $d=35$ can be extracted from \cite[Tab.~ A7]{acfw}. There $\fieldproj{S}$ corresponds to $\mathbb{E}_1$; the degree for $\mathbb{E}$ given in the table \cite[Tab.~ A7]{acfw} must be divided by $4$, because the degree of $\mathbb{E}$ is given over $\Q$, and because $\mathbb{E}= \mathbb{E}_1(i)$, an extension of degree $2$ since $\mathbb{E}_1$ does not contain $i$ for odd $d \ge 4$. For inclusions of fields for $d=35$, we made use of additional calculations \cite[unpublished]{applebyflammiapersonal}, which determined the inclusions between the fields $\fieldproj{S}$ in the six multiplets $[[S]]$ of exact SICs for $d=35$, correcting the picture in \cite[Fig.~1]{acfw}, by showing the field associated to multiplet $35e$ is contained in the field associated to multiplet $35af$.

The inclusions of fields $\tE_{d, f'}$ coincide exactly with the reverse ordering of the lattice of divisibility relations of the set $\{f' : f' \ge 1, \,f' \div f\}.$ On comparison we find there exists a unique bijection $\MM$ for all the dimensions in \Cref{table:classfield-1a} compatible with the field inclusions. In \Cref{table:classfield-1a} the orbit labels in the ``multiplet'' column are based on Flammia's online table \cite{flammiaweb}, following \cite{scott1,scott2,acfw}, specifically using the labels for exact SICs, which differ in some cases from Scott and Grassl's labels for numerical SICs \cite{scott1}. The correspondence between the columns labeled $f'$ and ``multiplet'' in \Cref{table:classfield-1a} specifies the map $\MM$.
\end{proof} 

\begin{egz}\label{example:67}
We illustrate in \Cref{fig:35-vec-fields} a choice of map $\MM$ in the case $d=35$, matching inclusions of ray class fields of orders with inclusions of SIC fields $\fieldvec{S(35*)}$ where $* \in \{ a, b, c,d,e,f,g,h,i,j \}$. We have $\Delta_d = 36\cdot 32= 2^7 \cdot 3^2$, so $\Delta_0=8$ and $f= 12$. The set of divisors $f'$ of $12$ are $\{1, 2, 3, 4, 6, 12\}$. The SIC fields for the multiplets for $d=35$ are extracted from \cite[Tab.~9]{acfw}. The illustrated choice of $\MM$ satisfies \Cref{conj:sic-to-order0}(2), which determines $\MM$ uniquely. This choice also satisfies \Cref{conj:sic-to-order0}(1); see \Cref{example:59}. 

\begin{figure}[htb]\label{fig:35b}
\begin{tikzpicture}
[auto, nd/.style={circle,minimum size = 13 mm,draw}]
\matrix[row sep=2mm,column sep=8mm] {
& & \node (35bcdg) [nd] {$\tilde{E}_{35,12}$};
\\
& \node (35af) [nd] {$\tilde{E}_{35,6}$}; &
\\
\node (35e) [nd] {$\tilde{E}_{35,3}$}; & & & \node (35h) [nd] {$\tilde{E}_{35,4}$};
\\
& & \node (35i) [nd] {$\tilde{E}_{35,2}$}; &
\\
& \node (35j) [nd] {$\tilde{E}_{35,1}$}; & &
\\
};
\draw[->] (35j) to node[swap] {2} (35i);
\draw[->] (35i) to node[swap] {2} (35h);
\draw[->] (35j) to node{4} (35e);
\draw[->] (35i) to node[swap] {4} (35af);
\draw[->] (35e) to node{2} (35af);
\draw[->] (35af) to node {2} (35bcdg);
\draw[->] (35h) to node[swap] {4} (35bcdg);
\end{tikzpicture}
\begin{tikzpicture}
[auto, nd/.style={circle,minimum size = 13 mm,draw}]
\matrix[row sep=2mm,column sep=8mm] {
& & \node (35bcdg) [nd] {$35bcdg$};
\\
& \node (35af) [nd] {$35af$}; &
\\
\node (35e) [nd] {$35e$}; & & & \node (35h) [nd] {$35h$};
\\
& & \node (35i) [nd] {$35i$}; &
\\
& \node (35j) [nd] {$35j$}; & &
\\
};
\draw[->] (35j) to node[swap] {2} (35i);
\draw[->] (35i) to node[swap] {2} (35h);
\draw[->] (35j) to node{4} (35e);
\draw[->] (35i) to node[swap] {4} (35af);
\draw[->] (35e) to node{2} (35af);
\draw[->] (35af) to node {2} (35bcdg);
\draw[->] (35h) to node[swap] {4} (35bcdg);
\end{tikzpicture}
\caption{\label{fig:35-vec-fields} Left side: For $d=35$, field inclusions of ray class fields $\tE_{35, f'}$ $\Mod{35\OO_{(f')^2\Delta_0}}$ of over-orders of $\OO_{1152}$, with edge labels giving relative field degrees. $\tilde{E}_{35,1}$ has degree $72$ over $\Q(\sqrt{\Delta_0})$, where $\Delta_0=8$. Right side: For $d=35$, SIC multiplets with letter labels ($35a$ to $35j$) specifying $\PEC(35)$-orbits, and edge labels giving relative degrees of associated fields $\fieldvec{S_{35*}}$ for $*\in \{a,b,c,d,e,f,g,h,i,j\}$. $\fieldvec{S_{35j}}$ has degree $72$ over $\Q(\sqrt{\Delta_0})$.}
\end{figure}

Inclusions of SIC fields on the right side of \Cref{fig:35-vec-fields} for $d=35$ were given in \cite[Fig.~1]{acfw}. Our figure for $d=35$ here incorporates a later correction \cite[unpublished]{applebyflammiapersonal}: the ratio SIC field for $35e$ is contained in the ratio SIC field for $35af$.
\end{egz}

\bgroup
\def\arraystretch{1.1}
\begin{table}[ht!]
\begin{tabular}{|||c|cccc|||cc|||}
\hline
$d$ & $\OO'$ & $f'$ & $\hcn{\OO'}$ & $[\cfp{d}{f'} : K]$ & multiplet & $[\fieldproj{S} : K]$ \\
\hline
$4$ & $\Z[\frac{1+\sqrt{5}}{2}]$ & $1$ & $1$ & $8$ & $4a$ & $8$ \\
$5$ & $\Z[\sqrt{3}]$ & $1$ & $1$ & $16$ & $5a$ & $16$ \\
$6$ & $\Z[\frac{1+\sqrt{21}}{2}]$ & $1$ & $1$ & $24$ & $6a$ & $24$ \\
$7$ & $\Z[\sqrt{2}]$ & $1$ & $1$ & $12$ & $7b$ & $12$ \\
 & $\Z[2\sqrt{2}]$ & $2$ & $1$ & $24$ & $7a$ & $24$ \\
$8$ & $\Z[\frac{1+\sqrt{5}}{2}]$ & $1$ & $1$ & $16$ & $8b$ & $16$ \\
 & $\Z[\frac{1+3\sqrt{5}}{2}]$ & $3$ & $1$ & $64$ & $8a$ & $64$ \\
$9$ & $\Z[\sqrt{15}]$ & $1$ & $2$ & $72$ & $9ab$ & $72$ \\ 
$10$ & $\Z[\frac{1+\sqrt{77}}{2}]$ & $1$ & $1$ & $96$ & $10a$ & $96$ \\
$11$ & $\Z[\sqrt{6}]$ & $1$ & $1$ & $80$ & $11c$ & $80$ \\
 & $\Z[2\sqrt{6}]$ & $2$ & $2$ & $160$ & $11ab$ & $160$ \\ 
$12$ & $\Z[\frac{1+\sqrt{13}}{2}]$ & $1$ & $1$ & $32$ & $12b$ & $32$ \\
 & $\Z[\frac{1+3\sqrt{13}}{2}]$ & $3$ & $1$ & $96$ & $12a$ & $96$ \\
$13$ & $\Z[\sqrt{35}]$ & $1$ & $2$ & $192$ & $13ab$ & $192$ \\
$14$ & $\Z[\frac{1+\sqrt{165}}{2}]$ & $1$ & $2$ & $288$ & $14ab$ & $288$ \\
$15$ & $\Z[\sqrt{3}]$ & $1$ & $1$ & $48$ & $15d$ & $48$ \\ 
 & $\Z[2\sqrt{3}]$ & $2$ & $1$ & $96$ & $15b$ & $96$ \\ 
 & $\Z[4\sqrt{3}]$ & $4$ & $2$ & $192$ & $15ac$ & $192$ \\
\hline
$35$ & $\Z[\sqrt{2}]$ & $1$ & $1$ & $72$ & $35j$ & $72$ \\ 
 & $\Z[2\sqrt{2}]$ & $2$ & $1$ & $144 $ & $35i$ & $144$ \\ 
 & $\Z[3\sqrt{2}]$ & $3$ & $1$ & $288$ & $35c$ & $288$ \\ 
 & $\Z[4\sqrt{2}]$ & $4$ & $1$ & $288$ & $35h$ & $288$ \\ 
 & $\Z[6\sqrt{2}]$ & $6$ & $2$ & $576$ & $35af$ & $576$ \\ 
 & $\Z[12\sqrt{2}]$ & $12$ & $4$ & $1152$ & $35bcdg$ & $1152$ \\
\hline
\end{tabular}
\medskip
\caption{Table of $d$, orders $\OO'=\OO_{(f')^2\Delta_0}$ with $\Delta_d = (d+1)(d-3) = f^2\Delta_0$, class numbers $\hcn{\OO'}$, degrees of fields $\tE_{d,f'}$, and index of fields $[\fieldproj{S} : K]$, $K= \Q(\sqrt{\Delta_d})$.}
\label{table:classfield-1a}
\end{table}
\egroup

\subsection{Verification of cases of the Order-to-Multiplet Conjecture}\label{sec:63}

We consider whether the bijection $\MM$ in \Cref{conj:sic-to-order0} can be determined from numerical data. We have performed a test of the conjecture on numerical invariants:~We did not check inclusions $\fieldproj{S_1} \subseteq \fieldproj{S_2}$ directly, but we did check divisibility of the degrees of the fields $\left.[\fieldproj{S_1}: \Q] \,\middle|\, [\fieldproj{S_2}: \Q]\right.$.

However, there would be unresolvable ambiguity in any case where there are two distinct multiplets $[[S_1]]$ and $[[S_2]]$ having
\begin{enumerate}
\item identical sizes $\abs{[[S_1]]} = \abs{[[S_2]]}$, and
\item identical projection fields $\fieldproj{S_1} = \fieldproj{S_2}$.
\end{enumerate}

Such an ambiguity can occur. It was discovered experimentally by Markus Grassl \cite[unpublished]{grasslpersonal} that two distinct multiplets $[[S_1]]\neq[[S_2]]$ can have the same associated projection SIC field $\fieldproj{S_1}=\fieldproj{S_2}$. The identified ambiguous multiplets occurred in dimensions $d \in \{47, 67, 259\}$, where in each case, there is a secondary multiplet having the same projection SIC field as the minimal multiplet (conjecturally corresponding to $\OO'=\OO_K$ or $f'=1$). Algebraic SICs have currently not been computed in every dimension up to $d=259$, but rather, Grassl has computed them is certain dimensions (including $d=259$) where the projection field has small enough expected degree to permit the calculation. 

In \Cref{sec:degeneration}, we report theoretical results (proved elsewhere) classifying when inclusions of the ray class fields $\tilde{E}_{d,f'}$ for fixed $d$ can be equalities; see \Cref{table:degeneration}. The truth of \Cref{conj:14} implies that the map $\MM$ is determined uniquely whenever $\sqfreepart(\Delta) \not\equiv 1 \Mod{8}$, where $\sqfreepart(r)$ for a positive integer $r$ denotes the squarefree part of $r$; see \Cref{thm:density-1mod8}. We show the set of such $d$ has natural density $\frac{47}{48}$; see \Cref{prop:density-18a}. We determine in \Cref{subsec:85a} that inclusions of ``SIC field candidate'' ray class fields of orders (those that \Cref{conj:14} predicts to equal $\fieldproj{S}$) do degenerate to the identity map in each of the dimensions $d \in \{47, 67, 259\}$ observed by Grassl. We also show such degeneration in dimensions $d \in \{83, 275, 211, 303, 339, 431, 447, 467\}$, in the range $4 \le d \le 500$. (It remains to numerically test these predictions when exact SICs are found in these dimensions.)

\section{Theorems on sizes of class monoids of real quadratic fields}\label{sec:class-monoids}

In this section, we state rigorous results about the behavior of $\abs{\Clm(\OO)}$ for orders with discriminants $\Delta_d = (d+1)(d-3)$. These results have conditional consequences for SICs via \Cref{conj:count}.

The connection between SICs and class numbers gives us a precise prediction for the number of Weyl--Heisenberg SICs up to geometric equivalence, which is $\abs{\Clm(\OO_{\Delta_d})}$. In this section, we use known results on the growth of class numbers to derive conditional results on the number of Weyl--Heisenberg SICs as $d \to \infty$.

The first growth result characterizes those dimensions in which our conjectures predict a unique Weyl--Heisenberg SIC, using a theorem of Byeon, Kim, and Lee \cite{BKL}; see also \cite[Thm.~1.5]{KL-UGO1}. 

\begin{prop}\label{prop:twentytwo}
One has $\abs{\Clm(\OO_{\!\Delta_d})}>1$ for all $d > 22$. The values of $d \ne 3$ with $\abs{\Clm(\OO_{\!\Delta_d})}=1$ are $d \in \{1,2,4,5,6,10,22\}$.
\end{prop}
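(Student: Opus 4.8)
The plan is first to reduce the statement to the computation of honest class numbers of real quadratic fields, then to recognize the relevant discriminants as Richaud--Degert type and invoke the unconditional classification of Byeon, Kim, and Lee \cite{BKL}.

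First I would dispose of the non-fundamental discriminants. Writing $\Delta_d = f^2\Delta_0$ with $\Delta_0$ a fundamental discriminant, \Cref{thm:equiv} gives $\abs{\Clm(\OO_{\Delta_d})} = \sum_{f'\mid f} h_{(f')^2\Delta_0}$ with every summand $\geq 1$. Hence if $f \geq 2$ this sum contains at least the two distinct terms $h_{\Delta_0}$ and $h_{\Delta_d}$, so $\abs{\Clm(\OO_{\Delta_d})} \geq 2$ immediately. It therefore suffices to treat those $d$ for which $\Delta_d$ is itself a fundamental discriminant, where $\abs{\Clm(\OO_{\Delta_d})} = h(K_d)$, the (wide) class number of $K_d = \Q(\sqrt{\Delta_d})$.

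Next I would record the small cases directly. For $d \in \{1,2\}$ the discriminant $\Delta_d \in \{-4,-3\}$ is imaginary quadratic of class number one, $d = 3$ is excluded, and for $4 \leq d \leq 22$ the values in \Cref{table:classgroup} show $\abs{\Clm(\OO_{\Delta_d})} = 1$ exactly for $d \in \{4,5,6,10,22\}$. The substantive content of the proposition is thus: $h(K_d) \geq 2$ for every $d > 22$ with $\Delta_d$ fundamental. The key observation is that $\Delta_d = (d+1)(d-3) = (d-1)^2 - 4$, so with $m = d-1$ these are discriminants of Richaud--Degert type $m^2 - 4$, carrying the small unit $\varepsilon_d = \tfrac12\bigl(m + \sqrt{\Delta_d}\bigr)$ of norm $\tfrac14\bigl(m^2 - (m^2-4)\bigr) = 1$ and regulator $\log\varepsilon_d = O(\log d)$. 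The theorem of Byeon, Kim, and Lee \cite{BKL} gives the complete finite list of real quadratic fields of Richaud--Degert type with class number one; intersecting this list with the subfamily $\{\Q(\sqrt{(d-1)^2-4}) : \Delta_d \text{ fundamental}\}$ leaves exactly $\Delta_d \in \{5,12,21,77,437\}$, i.e.\ $d \in \{4,5,6,10,22\}$, and in particular $h(K_d)\geq 2$ for all $d > 22$. Combined with the two previous reductions, this proves both halves of the statement.

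The step I expect to be the main obstacle --- and the reason \cite{BKL} is genuinely needed rather than a soft argument --- is the requirement of an \emph{unconditional and effective} lower bound. The analytic class number formula with Siegel's bound $L(1,\chi_{\Delta_d}) \gg_\varepsilon \Delta_d^{-\varepsilon}$ and $\log\varepsilon_d = O(\log d)$ already yields $h(K_d) \gg_\varepsilon \Delta_d^{1/2-\varepsilon}/\log\Delta_d \to \infty$, but ineffectively, so it cannot by itself isolate the threshold $d = 22$; and earlier determinations of the Richaud--Degert class-number-one list were conditional on the relevant Riemann hypothesis. Invoking the unconditional theorem of \cite{BKL} is what closes this gap, after which the remaining work is only the bookkeeping of matching their list against the sequence $\Delta_d = (d-1)^2-4$ and the finite check for $4 \leq d \leq 22$ against \Cref{table:classgroup}.
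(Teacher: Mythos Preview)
Your proposal is correct and follows essentially the same route as the paper: reduce to fundamental $\Delta_d$ via the decomposition $\abs{\Clm(\OO_{\Delta_d})}=\sum_{f'\mid f}h_{(f')^2\Delta_0}$, rewrite $\Delta_d=(d-1)^2-4$, and invoke Byeon--Kim--Lee's resolution of the class number one problem for this family (Mollin's conjecture), with a finite check for small $d$. Your discussion of why an effective input such as \cite{BKL} is genuinely required is a nice addition not spelled out in the paper's terse proof.
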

\begin{proof}
Set $\Delta = \Delta_d = (d+1)(d-3)$ and $s(d) := \abs{\Clm(\OO_{\!\Delta})}$; we are trying to determine when $s(d)=1$. By \Cref{thm:equiv}, 
\begin{equation}
s(d) = \sum_{f'|f} \hcn{(f')^2\Delta_0}.
\end{equation}
where $\Delta = f^2\Delta_0$ and $\Delta_0$ is fundamental. If $f>1$, then $s(d)>1$. 

If $f=1$, then $\Delta=\Delta_0$ is fundamental. Write $\Delta = (d-1)^2-4 = n^2-4$, where $n:=d-1$. Byeon, Kim, and Lee solved the class number 1 problem for real quadratic fields with fundamental discriminant of the form $n^2-4$, a result previously known as Mollin's conjecture \cite[Thm~1.2]{BKL}; whenever $n>21$ (so $d>22$), $s(d)=h_\Delta>1$. The list of dimensions where $s(d)=1$ follows from a finite calculation of class numbers.
\end{proof}

\begin{cor}\label{cor:twentytwo}
Assuming \Cref{conj:count}, there is more than one $\PEC(d)$-class of Weyl--Heisenberg SIC in dimension $d$ for all $d > 22$. The dimensions $d$ with a unique $\PEC(d)$-class of Weyl--Heisenberg SIC are $d \in \{1,2,4,5,6,10,22\}$.
\end{cor}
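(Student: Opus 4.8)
The plan is to obtain this as an immediate consequence of the Geometric Count Conjecture (\Cref{conj:count}) combined with \Cref{prop:twentytwo}. First I would invoke \Cref{conj:count}: under its hypothesis it supplies the exact equality $\abs{\WHSIC{d}/\PEC(d)} = \abs{\ClM(\OO_{\Delta_d})}$ for every $d \neq 3$, where $\Delta_d = (d+1)(d-3)$. This translates the assertion about counting $\PEC(d)$-equivalence classes of Weyl--Heisenberg SICs into the corresponding assertion about the cardinality of the ideal class monoid of the quadratic order $\OO_{\Delta_d}$.

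Next I would apply \Cref{prop:twentytwo} directly. It states that $\abs{\ClM(\OO_{\Delta_d})} > 1$ for all $d > 22$, and that the values $d \neq 3$ for which $\abs{\ClM(\OO_{\Delta_d})} = 1$ are exactly $d \in \{1,2,4,5,6,10,22\}$. Combining with the previous paragraph, $\abs{\WHSIC{d}/\PEC(d)} > 1$ for all $d > 22$, and this count equals $1$ precisely for $d \in \{1,2,4,5,6,10,22\}$. The only loose end is $d = 3$, which is excluded from \Cref{conj:count}; but $\WHSIC{3}/\PEC(3)$ is infinite (the one-parameter family recalled after \Cref{conj:count}), so dimension $3$ does not have a unique geometric equivalence class and hence correctly fails to appear in the list. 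This completes the characterization.

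There is essentially no obstacle remaining at the level of the corollary: all the mathematical content lives in \Cref{prop:twentytwo}, which rests on the Byeon--Kim--Lee resolution of Mollin's conjecture for real quadratic fields of discriminant $n^2 - 4$ with class number one, together with the elementary observation that a nontrivial conductor $f > 1$ already forces $\abs{\ClM(\OO_{\Delta_d})} > 1$ via the decomposition $\ClM(\OO_{\Delta_d}) \isom \bigsqcup_{f' \mid f} \Cl(\OO_{(f')^2\Delta_0})$ of \Cref{thm:equiv}, plus a finite class-number computation (visible in \Cref{table:classgroup}) to pin down the exceptional list. Since \Cref{prop:twentytwo} is already established in the excerpt, the corollary is a one-line deduction, the only ``work'' being the bookkeeping of the $d=3$ case and the finite list just described.
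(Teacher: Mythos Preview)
Your proposal is correct and follows the same route as the paper: the paper's proof is the single sentence ``The corollary follows from \Cref{prop:twentytwo},'' and your argument is simply an expanded version of that, with the (harmless) extra bookkeeping about $d=3$.
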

\begin{proof}
The corollary follows from \Cref{prop:twentytwo} combined with \Cref{conj:count}.
\end{proof}

The second growth result is an asymptotic formula for the size of the ray class monoid of $\OO_{\!\Delta_d}$.
\begin{thm}\label{thm:bs}
As $d \to \infty$, and $\Delta_d = (d+1)(d-3)$, the size of the class monoid of $\OO_{\!\Delta_d}$ obeys the asymptotic formula
\begin{equation}
\log \abs{\Clm(\OO_{\!\Delta_d})} = \log d + o(\log d).
\end{equation}
\end{thm}
\begin{proof}
This is proved in \cite{KL-UGO2}, in which it follows easily from \cite[Thm.~1.2]{KL-UGO1}. The proof in \cite{KL-UGO1} uses an extension of the original Siegel form of the Brauer--Siegel theorem to quadratic orders, which is due to L.-K.~Hua \cite[Thm.~12.15.4]{Hua:1982}. The Brauer--Siegel theorem, in Siegel's original form \cite{siegel}, is an (ineffective) estimate of the growth of the product of the class number and the regulator for quadratic maximal orders. The presented result for orders must account for the class numbers of non-maximal orders. The conductor $f = f_{\Delta_d}$ is an extra degree of freedom not present in the Siegel theorem.
\end{proof}

\begin{cor}\label{cor:sicno}
Assume \Cref{conj:count}. Then, as $d \to \infty$,
\begin{equation}
\log \abs{{\rm WHSIC}(d)/\PEC(d)} = \log d + o(\log d).
\end{equation}
\end{cor}
\begin{proof}
The corollary follows from \Cref{thm:bs} combined with the conjecture.
\end{proof}

\section{Theorems on strictness or nonstrictness of inclusions of ray class fields of orders}\label{sec:degeneration}

In \cite{KL-UGO2} we prove unconditional results determining exactly when the ray class fields $\tE_{d,f'}$ introduced in \Cref{defn:Srcfo} can coincide for fixed $d$ and varying $f'$ (respectively for $E_{d,f'}$ for fixed $d$ and varying $f'$). We call such a coincidence a \textit{degeneration} of ray class field inclusions.

The first theorem gives numerical criteria on $d$ that determine all dimensions $d \ge 4$ in which a degeneration occurs. To state it, recall that a positive integer $r$ can be uniquely factored as $r=st^2$, where $t^2$ is the largest square factor of $r$. We call $s$ the {\em squarefree part} of $r$ and set $\sqfreepart(r)=s$.

\begin{thm}\label{thm:density-1mod8}
The set of $d \ge 4$ such that, for $\Delta = \Delta_d = (d+1)(d-3)$ and $\varepsilon_d = \frac{d-1 + \sqrt{\Delta}}{2}$, there exist distinct orders $\OO \neq \OO'$ with 
\begin{align}
\Z[\e_d] \subseteq \OO \subseteq \OO_{\Q(\sqrt{\Delta})}, && \Z[\e_d] \subseteq \OO' \subseteq \OO_{\Q(\sqrt{\Delta})},
\end{align}
having $H_{d\OO, \rS}^{\OO} = H_{d\OO', \rS}^{\OO'}$ for some $\rS$ (equivalently, all $\rS$) is exactly the set of $d$ that satisfy any of the following three equivalent numerical conditions.
\begin{enumerate}
\item[(1)]
The fundamental discriminant $(\Delta_d)_0$ associated to $\Delta_d$ satisfies: $(\Delta_d)_0 \equiv 1 \Mod{8}$.
\item[(2)]
$\Delta_d$ satisfies: $\sqfreepart{(\Delta_d)} \equiv 1 \Mod{8}$.
\item[(3)]
$\Delta_d = 2^{2k} D$ for some integer $k \ge 0$, with $D \equiv 1 \Mod{8}$.
\end{enumerate}
\end{thm}

The second theorem classifies all pairs $(d,f)$ where degeneration occurs.

\begin{thm}\label{thm:99}
Let $d \geq 4$. Write $\Delta = \Delta_d = (d+1)(d-3) = f^2\Delta_0$ for $\Delta_0:= (\Delta_d)_0$ a fundamental discriminant, and suppose $f',f''$ are positive integers satisfying $f' \div f$, $f'' \div f$ and $f'' < f'$. The following are equivalent:
\begin{itemize}
\item[(1)] $\tE_{d,f'} = \tE_{d,f''}$
\item[(2)] $E_{d,f'} = E_{d,f''}$
\item[(3)] $(\Delta_d)_0 \con 1 \Mod{8}$, with $f'=2f''$, and $f''$ is odd.
\item[(4)] $\sqfreepart(\Delta_d) \con 1 \Mod{8}$, with $f'=2f''$, and $f''$ is odd.
\end{itemize}
\end{thm}

A further result shows that the numerical condition $\sqfreepart(\Delta_d) \equiv 1 \Mod{8}$ of \Cref{thm:density-1mod8} is satisfied for a set of $d$ having natural density $\frac{1}{48}$. 

\begin{prop}\label{prop:density-18a}
We have 
\begin{equation}
\lim_{X \to \infty} \frac{\abs{\{d \in \Z \cap [1,X] : \sqfreepart((d+1)(d-3)) \con 1 \Mod{8}\}}}{X} = \frac{1}{48}.
\end{equation}
\end{prop}

Proofs of \Cref{thm:density-1mod8}, \Cref{thm:99}, and \Cref{prop:density-18a} may be found in \cite{KL-UGO2}. The proof of \Cref{thm:density-1mod8} shows that, if dimension $d$ has a degeneration, there is one having $f''=1$ and $f'=2$, and necessarily $d \equiv 3 \Mod{4}$ and $\Delta_d \equiv 0 \Mod{16}$. 

\subsection{Numerical data on degeneration for ray class fields of orders}\label{subsec:85a}

We present data on the implications of \Cref{thm:99} for $4 \le d \le 500$. In \Cref{table:degeneration}, we use the conditions of \Cref{thm:99} to write down all cases of equality of ray class fields of relevant orders $\tE_{d,f''} = \tE_{d,f'}$ with $(f'', f') = (f'', 2f'')$. For $d \leq 500$, equality of ray class fields $\tE_{d,f''} = \tE_{d,f'}$ occurs for 
\begin{equation} 
d \in \{47, 67, 83, 175, 211, 259, 303, 339, 431, 447, 467\}.
\end{equation}
For every such $d$, the conductor pair $(1,2)$ degenerates (that is, gives the same field ray class field in the manner of \Cref{thm:99}), and an additional conductor pair degenerates in three of these dimensions. This data is consistent with, and accounts for, Grassl's data for collapse of SIC field inclusions in dimensions $\{47, 67, 259\}$.

\bgroup
\def\arraystretch{1.1}
\begin{table}[ht!]
\centering
\begin{tabular}{|||c||c|c|c|c||}
\hline
$d$ & $\sqrt{\Delta_0}$ & $f'',f'$ & $h$ & $[\tE_{d,f''} : \Q(\sqrt{\Delta_0})]$ \\
\hhline{|||=||=|=|=|=||}
$47$ & $33$ & $1,2$ & $1$ & $1472$ \\
\hline
$67$ & $17$ & $1,2$ & $1$ & $1452$ \\
\hline
$83$ & $105$ & $1,2$ & $2$ & $9184$ \\
\hline
$175$ & $473$ & $1,2$ & $3$ & $43200$ \\
\hline
$211$ & $689$ & $1,2$ & $4$ & $117600$ \\
\hline
$259$ & $65$ & $1,2$ & $2$ & $31104$ \\
\hline
$303$ & $57$ & $1,2$ & $1$ & $40800$ \\
\hline
\end{tabular}\!\!
\begin{tabular}{||c||c|c|c|c|||}
\hline
$d$ & $\sqrt{\Delta_0}$ & $f'',f'$ & $h$ & $[\tE_{d,f''} : \Q(\sqrt{\Delta_0})]$ \\
\hhline{||=||=|=|=|=|||}
$303$ & $57$ & $5,10$ & $6$ & $244800$ \\
\hline
$339$ & $1785$ & $1,2$ & $8$ & $408576$ \\
\hline
$431$ & $321$ & $1,2$ & $3$ & $371520$ \\
\hline
$431$ & $321$ & $3,6$ & $9$ & $1114560$ \\
\hline
$447$ & $777$ & $1,2$ & $4$ & $355200$ \\
\hline
$467$ & $377$ & $1,2$ & $2$ & $290784$ \\
\hline
$467$ & $377$ & $3,6$ & $8$ & $1163136$ \\
\hline
\end{tabular}
\medskip
\caption{
Dimensions ~$d \leq 500$ with an equality of ray class fields of orders $\tE_{d,f''} = \tE_{d,f'}$. The columns show the fund.~disc.~$\Delta_0$, each pair of conductors $(f'',f')=(f'',2f'')$ exhibiting field equality, the order class number $h = \hcn{(f'')^2\Delta_0} = \hcn{(f')^2\Delta_0}$, and ray class field extension degree $[\tE_{d,f''} : \Q(\sqrt{\Delta_0})]=[\tE_{d,f'} : \Q(\sqrt{\Delta_0})]$.}
\label{table:degeneration}
\end{table}
\egroup

\section{Concluding remarks}\label{sec:9}

We state a conjecture refining \Cref{conj:sic-to-order0} and \Cref{conj:14}. We also discuss other number fields attached to SICs that have been studied in the literature.

\subsection{The Ideal-Class-to-SICs Conjecture}\label{subsec:91}

We formulate a final conjecture that refines \Cref{conj:sic-to-order0} and \Cref{conj:14} by insisting on compatibility with the Artin map. We formulate \Cref{conj:class} because its truth would provide a unifying principle explaining the structure of all the other conjectures. In this paper, we present no mechanism to construct a candidate map $\Omega$ made in its statement. 

To state this conjecture, we decompose 
\begin{equation}
\ClM(\OO_{\!\Delta}) = \bigsqcup_{f'|f} \Cl(\OO_{(f')^2\Delta_0})
\end{equation}
by means of \Cref{thm:equiv}. 

\begin{conj}[Ideal-Class-to-SIC Conjecture]\label{conj:class}
Fix a positive integer $d > 3$, let $\Delta = \Delta_d = (d+1)(d-3)$, and write $\Delta = f^2 \Delta_0$ where $\Delta_0$ is a fundamental discriminant of a quadratic field. Then there is a bijection $\Omega$, 
\begin{align}
\Omega: \bigsqcup_{f'|f} \Cl(\OO_{(f')^2\Delta_0}) \xrightarrow{\Omega} \WHSIC{d}/\PEC(d),
\end{align}
under which an ideal class $\AA \in \Cl(\OO')$ with $\OO'=\OO_{(f')^2\Delta_0}$ maps to a geometric equivalence class $\Omega(\AA)=[S_\AA]$ with associated fields $\fieldtrip{S_\AA}=E_{d,f'}$ and $\fieldproj{S_\AA}=\tE_{d,f'}$.

Furthermore, $\Omega$ can be chosen to be compatible with the Artin map (defined in \Cref{thm:main3-KL})
\begin{equation}
\Art_{\OO'} : \Cl_{d'\OO',\{\infty_1,\infty_2\}}(\OO') \to \Gal(\tE_{d,f'}/\Q(\Delta_0)),
\end{equation}
in the sense that 
\begin{equation}
[S_{\bar{\AA}\bar{\BB}}] = [(\Art_{\OO'}(\AA))(S_{\bar{\BB}})] \mbox{ for any } \AA, \BB \in \Cl_{d'\OO',\{\infty_1,\infty_2\}}(\OO')
\end{equation}
and their images $\bar{\AA}, \bar{\BB} \in \Cl(\OO')$ under the natural quotient map.
\end{conj}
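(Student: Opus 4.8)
\emph{Remark on what can be proved.} Since \Cref{conj:class} contains \Cref{conj:strongZ}, \Cref{conj:WHB}, and \Cref{conj:fields} as special cases, an unconditional proof is out of reach; the realistic target is a \emph{conditional} theorem whose hypothesis is an explicit construction of Weyl--Heisenberg fiducial vectors from Stark units of the kind proposed by the first author \cite{koppsic} and by Appleby--Bengtsson--Grassl--Harrison--McConnell \cite{abghm} and Bengtsson--Grassl--McConnell \cite{bgm}. The plan is to proceed in four stages. In the first stage I would, for each $f'\mid f$ and each class $\AA$ of the ray class group $\Cl_{d'\OO',\{\infty_1,\infty_2\}}(\OO')$ with $\OO' = \OO_{(f')^2\Delta_0}$, write down a candidate unit vector $\vv_\AA \in \C^d$ whose entries are explicit combinations of Stark units attached to $\AA$ (equivalently, of derivatives at $s=0$ of partial zeta functions of $\OO'$ twisted by ray class characters), and then verify that $\vv_\AA$ is a fiducial. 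Using Weyl--Heisenberg covariance, the $d^4$ identities $\Tr(\Pi_i\Pi_j) = \tfrac{\delta_{ij}d+1}{d+1}$ collapse to a finite set of algebraic relations among these special values, and proving those relations is what I expect to be the central obstacle --- it is exactly the computation carried out in prime dimensions in \cite{koppsic}. I would also need to check that the geometric class of $S_\AA := \WHL(\vv_\AA)$ depends only on the image $\bar\AA \in \Cl(\OO')$; the collapse of the ray class group to the ordinary ring class group should be absorbed by the combined effect of the Clifford symmetry and of rescaling $\vv_\AA$ by a complex scalar.

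Granting the construction, the second stage is the identification of fields. The explicit Stark-unit formula together with the main results on ray class fields of orders (\Cref{thm:main1-KL} and \Cref{thm:main3-KL}) shows that every entry, hence every ratio of entries, of $\vv_\AA$ lies in $\tE_{d,f'} = H^{\OO'}_{d'\OO',\{\infty_1,\infty_2\}}$, so $\fieldvec{S_\AA} \subseteq \tE_{d,f'}$; since the construction makes $\fieldvec{S_\AA}$ stable under complex conjugation, \Cref{prop:sicfields}(1) gives $\fieldproj{S_\AA} = \fieldvec{S_\AA} \subseteq \tE_{d,f'}$. For the reverse inclusion I would run an orbit-and-stabilizer argument: by \Cref{lem:Galois-action-SICs} and the Artin isomorphism $\Cl_{d'\OO',\{\infty_1,\infty_2\}}(\OO') \isom \Gal(\tE_{d,f'}/K)$, the group $\Gal(\tE_{d,f'}/K)$ permutes the family $\{[S_\AA]\}$, this action factors through translation by $\Cl(\OO')$ (by the Artin-translation structure established in the next stage), and once the $[S_\AA]$ are shown pairwise inequivalent the orbit has exactly $\abs{\Cl(\OO')}$ elements; comparing with $[\tE_{d,f'}:K] = \abs{\Cl_{d'\OO',\{\infty_1,\infty_2\}}(\OO')}$ forces $\fieldproj{S_\AA} = \tE_{d,f'}$. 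The triple product field then equals the subfield $E_{d,f'}$ fixed by complex conjugation, via \Cref{prop:triple-prod-field}, the conjugation-invariance of triple products, and $[\tE_{d,f'}:E_{d,f'}] \in \{1,2\}$ (index $2$ precisely when $d$ is even, by \Cref{lem:fieldproj}).

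The third stage establishes the Artin-equivariance $[S_{\bar\AA\bar\BB}] = [(\Art_{\OO'}(\AA))(S_{\bar\BB})]$. I would deduce it from a Shimura-type reciprocity law for the Stark-unit data: applying $\Art_{\OO'}(\AA)$ to the entries of $\vv_\BB$ translates the ray class index by $\AA$, producing a nonzero scalar multiple of $\vv_{\AA\BB}$; passing to the associated lines, then to $\PEC(d)$-orbits, and using the descent to $\Cl(\OO')$ from the first stage, yields the stated identity. The delicate points are that the scalar and the choice of ideal representatives do not interfere, and that the identity --- a priori about classes in the ray class group $\Cl_{d'\OO',\{\infty_1,\infty_2\}}(\OO')$ --- is compatible with its formulation in terms of the images $\bar\AA, \bar\BB$.

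Finally, bijectivity of $\Omega$ follows formally. Injectivity within a fixed $\Cl(\OO')$ comes from the faithfulness of the translation action of the third stage (if $[S_{\bar\AA}] = [S_{\bar\BB}]$ then $\Art_{\OO'}(\AA\BB^{-1})$ fixes $\tE_{d,f'}$, hence $\bar\AA = \bar\BB$), and disjointness of the images of distinct $\Cl(\OO_{(f')^2\Delta_0})$ is exactly the statement that $\MM$ is a bijection on multiplets, which here one would extract from the distinct group-theoretic and field-theoretic data of the orders (\Cref{conj:supercount}, \Cref{conj:fields}), with the handful of genuinely ambiguous cases ($d = 47, 67$) handled by finer invariants. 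Surjectivity then amounts to showing that the constructed family exhausts $\WHSIC{d}/\PEC(d)$ --- equivalently, after \Cref{thm:equiv}, to the count $\abs{\WHSIC{d}/\PEC(d)} = \abs{\ClM(\OO_{\!\Delta})} = \sum_{f'\mid f}\abs{\Cl(\OO_{(f')^2\Delta_0})}$ of \Cref{conj:count}. So the two genuinely hard inputs are: (i) the Stark-unit construction of \emph{Stage 1} together with the proof of the frame identities the $\vv_\AA$ must satisfy, and (ii) the absence of Weyl--Heisenberg SICs outside the constructed family, i.e., \Cref{conj:WHB} (which this strategy would in fact upgrade to \Cref{conj:count}); with those in hand, everything else is a formal consequence of the class field theory of orders developed in \cite{kopplagarias}.
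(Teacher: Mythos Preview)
The statement is a \emph{conjecture}, and the paper offers no proof of it. Immediately after stating \Cref{conj:class}, the authors write: ``We formulate this hypothesis because its truth would provide a unifying principle explaining the structure of all the other conjectures; we present no mechanism to construct a candidate map $\Omega$.'' They further note in \Cref{sec:9} that a \emph{conjectural} construction of a candidate $\Omega$ will appear in work in preparation \cite{afk}. So there is nothing in the paper to compare your proposal against.

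You correctly recognize that an unconditional proof is out of reach and instead sketch a conditional research program based on Stark-unit constructions of fiducials. That program is broadly in the spirit of what the paper points toward (via \cite{koppsic,abghm,bgm,afk}), but it is not a proof and should not be presented as one. Several of your ``stages'' themselves depend on open problems: Stage~1 requires proving the fiducial identities for the Stark-unit vectors (the central difficulty of \cite{koppsic}, unresolved beyond a handful of dimensions); Stage~2's orbit-stabilizer argument presupposes the pairwise inequivalence it is meant to help establish; and Stage~4's surjectivity is literally \Cref{conj:count}, which you invoke as an input rather than derive. Your handling of the degenerate cases is also too optimistic: the paper's \Cref{cor:99} shows that when $\Delta_0\equiv 1\Mod{8}$ the fields $\tE_{d,f'}$ and $\tE_{d,2f'}$ coincide, so ``distinct field-theoretic data of the orders'' cannot separate those multiplets, and you would need a genuinely finer invariant (not just a remark that $d=47,67$ are ``handled''). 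As a roadmap your outline is reasonable; as a proof attempt it has no content beyond restating which conjectures would need to be proven.
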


The required compatibility of $\Omega$ with the Artin map implies that there is exactly one Galois multiplet for each order $\OO$ between $\OO_{\!\Delta_0}$ and $\OO_{\!\Delta}$.

\Cref{conj:class} would define a map $\MM$ of the type given in \Cref{conj:sic-to-order0} by means of the following commutative diagram,
\begin{equation}
\begin{tikzcd}
\ds\bigsqcup_{f'|f} \Cl(\OO_{(f')^2\Delta_0}) \ar[r,"\Omega"] \ar[d,two heads] & \WHSIC{d}/\PEC(d) \ar[d,two heads] \\
\{\mbox{positive divisors of $f$}\} \ar[r,dashed,"\MM"] & \{[[S]] : S \mbox{ a line-SIC in } \C^d\}
\end{tikzcd}
\end{equation}
where the downward maps send $\AA \in \Cl(\OO_{(f')^2\Delta_0})$ to $f'$ and $[S]$ to $[[S]]$, respectively.

\Cref{conj:class} is compatible with conjectures of the first author \cite{koppsic} and other authors \cite{abghm,bgm} connecting SICs to the Stark conjectures for real quadratic fields \cite{STARK77,Stark3}, which posit the existence of special algebraic units constructed from partial zeta functions and carrying a Galois action compatible with the Artin map. Work by Appleby, Flammia, and Kopp \cite{afk} (subsequent to the arXiv publication of the present paper) makes the classification scheme suggested by \Cref{conj:class} explicit. The explicit classification also yields exact formulas for the stabilizers $\stab(S)$ and thus the size of the set $\WHSIC{d}$ discussed in \Cref{rmk:whsic}.

\subsection{Refined SIC fields}\label{subsec:92}

This paper considered fields that are geometric invariants of a SIC. It can be useful in practice to define SICs using data in smaller subfields that are not geometric invariants. One such subfield is the field generated by the entries of a single fiducial projection $\Pi$. That field can be smaller than the full projection field by nearly a factor of $d$ is some special cases, and this is extremely useful for computing exact SICs in some large dimensions that would otherwise be inaccessible to numerical investigation \cite{abghm,bgm}. Another is the field generated by the \textit{SIC overlaps} $\Tr(\Pi D_\p)$ (or else the closely related \textit{SIC overlap phases} $\sqrt{d+1}\Tr(\Pi D_\p)$), where $D_p$ are the {\em displacement operators}, defined as $D_\p = \xi_d^{p_1p_2}X^{p_1}Z^{p_2}$.

The overlap field (and the overlap phase field) both appear (from numerical evidence) to be ray class fields specified by ramification at just one of the infinite places $\infty_1$, provided one chooses a ``strongly centered'' fiducial projection; see \cite{appleby2,DixonS20}. That phenomenon relates to the connection to Stark units, which live in a ray class field with ramification at the other infinite place $\infty_2$ and are (at least in some cases) Galois conjugate to squares of overlap phases.

\section{Acknowledgements}

Work of the first author was supported by NSF DMS grant \#2302514. Work of the second author was supported by NSF DMS grant \#1701576.

We are grateful to the developers of Magma and in particular David Kohel, whose software package \texttt{reduced\_forms} (and whose package \texttt{class\_number}, on which \texttt{reduced\_forms} relies) we used to compute class numbers of real quadratic orders.

We thank Markus Grassl for finding numerical examples of degeneration of SIC fields and sharing those examples with us. We thank Marcus Appleby, Steven Flammia, and Gary McConnell for valuable conversations. We thank Dustin Mixon for suggesting we use the Tarski--Seidenberg theorem in \Cref{sec:C1}. We also acknowledge the contributions of Steve Donnelly, who independently connected SICs to orders in real quadratic number fields and shared his insights with Marcus Appleby, Steven Flammia, and Markus Grassl through informal discussions.

We are grateful to the anonymous referee for many helpful comments and corrections. These comments specifically include \Cref{rmk:47}.

\appendix

\section{Conjectures for triple product fields}\label{sec:A1}

We formulate conjectures about the triple product fields $\fieldtrip{S}$. Recall that $\fieldtrip{S} \subseteq \fieldproj{S}$. We have limited empirical evidence for these conjectures. 

Recall that $E_{d,f'} = H_{d\OO',\{\infty_1,\infty_2\}}^{\OO'}$. Note that $E_{d, f'} = \tE_{d, f'}$ if $d$ is odd, and $E_{d, f'} \subseteq \tE_{d, f'}$ if $d$ is even. 

\begin{conj}[Extended Ray Class Fields of Orders Conjecture]\label{conj:TPfields}
Fix an integer $d \geq 4$ and write $\Delta = \Delta_d = (d+1)(d-3) = f^2\Delta_0$ where $\Delta_0$ is a fundamental discriminant. Then the bijection $\MM$ of \Cref{conj:14} can be chosen to satisfy the properties: If $S$ is a Weyl--Heisenberg SIC with $[S] \in \MM(f')$, then
\begin{align}
\fieldtrip{S} &= E_{d,f'} & \mbox{and} && \fieldvec{S}=\fieldproj{S} &= \tE_{d,f'}. 
\end{align}
\end{conj}

\Cref{conj:TPfields} implies part (1) of the following conjecture, which can be tested against data.

\begin{conj}\label{conj:trip}
Let $d \ge 4$. Then for a Weyl--Heisenberg SIC $S$, 
\begin{itemize}
\item[(1)]
The index 
\begin{equation}
[\fieldproj{S}: \fieldtrip{S}] = \begin{cases}
1 & \mbox{ if } d \mbox{ is odd},\\
2 & \mbox{ if } d \mbox{ is even}.
\end{cases}
\end{equation}
\item[(2)]
$\fieldproj{S} = \fieldtrip{S}(\xi_d)$.
\end{itemize}
\end{conj}

We have numerically verified part (1) of this conjecture, the index computation, for $4 \le d \le 10$. Recall that \Cref{prop:fieldproj}(2) showed unconditionally that $[\fieldproj{S}: \fieldtrip{S}]$ divides $d^2$.

\section{Calculations of ray class groups of orders}\label{sec:C}

This appendix presents a formula, given as \Cref{thm:rclsize} for the order of certain ray class groups of orders, valid for ray class groups of real quadratic orders specifically of the form $\Delta = \Delta_d = (d+1)(d-3)$. Let $K = \Q(\sqrt{\Delta})$. Write $\Delta = f^2 D$, $f \ge 1$, where $D$ is squarefree, so $K= \Q (\sqrt{D})$ and
\begin{equation}
\OO_K = \left\{\frac{1}{2}(a+b\sqrt{D}) : (a,b) \in \Z^2, \, a\equiv b \Mod{2}, \, a \equiv 0 \Mod{2} \mbox{ if } D\not\equiv 1 \Mod{4}\right\}.
\end{equation}
For the datum $(\mm, \Sigma)$ consisting of an ideal $\mm$ of the order $\OO$ in $K$ and a set of real places $\rS$ of $K$, recall the definition of the unit subgroup
\begin{equation}\label{eq:unit-group}
\UU_{\mm, \rS}(\OO) := \left\{\alpha \in \OO^\times : \alpha \equiv 1 \Mod{\mm}, \, \rho(\alpha)>0 \mbox{ for } \rho \in \rS\right\}.
\end{equation}

To state this class number formula, we make a preliminary definition. For the real quadratic field $K$ we define the {\em number field totient function} $\varphi_K(n) = \abs{\left(\OO_K/n\OO_K\right)^\times}$. It is computed by the following formula.

\begin{lem}
Let $K$ be a quadratic field of discriminant $\Delta_0$. Then
\begin{equation}\label{eq:phiK}
\varphi_K(n) = n^2 \prod_{p|n} \left(1-\frac{1}{p}\right)\left(1-\left(\frac{\Delta_0}{p}\right)\frac{1}{p}\right),
\end{equation}
where the product is over primes dividing $n$, and $\left(\frac{\Delta_0}{p}\right)$ is the Kronecker symbol.
\end{lem}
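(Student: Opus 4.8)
The plan is to reduce to the case of a prime power by multiplicativity and then separate into the three possible splitting types of a prime in a quadratic field.

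First I would note that both sides of \eqref{eq:phiK} are multiplicative in $n$: the Chinese remainder theorem gives a ring isomorphism $\OO_K/n\OO_K \isom \prod_{p^{k}\ddiv n}\OO_K/p^{k}\OO_K$, hence a product decomposition of unit groups, while the right-hand side is visibly a product over the prime powers exactly dividing $n$. So it suffices to prove
\begin{equation}
\abs{\left(\OO_K/p^{k}\OO_K\right)^\times} = p^{2k}\left(1-\frac1p\right)\left(1-\left(\frac{\Delta_0}{p}\right)\frac1p\right)
\end{equation}
for every rational prime $p$ and every $k\ge 1$.

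Second I would invoke the classical correspondence between the factorization of $p$ in $\OO_K$ and the Kronecker symbol $\left(\frac{\Delta_0}{p}\right)$, which holds for $p=2$ as well (for a fundamental discriminant $\Delta_0$ the value $\left(\frac{\Delta_0}{2}\right)$ is, by definition, arranged to record the splitting type of $2$): one has $p\OO_K=\pp_1\pp_2$ with $\pp_1\ne\pp_2$ when $\left(\frac{\Delta_0}{p}\right)=1$; $p\OO_K=\pp$ is prime when $\left(\frac{\Delta_0}{p}\right)=-1$; and $p\OO_K=\pp^2$ when $\left(\frac{\Delta_0}{p}\right)=0$. In the split case a further application of the Chinese remainder theorem gives $\OO_K/p^{k}\OO_K\isom\OO_K/\pp_1^{k}\times\OO_K/\pp_2^{k}\isom(\Z/p^{k}\Z)^2$, whose unit group has order $(p^{k}-p^{k-1})^2=p^{2k}(1-\frac1p)^2$. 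In the inert case $\OO_K/p^{k}\OO_K=\OO_K/\pp^{k}$ is a local ring with residue field of order $p^2$ and maximal ideal of index $p^2$, so its unit group has order $p^{2k}-p^{2k-2}=p^{2k}(1-\frac1{p^2})$. In the ramified case $\OO_K/p^{k}\OO_K=\OO_K/\pp^{2k}$ is a local ring with residue field of order $p$ and maximal ideal of index $p$, so its unit group has order $p^{2k}-p^{2k-1}=p^{2k}(1-\frac1p)$.

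Third I would simply match each of these three values against the claimed formula: $(1-\frac1p)(1-\frac1p)$ when $\left(\frac{\Delta_0}{p}\right)=1$, $(1-\frac1p)(1+\frac1p)=1-\frac1{p^2}$ when $\left(\frac{\Delta_0}{p}\right)=-1$, and $(1-\frac1p)(1-0)=1-\frac1p$ when $\left(\frac{\Delta_0}{p}\right)=0$. This proves the prime-power case, and multiplicativity finishes the proof. The result is classical and I do not anticipate any genuine obstacle; the only point requiring a moment's care is $p=2$, where one must recall that the Kronecker symbol $\left(\frac{\Delta_0}{2}\right)$ (equal to $0$, $1$, $-1$ according as $2\mid\Delta_0$, $\Delta_0\equiv 1\Mod{8}$, $\Delta_0\equiv 5\Mod{8}$) is defined precisely so as to encode the splitting of $2$ in $\OO_K$, so that no case is exceptional.
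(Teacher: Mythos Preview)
Your proposal is correct and follows essentially the same approach as the paper's proof, which merely sketches: multiplicativity via the Chinese remainder theorem, then a case split on prime powers according to the splitting type of $p$ in $K$ as encoded by the Kronecker symbol. You have simply supplied the details that the paper omits.
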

\begin{proof}
The function $\varphi_K(n)$ is multiplicative by the Chinese remainder theorem. It is computed on prime powers by splitting into cases based on the splitting behavior of $p$ in $K$, which is measured by the Kronecker symbol. A straightforward calculation yields \eqref{eq:phiK}. 
\end{proof}

The main result of this Appendix is the following formula for the cardinality of various ray class groups $\abs{\Cl_{d'\OO',\rS}(\OO')}$. Recall that for $K$ a real quadratic field, the {\em regulator} $\Reg_K$ of $K$ takes the value $\log \e$, where $\e=\e_K>1$ is the fundamental unit of $\OO_K$.

\begin{thm}\label{thm:rclsize}
Let $d \geq 4$ and let $\Delta = \Delta_d = (d+1)(d-3) = f^2\Delta_0$ for some fundamental discriminant $\Delta_0$. Let $K = \Q(\sqrt{\Delta})$. Let $\e_d = \frac{(d-1)+\sqrt{\Delta}}{2}$, and let $\OO'$ be a $K$-order such that $\Z[\e_d] \subseteq \OO' \subseteq \OO_K$, that is, with $\disc(\OO') = (f')^2\Delta_0$ for some $f'\div f$.
\begin{itemize}
\item[(1)] Then
\begin{equation}\label{eq:rclsize}
\abs{\Cl_{d\OO',\rS}(\OO')} 
= 
\begin{cases}
\dfrac{2^{\abs{\rS}} h_K \Reg_K \varphi_K(df')}{9 \log(\e_d) \varphi(f')}, & \mbox{if } d \con 3 \Mod{9} \mbox{ and } 3\div f', \vspace{5pt}\\
\dfrac{2^{\abs{\rS}} \hcn{K} \Reg_K \varphi_K(df')}{6 \log(\e_d) \varphi(f')}, & \mbox{otherwise}.
\end{cases}
\end{equation}
\item[(2)] If $d$ is even, then $\abs{\Cl_{2d\OO',\rS}(\OO')} = 2\abs{\Cl_{d\OO',\rS}(\OO')}$.
\end{itemize}
\end{thm}

\Cref{thm:rclsize} depends on the following lemma concerning unit groups of orders, which is special to the family $\Delta= (d+1)(d-3)$ of real quadratic orders, with $d \ge 4$.

\begin{lem}\label{lem:units}
Let $d \geq 4$, $\Delta = \Delta_d = (d+1)(d-3)$, and $\e_d = \frac{d-1+\sqrt{\Delta}}{2}$. Let $\rS$ be a subset of the real embedding of $\Q(\sqrt{\Delta})$. Fix an order $\OO$ satisfying
\begin{equation}
\OO_{\!\Delta} :=\Z[\e_d] \subseteq \OO \subseteq \OO_{\Q(\sqrt{\Delta})}.
\end{equation}
Then,
\begin{equation}\label{eq:71claim}
\UU_{d\OO, \rS}(\OO) = \langle \e_d^3 \rangle.
\end{equation}
\end{lem}

For \Cref{lem:units}, the unit group of interest is $\UU_{d\OO, \rS}(\OO)$. The unit $\e_d$ is totally positive, and 
\begin{equation}
\e_d^3= \frac{1}{2} \left((d-1)(d^2-2d-2) + d(d-2) f \sqrt{D}\right) \equiv 1 \Mod{d\OO_{\!\Delta}}.
\end{equation}
Thus, we have the containment relations
\begin{equation}
\langle \e_d^3 \rangle \subseteq \UU_{d\OO_{\!\Delta}, \{\infty_1, \infty_2\} } \subseteq \UU_{d\OO, \rS}(\OO) \subseteq \UU_{d\OO_K}(\OO_K).
\end{equation}
These relations show it suffices to prove \eqref{eq:71claim} for the special case $\OO = \OO_K$ with $\rS = \emptyset$.

\begin{proof}[Proof of \Cref{lem:units}]
We omit this proof, which is shown in \cite{KL-UGO2}, taking $d=n+1$. This lemma is also the case $m=1$ of \cite[Lem.~6.5]{afk}, where is different proof is given via an embedding of the order into a matrix algebra.
\end{proof} 

Now we prove \Cref{thm:rclsize}.
\begin{proof}[Proof of \Cref{thm:rclsize}]
Let $\td = d$ if $d$ is odd and $\td \in \{d,2d\}$ if $d$ is even. By a general theorem for ray class fields for orders, given as \cite[Thm.~5.6]{kopplagarias}, the cardinality of the ray class group is
\begin{equation}\label{eq:rcls0}
\abs{\Cl_{\td\OO',\rS}(\OO')} 
= \frac{\hcn{K}}{\left[\OO_K^\times : \UU_{\td\OO',\rS}(\OO')\right]} \cdot \frac{2^{\abs{\rS}} \abs{\left(\OO_K\middle/\colonideal{\td\OO'}{\OO_K}\right)^\times}}{\abs{\UU_{\td\OO'}\!\left(\OO'\middle/\colonideal{\td\OO'}{\OO_K}\right)}}.
\end{equation}
By \Cref{lem:units}, $\UU_{d\OO',\rS}(\OO') = \langle \e_d^3 \rangle$. On the other hand, when $\td = 2d$, 
\begin{equation}
e_d^3 = -d+1 + (d^2-2d)\e_d \equiv -d+1 \not\equiv 1 \Mod{2d},
\end{equation} 
but $\e_d^6 \equiv 1 \Mod{2d}$, so $\UU_{\td\OO',\rS}(\OO') = \langle \e_d^6 \rangle$. Moreover, $\OO_K^\times = \langle -1, \e_K \rangle$ for some fundamental unit $\e_K$. Thus,
\begin{equation}\label{eq:rcls1}
\left[\OO_K^\times : \UU_{\td\OO',\rS}(\OO')\right] 
= 
\begin{cases}
\left[\langle -1, \e_K \rangle : \langle \e_d^3 \rangle\right]
= \dfrac{2\log(\e_d^3)}{\log(\e_K)}
= \dfrac{6\log(\e_d)}{\Reg_K}, & \mbox{if } \td=d, \vspace{5pt}\\
\left[\langle -1, \e_K \rangle : \langle \e_d^6 \rangle\right]
= \dfrac{2\log(\e_d^6)}{\log(\e_K)}
= \dfrac{12\log(\e_d)}{\Reg_K}, & \mbox{if } \td=2d.
\end{cases}
\end{equation}
The quotient ideal appearing in \eqref{eq:rcls0} is $\colonideal{\td\OO'}{\OO_K} = \td\colonideal{\OO'}{\OO_K} = \td f'\OO_K$. Thus,
\begin{equation}\label{eq:rcls2}
\abs{\left(\OO_K\middle/\colonideal{\td\OO'}{\OO_K}\right)^\times}
= \abs{\left(\OO_K\middle/\td f'\OO_K\right)^\times}
= \varphi_K(\td f').
\end{equation} 
Now we must compute the size of $\UU_{\td\OO'}\!\left(\OO'\middle/\colonideal{\td\OO'}{\OO_K}\right) = \UU_{\td\OO'}\!\left(\OO'\middle/\td f'\OO_K\right)$. Write $\OO_K = \Z + \omega\Z$ and $\OO' = \Z + f'\omega\Z$. Thus, $\OO'/\td f'\OO_K = \frac{\Z+f'\omega\Z}{\td f'\Z+\td f'\omega\Z}$ and $\td\OO' = \td\Z + \td f'\omega\Z$, so the multiplicative group
\begin{align}
\UU_{\td\OO'}\!\left(\OO'\middle/\colonideal{\td\OO'}{\OO_K}\right) 
&= \left\{u \in \frac{\Z+f'\omega\Z}{\td f'\Z+\td f'\omega\Z} : u \equiv 1 \Mod{\td\Z+\td f'\omega\Z}\right\} \\
&\isom \left\{u \in \Z/\td f'\Z : u \equiv 1 \Mod{\td}\right\}.
\end{align}
If $\gcd(\td,f')=1$, then this group is isomorphic to $\left(\Z/f'\Z\right)^\times$ by the Chinese remainder theorem. Its order is then $\varphi(f')$. The only case in which $\gcd(\td,f') \neq 1$ is when $d \equiv 3 \Mod{9}$ and $3\div f'$, in which case $\gcd(\td,f') = 3$ and $\td=3d_1$ with $\gcd(d_1,3f')=1$. The Chinese remainder theorem gives a ring isomorphism $\pi : \Z/\td f'\Z \to \Z/d_1\Z \times \Z/3f'\Z$ given by $\pi(u) = (\pi_1(u), \pi_2(u)) = (u \Mod{d_1}, u \Mod{3f'})$. Under this isomorphism, the condition $u \equiv 1 \Mod{\td}$ is equivalent to $\pi_1(u)=1$ and $\pi_2(u) \equiv 1 \Mod{3}$; the latter condition is satisfied on an index $2$ subgroup of $(\Z/3f'\Z)^\times$. Thus, the order of $\UU_{\td\OO'}\!\left(\OO'\middle/\colonideal{\td\OO'}{\OO_K}\right)$ is $\varphi(3f')/2 = \frac{3}{2}\varphi(f')$. In summary,
\begin{equation}\label{eq:rcls3}
\abs{\UU_{\td\OO'}\!\left(\OO'\middle/\colonideal{\td\OO'}{\OO_K}\right)}
= 
\begin{cases}
\frac{3}{2}\varphi(f'), & \mbox{if } d \con 3 \Mod{9} \mbox{ and } 3\div f', \\
\varphi(f'), & \mbox{otherwise}.
\end{cases}
\end{equation}
This proves (1). 

We now prove (2). In the case $\td = d$, plugging in \eqref{eq:rcls1}, \eqref{eq:rcls2}, and \eqref{eq:rcls3} to \eqref{eq:rcls0} yields \eqref{eq:rclsize}. In the case $\td = 2d$ (with $d$ even), we have $\varphi_K(\td f') = 4 \varphi_K(df')$, and again using plugging in \eqref{eq:rcls1}, \eqref{eq:rcls2}, and \eqref{eq:rcls3} to \eqref{eq:rcls0} and comparing to \eqref{eq:rclsize}, we see that $\abs{\Cl_{2d\OO',\rS}(\OO')} = 2\abs{\Cl_{d\OO',\rS}(\OO')}$.
\end{proof}

\section{Equivalence of finiteness and algebraicity of SICs}\label{sec:C1}

In this appendix, we prove that the conjecture that ``there are finitely many Weyl--Heisenberg SICs in dimension $d$'' is equivalent to the statement that ``every Weyl--Heisenberg SICs in dimension $d$ has an algebraic fiducial vector.'' This equivalence is a special case of a result in real algebraic geometry, \Cref{thm:realalgebraicdim0} below, which we deduce from a version of the Tarski--Seidenberg theorem within the formalism adopted by the lecture notes of Schweighofer \cite{schweighofer}. 

We recall a few definitions from algebraic geometry and from earlier in this paper before proving the key result. For a subfield $K$ of $\C$ and an ideal $I \subseteq K[x_1, \ldots, x_n]$, the \textit{affine algebraic set} $X=V(I)$ in $\C^n$ is the zero set in $\C^n$ of all polynomials in $I$. For every field $L$ with $K \subseteq L \subseteq \C$, the set of \textit{$L$-points} of $X$ is
\begin{equation}
X(L) := \{\mathbf{x} \in L^n : f(\mathbf{x})=0 \mbox{ for all } f \in I\}.
\end{equation}
Recall also that we have defined $\ol{\Q}$ to be the algebraic closure of $\Q$ within the complex numbers $\C$, so consequently $\ol{\Q} \cap \R$ is a degree two subfield of $\ol{\Q}$ satisfying $(\ol{\Q} \cap \R)(i) = \ol{\Q}$ (where $i=\sqrt{-1}$). The field $\Ralg :=\ol{\Q} \cap \R$ of real algebraic numbers is an example of a \textit{real closed field} (see \cite[Defn.~1.4.9]{schweighofer}), a field sharing many properties with the real numbers $\R$.

\begin{thm}\label{thm:realalgebraicdim0}
Let $I$ be an ideal in $\ol\Q[x_1, \ldots, x_n]$, and let $X = V(I)$ be the associated affine variety. As a set, $X(\C) \cap \R^n$ is finite if and only if $X(\C) \cap \R^n \subseteq \ol{\Q}^n$.
\end{thm}
\begin{proof}
Let $\Ralg = \ol\Q\cap\R$ and $S = \Ralg[x_1, \ldots, x_n]$. Let $\RR = \{\Ralg,\R\}$ and $\RR_N = \{(R,\mathbf{x}) : R \in \RR, \, \mathbf{x} \in R^N\}$ for any natural number $N$, following the notation of \cite[Defn.~and Rmk.~1.8.3]{schweighofer}.

Write $I = (f_1, \ldots, f_m)$ for generators $f_k \in S[i]$ (with $i=\sqrt{-1}$), which has $S[i]= \ol{\Q}[x_1, \ldots, x_n]$, and write each $f_k = g_k + h_k i$ for polynomials $g_k, h_k \in S$. Consider the $S$-ideal $\tilde{I} = (g_1, h_1, \ldots, g_m, h_m)$ and set $\tilde{X} = V(\tilde{I})$. Then $X(\C) \cap \R^n = \tilde{X}(\R)$ and $X(\C) \cap K^n = \tilde{X}(K)$.

For $1 \leq j \leq n$, let $\pi_j : \R^n \to R$ be the $j$-th coordinate map $\pi_j(\mathbf{x}) = x_j$. The field $\Ralg$ is a real closed field, as defined in \cite[Defn.~1.4.9]{schweighofer}; see \cite[Ex.~1.4.10 and Ex.~1.7.12]{schweighofer}. Consider the formal disjoint union of images
\begin{align}
&\left(\{\Ralg\} \times \pi_j(\tilde{X}(\Ralg))\right) \sqcup \left(\{\R\} \times \pi_j(\tilde{X}(\R))\right) \\
&= \left\{(R,y) \in \RR_1 : \exists \mathbf{x} \in R^n \text{ s.t.~} (g_1(\mathbf{x})=0 \,\&\, h_1(\mathbf{x})=0 \,\&\, \cdots \,\&\, g_m(\mathbf{x})=0 \,\&\, h_m(\mathbf{x})=0 \,\&\, y = \pi_j(\mathbf{x}))\right\}. 
\end{align}
We now apply real quantifier elimination as stated by Schweighofer \cite[Thm.~1.8.17]{schweighofer} (a version of the Tarski--Seidenberg theorem, within Schweighofer's formalism of classes of real closed fields). Applying that theorem $n$ times to eliminate each coordinate of $\mathbf{x}$ from the ``there exists'' statement, it follows that $\left(\{\Ralg\} \times \pi_j(\tilde{X}(\Ralg))\right) \sqcup \left(\{\R\} \times \pi_j(\tilde{X}(\R))\right)$ is a $(\Ralg,(\Ralg)_{\geq 0})$-semialgebraic class in the sense of \cite[Defn.~and Rmk.~1.8.3]{schweighofer}. That is,
\begin{equation}
\left(\{\Ralg\} \times \pi_j(\tilde{X}(\Ralg))\right) \sqcup \left(\{\R\} \times \pi_j(\tilde{X}(\R))\right)
= \left\{(R,y) \in \RR_1 : p_{j,1}(y) \geq 0 \,\&\, \cdots \,\&\, p_{j,r_j}(y) \geq 0\right\}
\end{equation}
for polynomials $p_{j,k} \in \Ralg[y]$. Every polynomial in $\Ralg[y]$ factors as a product of linear and irreducible quadratic factors (with the quadratic factors having no real roots), from which one sees that in fact this set is a ``union of intervals''
\begin{align}
&\left(\{\Ralg \} \times \pi_j(\tilde{X}(\Ralg))\right) \sqcup \left(\{\R\} \times \pi_j(\tilde{X}(\R))\right) \\
&= \left\{(R,y) \in \RR_1 : a_{j,1} \leq y \leq b_{j,1} \mbox{ or } \cdots \mbox{ or } a_{j,t_j} \leq y \leq b_{j,t_j} \right\}
\end{align}
with $a_{j,k}, b_{j,k} \in \Ralg$ satisfying $a_{j,k} \leq b_{j,k}$.

We thus have a dichotomy:
\begin{enumerate}
\item
If there exists any pair $(j,k)$ with a strict inequality $a_{j,k} < b_{j,k}$, then $\pi_j(\tilde{X}(\R))$ is an uncountably infinite set, so the set $X(\C) \cap \R^n = \tilde{X}(\R)$ is both infinite and contains points with transcendental coordinates. 
\item
If one has $a_{j,k} = b_{j,k}$ for all pairs $(j,k)$, then each $\pi_j(\tilde{X}(\R)) = \pi_j(\tilde{X}(F)) = \{a_{j,1}, \ldots a_{j,t_j}\} \subset \Ralg$, so the set $X(\C) \cap \R^n = \tilde{X}(\R)$ is a finite set of points with all coordinates algebraic. \qedhere
\end{enumerate}
\end{proof}

\begin{lem}\label{lem:whsicvariety}
For subfields $L \subseteq \C$, regard $L^{2d^2}$ as a product of two copies of the set of $d \times d$ matrices $\Mat_{d \times d}(L)$. For $\p\in (\Z/d\Z)^2$, set $D_\p = X^{p_1}Z^{p_2}$ (where $X$ and $Z$ are the generators of the Weyl--Heisenberg group $\WH(d)$). Let $W_d$ be the affine algebraic variety defined for $L \subseteq \Q(i, \zeta_d)$ by
\begin{equation}
W_d(L) = \left\{(\Phi,\Psi) \in L^{2d^2} \, : \, 
\begin{array}{c}
(\Phi + i\Psi)^2 = \Phi + i\Psi,\,
\Tr(\Phi + i\Psi) = 1,\, 
\Phi^\top - i\Psi^\top = \Phi + i\Psi, \text{ and} \\
\Tr((\Phi + i\Psi)D_\p)\Tr((\Phi + i\Psi) D_\p^{-1})=\frac{1}{d+1} \text{ for } \p\in (\Z/d\Z)^2 \setminus \{(0,0)\}
\end{array}
\right\}.
\end{equation}
Then the linear map sending $(\Phi,\Psi) \mapsto \Pi = \Phi + i\Psi$ defines a bijection between $W_d(\C) \cap \R^{2d^2}$ and the set of Weyl--Heisenberg SIC fiducial projections in dimension $d$.
\end{lem}
\begin{proof}
This is a direct consequence of the definition of a Weyl--Heisenberg SIC (\Cref{def:WH-SIC}; see also \Cref{sec:11}). Specifically, taking $\Pi = \Phi + i\Psi$, we reason as follows.
\begin{itemize}
\item[(1)] The conditions $(\Phi + i\Psi)^2=\Phi + i\Psi$ and $\Tr(\Phi + i\Psi)=1$ are equivalent to the statement that $\Pi$ is a rank one projection matrix, that is, $\Pi = \bv\bw^\top$ for column vectors $\bv,\bw$ satisfying $\bw^\top\bv=1$.
\item[(2)] Assume (1). The condition that $\Phi^\top - i\Psi^\top = \Phi + i\Psi$ is equivalent to the condition that $\Pi$ is Hermitian, that is that $\bw = \ol{\bv}$.
\item[(3)] Assume (1) and (2). One has $\Tr(\Pi D_\p) = \Tr(\bv\bv^\ct D_\p) = \Tr(\bv^\ct D_\p\bv) = \bv^\ct D_\p\bv = \langle \bv, D_\p\bv \rangle$ and similarly $\Tr(\Pi D_\p^{-1}) = \langle \bv, D_\p^{-1}\bv \rangle = \langle D_\p\bv, \bv \rangle = \ol{\langle \bv, D_\p\bv \rangle}$. The condition that $\Tr((\Phi + i\Psi)D_\p)\Tr((\Phi + i\Psi) D_\p^{-1})=\frac{1}{d+1} \text{ for } \p\in (\Z/d\Z)^2 \setminus \{(0,0)\}$ is thus equivalent to the condition that $\abs{\langle \bv, D_\p\bv \rangle}^2 = \frac{1}{d+1}$ for $\p\in (\Z/d\Z)^2 \setminus \{(0,0)\}$, from which it follows that $\abs{\langle D_\p\bv, D_\q\bv \rangle}^2 = \abs{\langle \bv, D_{\q-\p}\bv \rangle}^2 = \frac{1}{d+1}$ for $\p \neq \q$, that is, the lines $\{\C D_\p\bv : \p \in (\Z/d\Z)^2\}$ are equiangular. 
\end{itemize}
Therefore, the condition that $\Pi = \Phi+i\Psi$ with $(\Phi,\Psi) \in W_d(\R)$ is equivalent to the condition that $\Pi$ is the fiducial projection of a Weyl--Heisenberg SIC.
\end{proof}

\begin{prop}\label{prop:finalgequiv}
Let $d \in \N$. The following statements are equivalent.
\begin{itemize}
\item[(1)] The set of Weyl--Heisenberg line-SICs in dimension $d$ is finite.
\item[(2)] Every fiducial vector of a Weyl--Heisenberg line-SICs in dimension $d$ can be scaled to have all entries in $\ol{\Q}$.
\end{itemize}
\end{prop}
\begin{proof}
By \Cref{lem:whsicvariety}, the set of Weyl--Heisenberg SIC fiducial projections can be identified with the real points $W_d(\C) \cap \R^{2d^2}$ of a complex affine algebraic variety defined as $W_d = V(I_d)$ for an ideal $I_d$ of $\ol{\Q}[x_1, \ldots, x_{2d^2}]$. 

Therefore, by \Cref{thm:realalgebraicdim0}, the set of fiducial projections of Weyl--Heisenberg SICs in dimension $d$ is finite if and only if every fiducial projection of a Weyl--Heisenberg SIC in dimension $d$ has algebraic entries. The statement of the proposition then follows using \Cref{lem:alg-SIC}(3).
\end{proof}

\bibliographystyle{abbrv}
\bibliography{references}

\end{document}